\documentclass[11pt,a4paper]{article}
\usepackage{a4wide,amsfonts,amsmath,latexsym,amssymb,euscript,eufrak,graphicx,units,mathrsfs}

\usepackage{stmaryrd}
\usepackage{color}
\usepackage{float}
\newfloat{figure}{H}{lof}
\floatname{figure}{\figurename}


\DeclareMathAlphabet{\eufrak}{U}{}{}{}  
\SetMathAlphabet\eufrak{normal}{U}{euf}{m}{n}
\SetMathAlphabet\eufrak{bold}{U}{euf}{b}{n}

\usepackage{amsmath, amsthm, amsfonts, amssymb}

\numberwithin{equation}{section}

\newcommand{\diff}[1]{\operatorname{d}\ifthenelse{\equal{#1}{}}{\,}{#1}}
\def\real{{\mathord{{\rm I\kern-2.8pt R}}}}        
\def\inte{{\mathord{{\rm I\kern-2.8pt N}}}}
\def\PP{{\mathord{{\rm I\kern-2.8pt P}}}}

\def\real{{\mathord{\mathbb R}}}

\def\inte{{\mathord{\mathbb N}}}

\def\R{\right}

\def\HH{\EuFrak H}



\newcommand{\e}{\varepsilon}

\def\R{\mathbb{R}}

\def\E{\mathbb{E}}
\def\Ex{\mathbb{E}}

\def\real{\mathbb{R}}
\newcommand{\abs}[1]{\left\vert #1 \right\rvert}
\newcommand{\norm}[1]{\left\lVert #1 \right\rVert}

\newtheorem{prop}{Proposition}[section]
\newtheorem{proposition}{Proposition}[section]

\newtheorem{lemma}[prop]{Lemma}

\newtheorem{theorem}[prop]{Theorem}

%


\textwidth15.3cm \textheight21.5cm \oddsidemargin0.5cm
\evensidemargin0.5cm \topmargin1cm \headheight0cm \headsep0cm
\baselineskip1in
\parindent0.2in


\allowdisplaybreaks

\begin{document}

\begin{center}
{\large \textbf{Continuous Breuer-Major theorem: tightness and non-stationarity}}\\[0pt]
~\\[0pt]
Simon Campese, Ivan Nourdin
\footnote{%
Universit\'e du Luxembourg, Maison du Nombre, 6 avenue de la Fonte, 
L-4364 Esch-sur-Alzette, Grand Duchy of Luxembourg.
{\tt \{simon.campese,ivan.nourdin\}@uni.lu}}
and David Nualart  
\footnote{%
Department of Mathematics, University of Kansas, Lawrence, KS 66045, USA.
{\tt nualart@ku.edu}  }
 \footnote{David Nualart was supported by the NSF grant  DMS 1512891}
\\[0pt]
{\it Universit\'e du Luxembourg and University of Kansas}\\
~\\[0pt]
\end{center}

{\small \noindent \textbf{Abstract:} 
Let $Y=(Y(t))_{t\geq0}$ be a zero-mean Gaussian stationary process with covariance function 
$\rho:\R\to\R$ satisfying $\rho(0)=1$. 
Let $f:\R\to\R$ be a square-integrable function with respect to the standard Gaussian measure, and suppose the Hermite rank of $f$ is $d\geq 1$. If $\int_\R |\rho(s)|^dds<\infty$, then the celebrated 
Breuer-Major theorem (in its continuous version) asserts that
the finite-dimensional distributions of
$Z_\e:=\sqrt{\e}\int_0^{\cdot/\e}f(Y(s))ds$ converge to those of $\sigma W$ as $\e\to 0$, where $W$ is a standard Brownian motion and $\sigma$ is some explicit constant.
Since its first appearance in 1983, this theorem
has become a crucial probabilistic tool in different areas, for instance in signal processing or in statistical inference for fractional Gaussian processes.
 
The goal of this paper is twofold. Firstly, we investigate the tightness in the Breuer-Major theorem. Surprisingly, this problem did not receive a lot of attention until now, and the best available condition due to Ben Hariz \cite{BenHariz}
is neither arguably very natural, nor easy-to-check in practice.
In contrast, our condition very simple, as it only requires that $|f|^p$ must be integrable  with respect to the standard Gaussian measure for some $p$ \emph{strictly} bigger than 2. It is obtained by means of the Malliavin calculus, in particular Meyer inequalities.

Secondly, and motivated by a problem of geometrical nature, we extend the continuous Breuer-Major theorem to the notoriously difficult case of self-similar Gaussian processes which are \emph{not} necessarily stationary. An application to the fluctuations associated with the length process of a regularized version of the bifractional Browninan motion concludes the paper.
\normalsize
}

\section{Introduction and statement of the main results}
Let $Y=(Y(t))_{t\geq 0}$ be a zero-mean Gaussian stationary process, with covariance function 
$\E[Y(t) Y(s) ] =\rho(|t-s|)$ such that $\rho(0)=1$.  Let $\gamma =N(0,1)$ be the standard Gaussian measure on $\R$. Consider a function
$f \in L^2(\R,\gamma)$   of
Hermite rank $d \geq 1$, that is, $f$ has a
series expansion given by
\begin{equation}\label{hermite}
  f(x) = \sum_{q=d}^{\infty} c_q H_q(x), \,\,\, c_d\neq 0,
\end{equation}
where $H_q(x)$ is the $q$th Hermite polynomial.

It has become a central result in modern stochastic analysis that, under the condition $\int _\R  |\rho(s) |^d ds< \infty $, the \emph{finite-dimensional distributions} (f.d.d.) of the process
\begin{equation} \label{zeta}
Z_\e(t) := \sqrt{\e}  \int_0^{t/\e} f(Y(s))ds, \,\,  t\ge 0
\end{equation}
converge, as $\e$ tends to zero, to those of $\sigma W   $, where $W=(W(t))_{t\ge 0}$ is a standard Brownian motion
and  
\begin{equation} \label{sigma}
\sigma^2= \sum_{q=d} ^\infty c_q^2 q!  \int_\R \rho(s)^q  ds.
\end{equation}
(Observe that $|\rho(s)|=|\E[Y(s)Y(0)]|\leq \rho(0)=1$ by Cauchy-Schwarz, and thus $\sigma^2$ is
well defined under our integrabilility assumption on $\rho$ and the
square-integrability of $f$).
This is a continuous version of the celebrated Breuer-Major theorem proved in \cite {BM}, that can be found stated this way e.g. in the  paper  by Ben Hariz \cite{BenHariz}. 
We also refer the reader to \cite[Chapter 7]{IvanGioBook}, where a modern proof of the original discrete version\footnote{Note that the proof contained in \cite[Chapter 7]{IvanGioBook} can be easily  extended (mutatis mutandis) to cover the continuous framework as well.} of the Breuer-Major theorem is given, by means of the recent Malliavin-Stein approach.

The condition $\int _\R  |\rho(s) |^d ds< \infty $ turns out to be also necessary for the convergence of $Z_\e$ to $\sigma W$ in the sense of f.d.d., because $\sigma^2$ is not properly defined when  $\int _\R  |\rho(s) |^d ds= \infty $.
What about the \emph{functional convergence}, that is, convergence in law of $Z_\e$ to $\sigma W$ in  $C(\R_+)$ endowed with the uniform topology on compact sets?
First, let us note that Chambers and Slud \cite[page 328]{ChambersSlud} provide a counterexample of a zero-mean Gaussian stationary process $Y$ and a square-integrable function $f$ satisfying $Z_\e\Rightarrow \sigma W$ in the sense of f.d.d., but {\it not} in the functional sense; as a consequence, we see that the mere condition $\int _\R  |\rho(s) |^d ds< \infty $ 
does not imply tightness in general.

Before the present paper, the best sufficient condition ensuring tightness in the continous Breuer-Major theorem was due to Ben Hariz \cite{BenHariz}: more precisely, it is shown in \cite[Theorem 1]{BenHariz} that the functional convergence of $Z_\e$ to $\sigma W$ holds true  whenever either
\begin{equation}\label{bh1}
\mbox{there exists $R>1$
such that }\,
\sum_{q=d}^\infty \frac{|c_q|}{\sqrt{q!}}\left(\int_\R |\rho(s)|^q ds\right)^{1/2}\,R^q<\infty,
\end{equation}
or
\begin{equation}\label{bh2}
\mbox{the $c_q$ are all positive and $f\in L^4(\R,\gamma)$}.
\end{equation}
The two conditions (\ref{bh1})-(\ref{bh2}) proposed by Ben Hariz \cite{BenHariz} were obtained thanks to moment inequalities {\it \`a la} Rosenthal; they are neither very natural, nor easy-to-check. 

In the present paper, our first main objective is  to remedy the situation and provide a simple sufficient condition for the convergence $Z_\e\Rightarrow\sigma W$ to hold in  law in $C(\R_+)$ endowed with the uniform topology on compact sets. Surprisingly and compared to \cite{BenHariz}, our finding is that only a little more integrability of the function $f$ is enough.

\begin{theorem} \label{thm1}
Let $Y=(Y(t))_{t\geq 0}$ be a zero-mean Gaussian stationary  process with covariance function 
$\E[Y(t) Y(s) ] =\rho(|t-s|)$ such that $\rho(0)=1$. Consider a function  $f \in L^2(\R,\gamma)$ with expansion (\ref{hermite}) and Hermite rank $d\ge 1$. Suppose that  $\int _\R  |\rho(s) |^d ds< \infty $.  Then, if  $f \in L^p(\R,\gamma)$  for some $p>2$, 
the process $Z_\e$ defined in (\ref{zeta}) converges in   law in $C(\R_+)$ to $\sigma (W(t))_{t\ge 0}$, where $W$ is a Brownian motion and  $\sigma ^2$ is defined in (\ref{sigma}).
\end{theorem}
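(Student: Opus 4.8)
The plan is to obtain the functional convergence from the finite-dimensional convergence — which is the classical continuous Breuer--Major theorem recalled above — together with tightness of $\{Z_\e\}_{\e\in(0,1]}$ in $C([0,T])$ for every $T>0$. Since $Z_\e(0)=0$, Kolmogorov's criterion reduces tightness to the following: there exist $p>2$ and $C<\infty$, independent of $\e$, such that $\E[\,|Z_\e(t)-Z_\e(s)|^{p}\,]\le C\,|t-s|^{p/2}$ for all $0\le s\le t$ (the exponent $p/2$ being $>1$). By the stationarity of $Y$ and the substitution $u\mapsto u/\e$, the increment $Z_\e(t)-Z_\e(s)$ has the same law as $\sqrt{t-s}\,F_\tau$, where $\tau=(t-s)/\e$ and $F_\tau:=\tau^{-1/2}\int_0^\tau f(Y(u))\,du$; hence everything reduces to the $\tau$-uniform moment bound
\[
\sup_{\tau>0}\E\big[\,|F_\tau|^{p}\,\big]<\infty\qquad\text{for some }p>2\text{ with }f\in L^p(\R,\gamma).
\]
For $\tau\le1$ this follows at once from Jensen's inequality, $\E[|F_\tau|^p]\le\tau^{p/2}\|f\|_{L^p(\gamma)}^p\le\|f\|_{L^p(\gamma)}^p$ — a first use of the extra integrability — so the substance lies in the bound for large $\tau$. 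When $p=2$ this is the standard computation behind (\ref{sigma}): writing $Y(u)=W(h_u)$ with $h_u\in\mathcal H$, $\|h_u\|_{\mathcal H}=1$ and $\langle h_u,h_v\rangle_{\mathcal H}=\rho(|u-v|)$, one has $F_\tau=\sum_{q\ge d}I_q(g_{q,\tau})$ with $g_{q,\tau}=c_q\tau^{-1/2}\int_0^\tau h_u^{\otimes q}\,du$, and $\E[F_\tau^2]\le\big(\int_\R|\rho|^d\big)\,\|f\|_{L^2(\gamma)}^2$ uniformly in $\tau$ (using $|\rho|\le1$ and $q\ge d$).

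The core of the proof is to upgrade this uniform $L^2$-bound to a uniform $L^p$-bound, $p>2$, by Malliavin calculus, the key tool being the Meyer inequalities. Since $f$ has Hermite rank $d\ge1$ (so $\int_\R f\,d\gamma=0$), one may integrate by parts once: the function $g(x):=\sum_{q\ge d}c_qH_{q-1}(x)$, which can also be written $g(x)=\gamma(x)^{-1}\int_x^{\infty}f(y)\,\gamma(y)\,dy$ (with $\gamma(x)=(2\pi)^{-1/2}e^{-x^2/2}$), satisfies via the raising-operator identity $\delta(H_{q-1}(W(h))h)=H_q(W(h))$, $\|h\|_{\mathcal H}=1$, the relation $f(Y(u))=\delta(g(Y(u))\,h_u)$. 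Consequently $F_\tau=\delta(v_\tau)$, with $v_\tau=\tau^{-1/2}\int_0^\tau g(Y(u))\,h_u\,du=DG_\tau$ a gradient, $G_\tau=\tau^{-1/2}\int_0^\tau\phi(Y(u))\,du$ and $\phi=\sum_{q\ge d}(c_q/q)H_q$ (so also $F_\tau=-LG_\tau$). Controlling $\|F_\tau\|_{L^p}=\|\delta(v_\tau)\|_{L^p}$ through the $L^p$-continuity of the divergence (a Meyer inequality) brings in $v_\tau$ and $Dv_\tau$, i.e. Breuer--Major-type functionals of $g$ and $g'$. The decisive point — and exactly the place where the exponent $2$ appears — is that $g\in L^p(\R,\gamma)$ as soon as $f\in L^p(\R,\gamma)$ and $p>2$: by Hölder's inequality and $\int_\R f\,d\gamma=0$, $|g(x)|\,\gamma(x)\lesssim\|f\|_{L^p(\gamma)}\,\big(\gamma(\{|y|>|x|\})\big)^{1-1/p}$, so $\|g\|_{L^p(\gamma)}^p\lesssim\int_\R\gamma(x)^{1-p}\gamma(\{|y|>|x|\})^{p-1}\,dx$, and the Mills-ratio asymptotics $\gamma(\{y>t\})\sim\gamma(t)/t$ make the integrand of order $|x|^{1-p}$ at infinity, which is integrable precisely when $p>2$. (A similar but easier computation gives $g'\in L^2(\R,\gamma)$, since $\sum_{q\ge d}c_q^2(q-1)^2(q-2)!=\sum_{q\ge d}c_q^2\,\tfrac{q-1}{q}\,q!\le\|f\|_{L^2(\gamma)}^2$.)

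The principal obstacle is the \emph{uniformity in $\tau$}: crude bounds such as $\|v_\tau\|_{\mathcal H}\le\tau^{-1/2}\int_0^\tau|g(Y(u))|\,du$ scale like $\tau^{1/2}$ and are useless, for they discard the very cancellation that keeps $F_\tau$ of order one. The estimates must therefore be arranged to retain the $L^2$-type control above (uniform in $\tau$ thanks to $\int_\R|\rho|^d<\infty$) and to appeal to the extra integrability of $f$, hence of $g$, only for the genuinely "higher-order" part of the bound. Concretely, I would run the whole argument on the finite chaos truncations $F_\tau^{(M)}=\sum_{q=d}^M I_q(g_{q,\tau})$ — legitimate since $F_\tau$ itself need not belong to $\mathbb D^{1,2}$ — obtaining bounds uniform in $M$ and $\tau$, and then pass to the limit by Fatou's lemma; interpolation between the uniformly-bounded $L^2$ quantities and the $L^p$ ones should then suffice, Kolmogorov's criterion only demanding a moment bound for \emph{some} exponent strictly larger than $2$. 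The delicate bookkeeping is to check that the second-order term $Dv_\tau$, built from $g'$ (known only to lie in $L^2$), does not require more integrability than is available — once more, interpolating it against the uniform $L^2$-bound should do it.
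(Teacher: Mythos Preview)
Your overall architecture is the paper's own --- reduce to tightness, apply Kolmogorov, represent $f(Y(u))$ as a divergence, and invoke Meyer's inequalities --- but there is a genuine gap in the execution that makes the argument break down whenever the Hermite rank $d$ is at least~$2$.

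You integrate by parts only \emph{once}, writing $f(Y(u))=\delta(g(Y(u))h_u)$ with $g=-DL^{-1}f$, and then try to bound $\|F_\tau\|_{L^p}=\|\delta(v_\tau)\|_{L^p}$ via Meyer's inequality in terms of $\|v_\tau\|_{L^p(\Omega;\mathcal H)}$ and $\|Dv_\tau\|_{L^p(\Omega;\mathcal H^{\otimes 2})}$. But the only route to a $\tau$-uniform control of $\|v_\tau\|_{L^p(\Omega;\mathcal H)}$ is Minkowski/H\"older:
\[
\big\|\,\|v_\tau\|_{\mathcal H}^2\,\big\|_{L^{p/2}}
\le \|g\|_{L^p(\gamma)}^2\;\tau^{-1}\!\int_0^\tau\!\!\int_0^\tau |\rho(u-v)|\,du\,dv,
\]
and the last factor is bounded only if $\int_\R|\rho|<\infty$. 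The hypothesis, however, is merely $\int_\R|\rho|^d<\infty$, which is strictly weaker when $d\ge 2$: for fractional Gaussian noise with $H\in(\tfrac12,\,1-\tfrac1{2d})$ one has $|\rho(h)|\sim c\,|h|^{2H-2}$, so $\int|\rho|^d<\infty$ while $\int|\rho|=\infty$. For such~$\rho$ your bound on $\|v_\tau\|_{L^p(\Omega;\mathcal H)}$ grows like a positive power of~$\tau$, and interpolation against the uniform $L^2$ bound cannot salvage it --- Lyapunov's inequality still gives a divergent $L^q$ norm for every $q>2$. The same obstruction hits $\|Dv_\tau\|_{L^p(\Omega;\mathcal H^{\otimes 2})}$, whose Minkowski estimate involves $\int|\rho|^2$, again possibly infinite when $d\ge 3$.

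The missing idea, and the crux of the paper's proof, is to iterate the integration by parts $d$ times: write
\[
f(Y(u))=\delta^d\big(f_d(Y(u))\,h_u^{\otimes d}\big),\qquad f_d=(-DL^{-1})^d f=\sum_{q\ge d}c_q H_{q-d}.
\]
Meyer's inequality for $\delta^d$ then produces terms $\|D^k(f_d(Y(\cdot))h_{\cdot}^{\otimes d})\|_{L^p(\Omega;\mathcal H^{\otimes(d+k)})}$ for $k=0,\dots,d$, and the corresponding Minkowski bound involves $\tau^{-1}\!\int_0^\tau\!\int_0^\tau|\rho(u-v)|^{d+k}\,du\,dv\le\int_\R|\rho|^d<\infty$, uniformly in~$\tau$. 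The required inputs $\|f_d^{(k)}\|_{L^p(\gamma)}\le c_p\|f\|_{L^p(\gamma)}$ follow directly from the $L^p$-equivalence of $D$ and $(-L)^{1/2}$ (Meyer again); this abstract route replaces your Mills-ratio computation, works for the $d$-fold shift $f_d$ and all its derivatives at once, and in particular dispels your worry that the derivative terms might only lie in~$L^2$.
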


The proof of Theorem \ref{thm1}
is based on the  application of the techniques of Malliavin calculus and it has been inspired by  the recent work
of  Jaramillo and Nualart   \cite{JN} on the asymptotic behavior of the renormalized self-intersection local time of the fractional Brownian motion. The main idea  to prove tightness  is to  use the representation of the random variable $Z_\e(t)$ as 
\[
Z_\e(t)= \delta^d (-D L^{-1})^d Z_\e(t),
\]
where $\delta$, $D$ and $L$ are the basic operators in Malliavin calculus and then apply Meyer inequalities to upper bound
   $\E[ |Z_\e (t)- Z_\e(s) |^p]$ by $C |t-s| ^{p/2}$, where $p$ is the exponent appearing in  Theorem \ref{thm1}.

\medskip
Then, as an application of the previous result we aim to solve the following problem.
Let  $X= (X(t))_{t\ge 0}$ be a
   self-similar continuous Gaussian centered  process, and assume moreover that almost no path of $X$ is rectifiable, that is, the length of $X$ on any compact interval is \emph{infinite}: in symbols, $\mathcal{L}(X;[0,t])=+\infty$ for all $t>0$. Examples of such processes include
the fractional Brownian motion and relatives, such as the bifractional Brownian motion and  the subfractional Brownian motion.
Consider the $C^1$-regularization $X^\e$ of $X$  given by
\begin{equation}
X^\e(t)=\frac{1}{\e}\int_t^{t+\e}X(u)du.
\label{beps}
\end{equation}
Can we compute at which speed the length of $X^\e$ on $[0,t]$ explodes?
Stated in a different way, 
what is the asymptotic behaviour of the family of processes indexed by $\e$:
\begin{equation}\label{quest}
\mathcal{L}(X^\e;[0,t])=
\int_0^t \left|\dot{X}^\e(u)\right|du, \quad t\geq 0,
\end{equation}
when $\e\to 0$?

Let us first take a look at the simplest case, that is, where $X=B$ is a fractional Brownian motion (fBm)
of index $H\in(0,1)$. We recall that the fBm $B=(B(t))_{t\ge 0}$ is a centered Gaussian process with covariance
\begin{equation}
\label{fbm}
\E[B(t) B(s)] =  \frac 12 \big( t^{2H} + s^{2H} -|t-s|^{2H}\big).
\end{equation}
Making a change of variable and using  the self-similarity  of $B$ we observe that
\begin{eqnarray*}
\mathcal{L}(B^\e;[0,t]) &=&\e^{-1}\int_0^t |B(u+\e)-B(u)|du
=\int_0^{ t/\e} |B(\e(v+1))-B(\e v)|dv\\
&\overset{\rm law}{=}&\e^{H}\int_0^{ t/\e}|B(v+1)-B(v)|dv=:  \e ^{H-\frac 12} Z_\e(t)\quad\mbox{(as a process in $t$)},
\end{eqnarray*}
so that we are left to study the asymptotic behavior of $Z_\e$ as $\e\to 0$.
Since the fractional Gaussian noise $(B(t+1)-B(t))_{t \geq 0}$ is \emph{stationary}, 
to conclude it actually suffices to  apply Theorem \ref{thm1} to the process $Y(t)=B(t+1)-B(t)$.
 Indeed,  if we choose for $f$  the function $f(x)=\abs{x} - \sqrt{\frac{2}{\pi}}$ of Hermite rank 2 (indeed,
$f
= \sum_{q=2}^{\infty} \frac{1}{q! (2q-1)} H_{2q},
$
see Section \ref{s-12}), we have $f\in L^p(\R,\gamma)$ for any $p\ge 2$. 
 In this way we obtain   the following result:
\begin{itemize}
\item[(i)]  If $H< \frac34$, then
\begin{equation}\label{conv1}
\e^{\frac12-H}\left(\mathcal{L}(B^\e;[0,t]) -t\e^{H-1}\sqrt{\frac{2}{\pi}}\right)_{t\geq 0}\Rightarrow \sigma_H\,(W(t))_{t\geq 0}\quad\mbox{in $C(\R_+)$ as $\e\to 0$},
\end{equation}
\end{itemize}
 with $W$ a standard Brownian motion and
$
\sigma^2_H= \sum_{q=2}^{\infty} \frac{1}{q!  (2q-1)^2}  \int_{-\infty}^{\infty}  a_{2H}(h)^q dh,
$
  where, for any $\alpha >0$, 
  \begin{equation}
     \label{eq:6}
     a_{\alpha}(h)  =\frac{1}{2}
     \left(
       \abs{h-1}^{\alpha} + \abs{h+1}^{\alpha} - 2 \abs{h}^{\alpha}
     \right),  \,\,\, h\in \R.
   \end{equation}
   
   Furthermore,  in the case $H\ge \frac 34$, it is known that (tightness in the case $H=\frac 34$ can be proved by the same techniques as in Theorem \ref{thm1} and follows from Theorem \ref{thm:5} below):
\begin{itemize}
\item[(ii)] If $H=\frac34$, then 
\begin{equation}\label{conv2}
\frac{\e^{-\frac14}}{\sqrt{|\log \e|}}\left(\mathcal{L}(B^\e;[0,t]) -t\e^{-\frac14}\sqrt{\frac{2}{\pi}}\right)_{t\geq 0}\Rightarrow  \frac 18\,(W(t))_{t\geq 0}\quad\mbox{in $C(\R_+)$ as $\e\to 0$};
\end{equation}
\item[(iii)] If $H>\frac34$, then 
\begin{equation}\label{conv3}
\e^{H-1}\left(\mathcal{L}(B^\e;[0,t]) -t\e^{H-1}\sqrt{\frac{2}{\pi}}\right)\Rightarrow \mbox{`Rosenblatt process'}
\quad\mbox{in $C(\R_+)$ as $\e\to 0$}.
\end{equation}
\end{itemize}

The asymptotic behavior of (\ref{quest}) is therefore completely understood
when $X=B$ is a fBm.
But are the previous convergences (\ref{conv1}), (\ref{conv2}) and (\ref{conv3}) 
still true for any self-similar continuous  Gaussian centered process?  In this paper our second main objective is to answer this question, which is particularly difficult because of the lack of stationarity of the increments of $X$ in such a generality.

To have a better idea of what may happen, let us now consider the case where
$X=\widetilde{B}$ is the bifractional Brownian motion with indices $H\in(0,1)$ and $K\in (0,1]$, meaning that the covariance of $\widetilde{B}$ is given by
\begin{equation}
\label{bifbm}
\E[\widetilde{B}(t)\widetilde{B}(s)] = 2^{-K}\big((t^{2H}+s^{2H})^K-|t-s|^{2HK}\big).
\end{equation}
When $K=1$, $\widetilde{B}$ is nothing but a fBm with index $H$. In general, 
we can think of $\widetilde{B}$ as a perturbation of a fBm
$B$
with index $HK$. Indeed,
set $Z(t) =\int_0^\infty (1-e^{-\theta t})\theta^{-\frac{1+K}{2}}dW(\theta)$, $t\geq 0$, where $W$ stands for a standard Brownian motion independent of $\widetilde{B}$.
As shown by Lei and Nualart \cite{LN}, the process $Z$ has
absolutely continuous trajectories; moreover, with $Y(t)=Z(t^{2H})$,
\begin{equation}
\label{leinualart}
\left(\sqrt{\frac{2^{-K}K}{\Gamma(1-K)}}
\,Y(t)+ \widetilde{B}(t)\right)_{t\geq 0} \overset{\rm law}{=} \left(2^{\frac{1-K}{2}} B(t)\right)_{t\geq 0}.
\end{equation}
Recall definition (\ref{beps}). We immediately deduce from (\ref{leinualart}) that, for any $\e>0$,
\begin{equation}
\label{leinualart2}
\left(\sqrt{\frac{2^{-K}K}{\Gamma(1-K)}}
\,Y^\e(t)+ \widetilde{B}^\e(t)\right)_{t\geq 0} \overset{\rm law}{=} \left(2^{\frac{1-K}{2}} B^\e(t)\right)_{t\geq 0}.
\end{equation}
We can thus write, assuming that $Y$ and $\widetilde{B}$ are independent and defined on the same probability space, and with $B:=2^{\frac{K-1}{2}}(\sqrt{\frac{2^{-K}K}{\Gamma(1-K)}}
Y+\widetilde{B})$:
\begin{eqnarray*}
&&\mathcal{L}(\widetilde{B}^\e;[0,t]) - 2^{\frac{1-K}{2}} t \e^{HK-1} \sqrt{\frac{2}{\pi}}
=\e^{-1}\int_0^t |\widetilde{B}(u+\e)-\widetilde{B}(u)|du - 2^{\frac{1-K}{2}} t \e^{HK-1} \sqrt{\frac{2}{\pi}}\\
&=&2^{\frac{1-K}{2}}\int_0^t \left\{ 
\left|
\frac{B(u+\e)-B(u)}{\e}
\right|-\e^{HK-1}\sqrt{\frac{2}{\pi}}
\right\} du\\
&&  + \int_0^t \Bigg\{
\left|2^{\frac{1-K}{2}}\frac{B(u+\e)-B(u)}{\e}-\sqrt{\frac{2^{-K}K}{\Gamma(1-K)}}
 \frac{Y(u+\e)-Y(u)}{\e}\right|  \\
 && \qquad  - 2^{\frac{1-K}{2}}\left|\frac{B(u+\e)-B(u)}{\e}\right|
\Bigg\}du\\
&=:&a_\e(t)+b_\e(t).
\end{eqnarray*}
When $HK<\frac12$,  we deduce from (\ref{conv1}) that 
\begin{equation}\label{ae}
\e^{\frac12-HK}a_\e \Rightarrow 2^{\frac{1-K}{2}}\sigma_{HK}W,
\end{equation} 
whereas 
\begin{equation}\label{contrast}
|b_\e(t)|\leq \sqrt{\frac{2^{-K}K}{\Gamma(1-K)}}
\int_0^t \left|\frac{Y(u+\e)-Y(u)}{\e}\right| du\to \sqrt{\frac{2^{-K}K}{\Gamma(1-K)}}
\int_0^t
|\dot{Y}(u)|du.
\end{equation}
By combining (\ref{ae}) and (\ref{contrast}) together  we eventually obtain that 
\begin{equation}\label{conv1bis}
\e^{\frac12-HK}\left(\mathcal{L}(\widetilde{B}^\e;[0,t]) - 2^{\frac{1-K}{2}} t \e^{HK-1} \sqrt{\frac2\pi}\right)_{t\geq 0}  \Rightarrow \left(2^{\frac{1-K}{2}}\sigma_{HK}W(t)\right)_{t\geq 0},
\end{equation}
which is analogous to (\ref{conv1}).
The situation where $HK\geq \frac12$ looks more complicated at first glance because to conclude we not only need
an upper bound as the one given by (\ref{contrast}), but we  have
to understand the \emph{exact} behavior of $b_\e$ when $\e\to 0$.
For all $t>0$, one has almost surely that $\e^{-1}(Y(t+\e)-Y(t))\to \dot{Y}(t)= 2Ht^{2H-1}\dot{X}(t^{2H})$,
whereas $\e^{-1}|B(t+\e)-B(t)|$ diverges to $+\infty$. Hence, at a \emph{heuristic} level, one has that
\begin{multline*}
\left|2^{\frac{1-K}{2}}\frac{B(t+\e)-B(t)}{\e}-\sqrt{\frac{2^{-K}K}{\Gamma(1-K)}}
 \frac{Y(t+\e)-Y(t)}{\e}\right| - 2^{\frac{1-K}{2}}\left|\frac{B(t+\e)-B(t)}{\e}\right|
\\
\overset{\rm a.s.}{\to}
-\sqrt{\frac{2^{-K}K}{\Gamma(1-K)}}\,
\dot{Y}(t)\times \lim_{\e\to 0} {\rm sign}(B(t+\e)-B(t))=:A(t).
\end{multline*}
Although the previous reasoning is only heuristic (because  $\lim_{\e\to 0} {\rm sign}(B(t+\e)-B(t))$ does not exist), it seems to indicate that
$b_\e$ may converge almost surely as $\e\to 0$ \emph{without} further renormalization, to a random variable of the form
$\int_0^t A(u)du$.
If such a claim were  true, we would deduce from it that
\begin{multline}\label{claim}
\left(\mathcal{L}(\widetilde{B}^\e;[0,t])  - 2^{\frac{1-K}{2}} t \e^{HK-1}
  \sqrt{\frac2\pi}\right)_{t\geq0}
\\ \to
\begin{cases}
  2^{\frac{1-K}{2}}\sigma_{1/2} W + \int_0^\cdot A(u)du
  & \text{in law if $HK=\frac12$,}
\\
\int_0^\cdot A(u)du
& \text{a.s. if $HK > \frac{1}{2}$},
\end{cases}
\end{multline}
with $W$ a Brownian motion independent of
  $\widetilde{B}$,
a statement which would be very different compared to  (\ref{conv1}), (\ref{conv2}) and (\ref{conv3}).

Our first attempt to study the asymptotic behavior of (\ref{beps}) in the case where $X=\widetilde{B}$ is a bifractional Brownian motion  was 
to 
check whether the reasoning leading to (\ref{claim}) can be made rigorous. 
But we failed, to then realize that the claim (\ref{claim}) is actually wrong. What is correct is that convergences (\ref{conv1}), (\ref{conv2}) and (\ref{conv3}) 
continue to be valid for a wide class of self-similar centered Gaussian processes, containing not only the bifractional Brownian motion, but other perturbations of the fractional Brownian motion. 

With this application in mind,  the second goal of our paper is to generalize Theorem \ref{thm1}  to    self-similar Gaussian processes which are \emph{not} necessarily stationary. We will also  consider    the case  where the integral  $\int _\R  |\rho(s) |^d ds $ is infinite but the limit is still Gaussian (in such a critical case, a logarithmic factor is required), or when a non-Gaussian limit appears.

 Let us first present the class of processes under consideration.
Assume that 
 $X=(X(t))_{t \geq 0}$ is a centered Gaussian process that is self-similar of order 
$\beta \in (0,1)$. We define 
$\phi : [1,\infty) \to\mathbb{R}$ by
$
  \phi(x) = \Ex[X(1) X(x)]
$,
so that, for $0 < s \leq t$, we have
\begin{equation}
  \label{eq:27}
  \Ex[X(s) X(t)] = s^{2\beta} \Ex\left[X(1) X\left(\frac{t}{s}\right)\right]= s^{2\beta} \phi \left( \frac{t}{s} \right).
\end{equation}
Therefore $\phi$ characterizes the covariance function of $X$. 
Moreover, 
 let us also assume the following two hypotheses on $\phi$,
which were first introduced and considered in \cite{HN}:
\begin{enumerate}
\item[(H.1)] There exists $\alpha \in (0,2\beta]$ such that $\phi$ has the form
  \begin{equation*}
    \phi(x) = -\lambda (x-1)^{\alpha} + \psi(x),
  \end{equation*}
  where $\lambda>0$ and $\psi(x)$ is twice-differentiable on an open set containing
  $[1,\infty)$ and there exists a constant $C \geq 0 $ such that, for any $x \in (1,\infty)$,
  \begin{enumerate}
  \item $\abs{\psi'(x)}  \leq C x^{\alpha-1}$
  \item $\abs{\psi''(x)} \leq C x^{-1} (x-1)^{\alpha-1}$
  \item $\psi'(1) = \beta \psi(1)$ when $\alpha \geq 1$.
  \end{enumerate}
\item[(H.2)] There are constants $C > 0$, $c>1$ and $1 < \nu \leq 2$ such that, for all $x \geq
  c$,  
\begin{enumerate}
  \item[(d)] $\abs{\phi'(x)} \leq
    \begin{cases}
      C x^{-\nu} &\qquad \text{if $\alpha<1$,} \\
      C x^{\alpha-2} &\qquad \text{if $\alpha \geq 1$}.
    \end{cases}
$
\item[(e)] $\abs{\phi''(x)} \leq
      \begin{cases}
      C x^{-\nu-1} &\qquad \text{if $\alpha<1$,} \\
      C x^{\alpha-3} &\qquad \text{if $\alpha \geq 1$}.
    \end{cases}$
\end{enumerate}
\end{enumerate}

We refer to \cite[Section 4]{HN}
for explicit examples of processes $X$ satisfying (H.1) and (H.2), among them the bifractional Brownian motion (\cite[Section 4.1]{HN}) and the subfractional Brownian motion (\cite[Section 4.2]{HN}).

Now, for $\e >0$ and $t \geq 0$, let us define
\begin{equation}
  \label{eq:1}
  \Delta_\e X(t) = X(t+\e) - X(t) \qquad \text{and} \qquad
  Y_{\e}(t) = \frac{\Delta_\e X(t)}{\norm{\Delta_\e X(t)}_{L^2(\Omega)}}.
\end{equation}
Finally, define the family of stochastic processes $\widetilde{F}_\e= (\widetilde{F}_\e(t))_{t \geq 0}$ by
  \begin{equation}
    \label{eq:2}
    \widetilde{F}_\e (t)= \frac1{\sqrt{\e}} \int_0^{t} f(Y_\e(u))du.
  \end{equation}
  By the  self-similarity property of $X$,  the process $\widetilde{F}_\e$ has the same law as $F_\e$, where
  \begin{equation}
    \label{eq:2a}
    F_\e (t)= {\sqrt{\e}}   \int_0^{ t/\e} f(Y_1(u))du.
  \end{equation}

The  second  contribution of this paper is the following theorem, which is an
extension of Theorem \ref{thm1} (central case) and the main results of Taqqu's
seminal paper \cite{T} (non-central case) to a situation where the underlying Gaussian process $X$  does  \emph{not}  need to have stationary increments. 
This lack of stationarity is actually the main difficulty we will have to cope with.   

\begin{theorem}
  \label{thm:5}
  In the above setting, assume that (H.1) and (H.2) are in order for 
a centered Gaussian process $X=(X(t))_{t \ge 0}$, self-similar of order 
$\beta \in (0,1)$, and whose covariance function is given by (\ref{eq:27}).
Let $f\in L^2(\R,\gamma)$  a function with Hermite rank $d\ge 1$ and expansion (\ref{hermite}) and  let  $F_\e$ be defined in (\ref{eq:2a}).
 Then the following is true as $\e\to 0$.
  \begin{enumerate}
  \item If $\alpha < 2-\frac{1}{d}$, then the finite-dimensional  distributions of the family $\left\{ F_\e :\, \e>0 
    \right\}$ converges in law  to those of a
    Brownian motion with variance given by (\ref{sigma}) with  $\rho(h) = a_{\alpha}(h)$ defined in (\ref{eq:6}).
  \item If $\alpha = 2-\frac{1}{d}$, then the finite-dimensional  distributions of the family
    $\left\{ F_{\e}/\sqrt{|\log \e|} :\, \e>0 \right\}$ converges in law  to those of a
    Brownian motion  with variance 
    \begin{equation}
      \label{eq:69}
     \sigma^2_{1- \frac 2d}=  c_d^2\, d! 
      \left(1 + (\beta - \frac{\alpha}2)d\right)
       \left( 1 - \frac{1}{2d} \right)^d \left( 1 - \frac{1}{d} \right)^d.
    \end{equation}
    \end{enumerate}
    Moreover, if $f\in L^p(\R ,\gamma)$ for some $p>2$, then the convergences in  1. and 2. hold in law in $C(\R_+)$.
\end{theorem}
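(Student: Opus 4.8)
The statement has two parts: convergence of the finite-dimensional distributions (with two regimes), and tightness in $C(\R_+)$ under $f\in L^p(\R,\gamma)$, $p>2$. For tightness I would mimic the proof of Theorem \ref{thm1} almost verbatim, so the real work is in the finite-dimensional convergence. Throughout I expand $f=\sum_{q\ge d}c_qH_q$ and write, using (\ref{eq:2a}),
\[
F_\e(t)=\sum_{q\ge d}c_q\,\sqrt{\e}\int_0^{t/\e}H_q(Y_1(u))\,du=:\sum_{q\ge d}F_\e^{(q)}(t),
\]
where $F_\e^{(q)}(t)=I_q\bigl(h_{\e,t}^{(q)}\bigr)$ lives in the $q$-th Wiener chaos of the isonormal process generated by $X$, the relevant covariance being $r(u,v):=\E[Y_1(u)Y_1(v)]$. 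Everything rests on understanding $r(u,v)$ for $u,v$ large, since the range of integration is $[0,t/\e]$. Using self-similarity and (H.1), one checks that the a priori leading terms $u^{2\beta}$ and, when $\alpha\ge1$, $u^{2\beta-1}$ in $\E[(\Delta_1 X(u))^2]$ cancel — the former because $\phi(1)=\psi(1)$, the latter because of (H.1)(c) — so that $\E[(\Delta_1 X(u))^2]=2\lambda\,u^{2\beta-\alpha}(1+o(1))$ as $u\to\infty$. Since $C(s,t)=s^{2\beta}\phi(t/s)$ is smooth off the diagonal, one then Taylor-expands to get $\E[\Delta_1 X(u)\Delta_1 X(v)]\approx\partial_s\partial_tC(u,v)$, combines this with the expansion of $\phi$ near $1$ and with the far-field bounds (H.2)(d)--(e), and obtains: (a) for each fixed $h$, $r(u,u+h)\to a_\alpha(h)$, with $a_\alpha$ as in (\ref{eq:6}) and $a_\alpha(0)=1$; (b) a domination $|r(u,v)|\le C\,g(|u-v|)$ valid uniformly on the whole region, with $\int_\R g^d<\infty$ when $\alpha<2-\tfrac1d$, together with a sharper two-sided expansion of $r(u,u+h)$ — whose correction term depends on $h/u$ and carries the scaling exponent $\beta-\tfrac\alpha2$ — needed in the critical case. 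The region where $u$ or $v$ is of order $1$ contributes $O(\e)$ after the prefactor, hence is harmless.

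\textbf{Finite-dimensional distributions, central case $\alpha<2-\tfrac1d$.} Fix $0\le s\le t$. By the change of variables $(u,v)\mapsto(u,h=v-u)$, dominated convergence via (b), and $q\ge d\Rightarrow\int_\R|a_\alpha|^q<\infty$,
\[
\E\bigl[F_\e^{(q)}(t)F_\e^{(q)}(s)\bigr]=c_q^2\,q!\,\e\int_0^{t/\e}\!\!\int_0^{s/\e}r(u,v)^q\,du\,dv\ \longrightarrow\ c_q^2\,q!\,(s\wedge t)\int_\R a_\alpha(h)^q\,dh\qquad(\e\to0),
\]
and, the tail being controlled uniformly by (b), summation over $q$ produces the covariance $\sigma^2(s\wedge t)$ of $\sigma W$. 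To upgrade covariance convergence to joint convergence towards a Brownian motion, I would invoke the multivariate fourth-moment/Malliavin--Stein criterion: it suffices that, for each fixed $q$, every contraction norm $\|h_{\e,t}^{(q)}\otimes_r h_{\e,t}^{(q)}\|$, $1\le r\le q-1$, tends to $0$. These are quadruple integrals of products of four copies of $r$; the estimates are those of the stationary Breuer--Major proof (\cite[Chapter~7]{IvanGioBook}), the only novelty being that non-stationarity forces one to keep two free ``base'' variables rather than one, which the bound (b) absorbs.

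\textbf{Critical case $\alpha=2-\tfrac1d$, and tightness.} Here $\int_\R a_\alpha^d=+\infty$, but $\int_\R|a_\alpha|^q<\infty$ for all $q>d$, so after dividing by $\sqrt{|\log\e|}$ every chaos of order $q>d$ vanishes and only $q=d$ survives. For it I must evaluate
\[
\frac1{|\log\e|}\,c_d^2\,d!\,\e\int_0^{t/\e}\!\!\int_0^{t/\e}r(u,v)^d\,du\,dv
\]
precisely; the logarithmic divergence is produced by pairs $(u,v)$ with $|u-v|$ ranging over the whole scale from $O(1)$ up to order $u$, and there the bare approximation $r(u,u+h)\approx a_\alpha(h)$ has to be replaced by the refined estimate from (b), because $\|\Delta_1 X(u)\|$ and $\|\Delta_1 X(v)\|$ are then of genuinely different orders. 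Carrying out the $h$- then the $u$-integration and isolating the $\log$-term yields the constant $\sigma^2_{1-2/d}$ of (\ref{eq:69}); the order-$d$ contractions are $o(|\log\e|)$ by the same computation, which is enough. Finally, for tightness under $f\in L^p(\R,\gamma)$, $p>2$, I follow Theorem \ref{thm1}: write $F_\e(t)-F_\e(s)=\delta^d\bigl((-DL^{-1})^d(F_\e(t)-F_\e(s))\bigr)$ and use Meyer's inequalities to bound $\E[|F_\e(t)-F_\e(s)|^p]\le C\,|t-s|^{p/2}$ uniformly in $\e$ (after the $\sqrt{|\log\e|}$ normalization in the critical case), the integrability estimate (b) for $r$ being again the key input; Kolmogorov's criterion then gives tightness in $C(\R_+)$, which together with the previous two steps concludes the proof.

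\textbf{Main obstacle.} The decisive and genuinely new difficulty is the covariance analysis underlying step (b): extracting from (H.1)--(H.2), in the absence of stationarity, both the cancellations giving $\E[(\Delta_1 X(u))^2]\asymp u^{2\beta-\alpha}$ and a control of $r(u,v)$ that is simultaneously (i) sharp enough near the diagonal to recover $a_\alpha$, (ii) uniform in the two free variables over all of $[0,t/\e]^2$, and (iii) precise enough off the diagonal to identify the constant (\ref{eq:69}) in the critical regime. Once this is secured, the remaining steps are adaptations of the classical Breuer--Major/Taqqu machinery and of the proof of Theorem \ref{thm1}.
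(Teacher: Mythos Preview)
Your proposal is correct and follows essentially the same approach as the paper: chaos expansion of $F_\e$, verification of a multivariate chaotic CLT via covariance convergence and vanishing contractions (the latter reduced to the fBm case once $\Phi\approx a_\alpha$ is established), and tightness via Meyer's inequalities exactly as in Theorem~\ref{thm1}. The paper packages the covariance analysis as a two-region decomposition --- near-diagonal $|u-v|\lesssim u\wedge v$, where $\Phi=a_\alpha+\text{remainder}$ (Lemmas~\ref{lem:2}, \ref{lem:1}), versus far-diagonal, where (H.2) gives direct decay (Lemma~\ref{lem:3}) --- rather than the single uniform domination $|r(u,v)|\le C\,g(|u-v|)$ you posit, but this is organizational rather than substantive, and the main obstacle you single out is indeed where all the work lies.
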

    
    Let us finally consider the case
   $\alpha > 2 - \frac{1}{d}$ and $d\ge 2$.
   We will show that, for all $t\ge 0$,  the random variable  $\e^{\frac 12 -d( 1-\frac \alpha 2)} \widetilde{F}_{\varepsilon}(t)$
converges in $L^2(\Omega)$ to a random variable   $c_d H_\infty(t)$ belonging to the $d$th Wiener chaos.
    The process $H_\infty = (H_\infty(t))_{t\ge 0}$ is a generalization of the Hermite process (see \cite{DM,NNT,T}) and it has a covariance given by
     \begin{equation}
    \label{eq:70a}
   K_d(s,t)=   \Ex[H_{\infty}(s) H_{\infty}(t)] = 
    \frac{ d!}{\left( 2\lambda \right)^d}
    \int_0^{s} \int_0^{t} \left(
      \frac{
        \partial_{u}\partial_v \Ex[X(u) X(v)]
      }{
        \left( uv \right)^{\beta-\alpha/2}
      }
    \right)^d  du dv.
  \end{equation}
  This then leads to the following  non-central limit theorem
    in the case   $\alpha > 2 - \frac{1}{d}$. 
    
    \begin{theorem} \label{thm1.3}
 Under the assumptions of  Theorem \ref{thm:5},  if $\alpha > 2 - \frac{1}{d}$, then the process
 $\left\{ \e^{\frac12-d(1-\frac{\alpha}2) }
    F_\e :\, \e>0 \right\}$ converges in law   in $C(\R_+)$ to $F_{\infty}= c_d H_\infty$.
\end{theorem}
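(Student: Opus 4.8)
The plan is to first establish the $L^2(\Omega)$-convergence announced just above the statement, then to upgrade it to convergence of finite-dimensional distributions, and finally to add tightness through a Kolmogorov-type moment bound. Since by self-similarity $F_\e$ has the same law, as a process, as $\widetilde{F}_\e(t)=\e^{-1/2}\int_0^t f(Y_\e(u))\,du$, it suffices to work with $\widetilde{F}_\e$. The Hermite expansion (\ref{hermite}) yields the chaos decomposition
\[
\e^{\frac12-d(1-\frac\alpha2)}\widetilde{F}_\e(t)=\sum_{q\ge d}c_q\,\e^{-d(1-\frac\alpha2)}\int_0^t H_q(Y_\e(u))\,du=:\sum_{q\ge d}T_\e^{(q)}(t),
\]
where each $T_\e^{(q)}(t)$ lies in the $q$-th Wiener chaos. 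Writing $\rho_\e(u,v):=\Ex[Y_\e(u)Y_\e(v)]=\Ex[\Delta_\e X(u)\Delta_\e X(v)]/(\norm{\Delta_\e X(u)}_{L^2(\Omega)}\norm{\Delta_\e X(v)}_{L^2(\Omega)})$ and using the Hermite product formula $\Ex[H_q(Y_\e(u))H_q(Y_\e(v))]=q!\,\rho_\e(u,v)^q$ together with the orthogonality of chaoses, all second moments reduce to
\[
\Ex\bigl[T_\e^{(q)}(s)\,T_\e^{(q)}(t)\bigr]=c_q^2\,q!\;\e^{-d(2-\alpha)}\int_0^s\!\!\int_0^t \rho_\e(u,v)^q\,du\,dv.
\]

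The analytic heart of the matter is the asymptotics of $\rho_\e(u,v)$ as $\e\to0$, which we extract from (H.1)--(H.2). Since $d\ge 2$ forces $\alpha>2-\tfrac1d\ge\tfrac32>1$, condition (H.1)(c) is available and a Taylor expansion gives $\norm{\Delta_\e X(u)}_{L^2(\Omega)}^2\sim 2\lambda\,u^{2\beta-\alpha}\e^{\alpha}$ (the a priori leading term, of order $\e$, cancels precisely because $\psi'(1)=\beta\psi(1)$), while for $u\neq v$ one has $\Ex[\Delta_\e X(u)\Delta_\e X(v)]=\e^2\,\partial_u\partial_v\Ex[X(u)X(v)]+o(\e^2)$, the remainder being controlled uniformly via (H.2)(d)--(e). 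The singular part $-\lambda(x-1)^\alpha$ of $\phi$ endows $\partial_u\partial_v\Ex[X(u)X(v)]$ with a diagonal singularity of order $\abs{u-v}^{\alpha-2}$ (the $\psi$-part being of strictly lower order by (H.1)(a)--(b)). Consequently, off the diagonal,
\[
\e^{-d(2-\alpha)}\rho_\e(u,v)^d\longrightarrow (2\lambda)^{-d}\Bigl(\frac{\partial_u\partial_v\Ex[X(u)X(v)]}{(uv)^{\beta-\alpha/2}}\Bigr)^{\!d},
\]
and the same bounds supply a dominating function: one verifies $\e^{-d(2-\alpha)}\abs{\rho_\e(u,v)}^{d}\le C\bigl(\abs{u-v}^{d(\alpha-2)}\wedge\e^{-d(2-\alpha)}\bigr)$, so that the strip $\{\abs{u-v}\le\e\}$ contributes only $O(\e^{\,1-d(2-\alpha)})\to0$ -- and this is exactly where the hypothesis $\alpha>2-\tfrac1d$, equivalently $d(2-\alpha)<1$, is used -- whereas $\abs{u-v}^{d(\alpha-2)}$ is Lebesgue-integrable on $[0,t]^2$ for the very same reason. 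Dominated convergence then gives $\Ex[T_\e^{(d)}(s)T_\e^{(d)}(t)]\to c_d^2\,K_d(s,t)$, with $K_d$ as in (\ref{eq:70a}). For $q>d$ we bound $\abs{\rho_\e(u,v)}^{q}\le\abs{\rho_\e(u,v)}^{d+1}$ (recall $\abs{\rho_\e}\le1$), whence $\sum_{q>d}\Ex[T_\e^{(q)}(t)^2]\le\norm{f}_{L^2(\R,\gamma)}^2\,\e^{-d(2-\alpha)}\int_0^t\!\!\int_0^t\abs{\rho_\e(u,v)}^{d+1}\,du\,dv\to0$, the extra power $2-\alpha>0$ beating the renormalization; and the same estimates over $[s,t]^2$ give, uniformly in $\e$,
\[
\Ex\Bigl[\bigl(\e^{\frac12-d(1-\frac\alpha2)}(\widetilde{F}_\e(t)-\widetilde{F}_\e(s))\bigr)^2\Bigr]\le C\,\abs{t-s}^{\,2+d(\alpha-2)}.
\]

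It follows that $\e^{\frac12-d(1-\frac\alpha2)}\widetilde{F}_\e(t)$ is Cauchy in $L^2(\Omega)$; its limit lies in the (closed) $d$-th Wiener chaos, and by the covariance computation above it is of the form $c_d H_\infty(t)$, where $H_\infty$ is the centered process in the $d$-th chaos with covariance $K_d$ given by (\ref{eq:70a}) -- the generalized Hermite process announced before the statement. Taking linear combinations over finitely many times $t_1,\dots,t_m$ upgrades this to convergence of the finite-dimensional distributions of $\e^{\frac12-d(1-\frac\alpha2)}F_\e$ towards those of $F_\infty=c_d H_\infty$. Moreover, since $H_\infty$ lives in a fixed chaos, hypercontractivity turns the displayed second-moment bound, read at the level of the limit, into $\Ex[\abs{H_\infty(t)-H_\infty(s)}^p]\le C_p\abs{t-s}^{\frac p2(2+d(\alpha-2))}$ for every $p\ge2$, and since $2+d(\alpha-2)>1$ the Kolmogorov--Chentsov criterion produces a continuous modification of $H_\infty$; thus $F_\infty\in C(\R_+)$.

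It remains to prove tightness of $\{\e^{\frac12-d(1-\frac\alpha2)}F_\e:\e>0\}$ in $C(\R_+)$; as this family has the same law as $\{\e^{\frac12-d(1-\frac\alpha2)}\widetilde{F}_\e:\e>0\}$, the second-moment estimate displayed at the end of the preceding paragraph -- uniform in $\e$ and of the form $C\abs{t-s}^{1+b}$ with $b=1+d(\alpha-2)>0$ -- is precisely the hypothesis of Kolmogorov's tightness criterion on each interval $[0,T]$, the initial value being $0$. (In contrast to Theorem~\ref{thm1}, no integrability of $f$ beyond $L^2(\R,\gamma)$ is needed here, because the limit $H_\infty$ is more regular; alternatively one may invoke the Malliavin--Meyer argument used for Theorem~\ref{thm1}, which applies verbatim.) Combining tightness with the finite-dimensional convergence gives $\e^{\frac12-d(1-\frac\alpha2)}F_\e\Rightarrow F_\infty$ in $C(\R_+)$. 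I expect the main obstacle to be the step invoking (H.1)--(H.2): establishing not just the pointwise asymptotics but, crucially, the \emph{uniform} domination of $\e^{-d(2-\alpha)}\abs{\rho_\e(u,v)}^{d}$ -- including near the diagonal and near the origin $u=0$, where $\norm{\Delta_\e X(u)}_{L^2(\Omega)}$ degenerates -- is delicate precisely because the increments of $X$ are not stationary, so that one cannot reduce the problem to a single covariance function and must genuinely exploit both hypotheses.
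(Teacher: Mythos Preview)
Your proposal is correct and follows essentially the same route as the paper: chaos-by-chaos $L^2$ analysis via the covariance $\rho_\e(u,v)^q$, showing the $d$-th projection converges while higher ones vanish, plus a second-moment increment bound $C\abs{t-s}^{2+d(\alpha-2)}$ for Kolmogorov tightness. Two minor points of alignment with the paper's execution: (i) to obtain the Cauchy property in $L^2(\Omega)$ you actually need the \emph{mixed} covariance $\Ex[T_\e^{(d)}(t)T_{\e'}^{(d)}(t)]$ with $\e\neq\e'$, which the paper handles by working with $\Phi_{\e,\delta}$ rather than just $\rho_\e=\Phi_{\e,\e}$; (ii) the uniform domination you flag as the delicate step is carried out in the paper's Lemmas~\ref{lem:7}--\ref{lem:8}, where the bound reads $\abs{\rho_\e(u,v)}\le C\,\e^{2-\alpha}(u\vee v)^{\alpha-2}(uv)^{-\beta+\alpha/2}$ for $u,v\ge\e$ (so the origin factor you anticipate is indeed present), and the strip $\{u\wedge v\le\e\}$ is handled separately using $\abs{\rho_\e}\le1$.
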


We note that a \emph{discrete} counterpart of point 1 in Theorem 1.1 was already obtained by Harnett and Nualart in \cite{HN}, in exactly the same setting.  However, we would like to offer the following comments, to help the reader comparing our results with those contained in \cite{HN}.  Firstly,   neither point 2  of Theorem  \ref{thm:5} nor   the tightness property  and  Theorem \ref{thm1.3} have been considered in \cite{HN}.   Secondly, and a little bit against common intuition, it turns out that it was more
difficult to deal with the continuous setting; indeed, in the continuous case we have to handle the situation  where $|t-s| <1$, which does not appear in the discrete setting.
Thirdly, our original motivation of proving Theorems \ref{thm1}, \ref{thm:5} and  \ref{thm1.3} is of geometrical nature; in our mind, this work
actually represents a first step towards a better understanding of
the asymptotic behavior of functionals
of the kind (\ref{quest}) (or more complicated ones) that arise very often in differential geometry.

To conclude this introduction, let us go back to the case of the bifractional
Brownian motion $X=\widetilde{B}$, and let us see what the conclusions of
Theorems \ref{thm:5} and \ref{thm1.3} become in this case, when for $f$ we choose the function $f(x)=\abs{x} - \sqrt{\frac{2}{\pi}}$. Since, on one hand, the
bifractional Brownian motion defined by (\ref{bifbm}) satisfies (H.1) and (H.2) with $\alpha=2\beta=2HK$ and, on the other hand, 
one has $\|\Delta_\e \widetilde{B}(t)\|_{L^2(\Omega)}\sim 2^{\frac{1-K}{2}}\e^{HK}$
as $\e\to 0$ for any $t>0$,   we deduce from our Theorems~\ref{thm:5} and \ref{thm1.3}  that:
  \begin{itemize}
  \item if $HK< 3/4$, then the family $\left\{ \e^{\frac12-HK}\left( \mathcal{L}(\widetilde{B}^\e;[0,t])
  -\E\left[ \mathcal{L}(\widetilde{B}^\e;[0,t])\right]\right):\, \e>0 \right\}$ converges in law in $C(\R_+)$
    to a Brownian motion with variance
  \begin{equation*}
2^{1-K} \sum_{q=2}^{\infty} \frac{1}{q!  (2q-1)^2}  \int_{-\infty}^{\infty}  a_{2HK}(h)^q dh,
  \end{equation*}
  see also (\ref{conv1bis}) and compare with claim (\ref{claim});
\item if $HK=3/4$, then the family $\left\{ \frac{\e^{-\frac14}}{\sqrt{\log \e}}\left( \mathcal{L}(\widetilde{B}^\e;[0,t])
  -\E\left[ \mathcal{L}(\widetilde{B}^\e;[0,t])\right]\right):\, \e>0 \right\}$ converges in law in $C(\R_+)$ to a Brownian motion with variance $2^{-K}/64$;
\item if $HK > 3/4$, then the family $\left\{ \e^{HK-1}\left( \mathcal{L}(\widetilde{B}^\e;[0,t])
  -\E\left[ \mathcal{L}(\widetilde{B}^\e;[0,t])\right]\right):\, \e>0 \right\}$  converges in law in $C(\R_+)$ towards a stochastic process $F_{\infty}$ which lies in
the second Wiener chaos.
\end{itemize}

The rest of the paper is organized as follows.   Section \ref{sec2} contains some preliminaries on Maliavin calculus and a basic multivariate chaotic central limit theorem.   The proof of Theorem \ref{thm1} is  given in Section \ref{sec3}. Section \ref{sec4}  provides some useful properties satisfied by  self-similar processes $X$ under assumptions (H.1) and (H.2) and contains the proof of Theorem
\ref{thm:5}. The proof of Theorem
 \ref{thm1.3} is then given in Section  \ref{sec5}. Finally, Section \ref{sec6} contains some technical lemmas that are used along the paper. 

   Throughout the paper, $C$ denotes a  generic positive constant  whose value
   may change from line to line.

\section{Preliminaries}  \label{sec2}
In this section, we gather several preliminary results that will be used for
proving the main results of this paper. 

\subsection{Elements of Malliavin calculus}

We assume that the reader is  already familiar with the classical concepts of
Malliavin calculus, as outlined for example in the three books \cite{IvanGioBook,DavidBook,DavidEulaliaBook}.

To be in a position to use Malliavin calculus to prove the results of our paper,
we shall adopt the following classical Hilbert space notation. 
Let   $\HH$ be a real and separable Hilbert space.  Let  $X$ be an {\it  isonormal} Gaussian process indexed by $\HH$ and defined
on a probability space $(\Omega,\mathcal{F}, \mathbb{P})$, that is, $X=\{X(h), h\in\mathfrak{H}\}$ is a family of jointly centered Gaussian random variables satisfying $\E[X(h)X(g)]=\langle h,g\rangle_\HH$ for all
$h,g\in \HH$. We will also assume that $\mathcal{F}$ is the $\sigma$-field  generated by $X$.

For integers $q\geq 1$, let $\HH^{\otimes q}$ denote the $q$th tensor product of
$\HH$, and let $\HH^{\odot q}$ denote the subspace of symmetric tensors of
$\HH^{\otimes q}$. 
Let $\{e_n\}_{n\geq 1}$ be a complete orthonormal system in $\HH$.
For functions $f,g\in\HH^{\odot q}$ and $r\in\{1,\ldots,q\}$ we define
the $r$th-order contraction of $f$ and $g$ as the element
of $\HH^{\otimes(2q-2r)}$ given by
$$
f\otimes_r g = \sum_{i_1,\ldots,i_r=1}^\infty 
\langle f,e_{i_1}\otimes \ldots\otimes e_{i_r}\rangle_{\HH^{\otimes r}}
\langle g,e_{i_1}\otimes \ldots\otimes e_{i_r}\rangle_{\HH^{\otimes r}},
$$
where $f\otimes_0 g=f\otimes g$ by definition and, if $f,g\in\HH^{\odot q}$, $f\otimes_q g=\langle f,g\rangle_{\HH^{\otimes q}}$.

The $q$th Wiener chaos is the closed linear subspace
of $L^2(\Omega)$ that is generated by the random variables $\{H_q(X(h)),\,h\in \HH,\,\|h\|_\HH=1\}$, where $H_q$ stands for the $q$th Hermite polynomial.
For $q\geq 1$, it is known that the map $I_q(h^{\otimes q})=H_q(X(h))$ ($h\in\HH$, $\|h\|_\HH=1$) provides a linear isometry between  $\HH^{\odot q}$ (equipped with the modified 
norm $\sqrt{q!}\|\cdot\|_{\HH^{\otimes q}}$) and the $q$th Wiener chaos. By convention, $I_0(x)=x$ for all $x\in\R$.

It is well-known that any $F\in L^2(\Omega)$ can be decomposed into Wiener chaos as follows:
\begin{equation}\label{decompo}
F=\E[F]+\sum_{q=1}^\infty I_q(f_q),
\end{equation}
where the kernels $f_q\in \HH^{\odot q}$ are uniquely determined by $F$.

For a smooth and cylindrical random variable $F= f(X(h_1), \dots , X(h_n))$,
with $h_i \in \mathfrak{H}$ and $f \in C_b^{\infty}(\mathbb{R}^n)$ ($f$ and all of its partial derivatives are bounded), we define its Malliavin derivative as the $\mathfrak{H}$-valued random variable given by
\[
 DF = \sum_{i=1}^n \frac{\partial f}{\partial x_i} (X(h_1), \dots, X(h_n))h_i.
\]
By iteration, one can define the $k$-th derivative $D^k F$  as an element of $L^2(\Omega; \mathfrak{H}^{\otimes k})$. For any natural number $k$ and any real number $ p \geq 1$, we define  the Sobolev space $\mathbb{D}^{k,p}$  as the closure of the space of smooth and cylindrical random variables with respect to the norm $\|\cdot\|_{k,p}$ defined by 
\[
 \|F\|^p_{k,p} = \mathbb{E}(|F|^p) + \sum_{i=1}^k \mathbb{E}(\|D^i F\|^p_{\mathfrak{H}^{\otimes i}}).
\]
The divergence operator $\delta$ is defined as the adjoint of the derivative operator $D$.  An element $u \in L^2(\Omega; \mathfrak{H})$ belongs to the domain of $\delta$, denoted by ${\rm Dom}\, \delta$, if there is a constant $c_u$ depending on $u$ such that 
\[
|\mathbb{E} (\langle DF, u \rangle_{\mathfrak{H}})| \leq c_u \|F\|_{L^2(\Omega)}
\] for any $F \in \mathbb{D}^{1,2}$.  If $u \in {\rm Dom} \,\delta$, then the random variable $\delta(u)$ is defined by the duality relationship 
\begin{equation} \label{dua}
\mathbb{E}(F\delta(u)) = \mathbb{E} (\langle DF, u \rangle_{\mathfrak{H}}) \, ,
\end{equation}
which holds for any $F \in \mathbb{D}^{1,2}$.  
In a similar way we can introduce the iterated divergence operator $\delta^k$ for each integer $k\ge 2$, defined by the duality relationship 
\begin{equation} \label{dua2}
\mathbb{E}(F\delta^k(u)) = \mathbb{E}  \left(\langle D^kF, u \rangle_{\mathfrak{H}^{\otimes k}} \right) \, ,
\end{equation}
for any $F \in \mathbb{D}^{k,2}$, where $u\in  {\rm Dom}\, \delta^k \subset L^2(\Omega; \mathfrak{H}^{\otimes k})$.

The Ornstein-Uhlenbeck semigroup $(T_t)_{t \geq 0}$ is the  semigroup of operators on $L^2(\Omega)$ defined by
\[
T_t F = \sum_{q=0}^\infty e^{-qt} I_q(f_q),
\]
if $F$ admits the Wiener chaos expansion (\ref{decompo}). Denote by $L $ the infinitesimal generator of $(T_t)_{t \geq 0}$ in $L^2(\Omega)$. Let   $L^{-1} F = -\sum_{q=1}^\infty \frac{1}{q} I_q(f_q) $ if $F$ is given by  (\ref{decompo}).   

The operators $D$, $\delta $ and $L$ satisfy the relationship $L=-\delta D$, which leads to the representation
\begin{equation}  \label{deltad}
F= -\delta DL^{-1}F,
\end{equation}
for any centered random variable $F\in L^2(\Omega)$.

Consider the isonormal Gaussian process $X(h)=h$  indexed by $\HH=\R$,  defined in the probability space $(\R, \mathcal{B}(\R), \gamma)$.  We denote the corresponding Sobolev spaces of functions by $\mathbb{D} ^{k,p} (\R,\gamma)$. In this context,  for any function
$g$, we have $Dg =g'$,  $\delta g= xg-g'$ and  $Lg=g''-xg' $ (see \cite{IvanGioBook}).
Let  $f\in L^2( \R, \gamma)$  be a function of Hermite rank $d$, with  expansion  (\ref{hermite}).   Let us introduce  the function $f_d$ defined by a shift of $d$ units in the coefficients, that is,
\begin{equation} \label{fd3}
f_d(x)= \sum_{q=d} ^\infty c_q H_{q-d}(x).
\end{equation}
We claim that $f_d$ belongs to the Sobolev space $\mathbb{D}^{2,d} (\R,\gamma)$.  In fact,  using that $H_q'(x)= qH_{q-1}(x)$, we can write
\[
f^{(d)}_d(x) = \sum_{q=2d} ^\infty c_q (q-d) (q-d-1) \cdots (q-2d+1) H_{q-2d}(x)
\]
and
\begin{align*}
\| f^{(d)} _d \|^2_{L^2(\R,\gamma)}
 = \sum_{q=2d}^ \infty c_q^2  (q-d) ^2(q-d-1)^2\cdots (q-2d+1)^2 (q-2d)! 
\le  \sum_{q=2d} ^\infty c_q^2  q!< \infty.
\end{align*}
 The function $f_d$ has the following representation in terms of the Malliavin operators:
\begin{equation} \label{fd}
f_d= (-DL^{-1})^df.
\end{equation}
Indeed, using that $H_q'(x)= qH_{q-1}(x)$, we have
\[
-DL^{-1} f=\sum_{q=d} ^\infty  \frac {c_q} q H'_{q}(x)= \sum_{q=d} ^\infty  c_q  H_{q-1}(x),
 \]
 and iterating $d$ times this formula, we get (\ref{fd}). 
  Formula (\ref{fd}) implies that if $f\in L^p (\R, \gamma)$ for some $p>1$, then  $f_d \in  \mathbb{D}^{d,p} (\R, \gamma)$, that is, $f_d$ is
 $d$-times weakly differentiable with  derivatives in $L^p(\R,\gamma)$. 
  In fact, by Meyer inequalities (see \cite{DavidBook}), the operators $D$ and   $(-L)^{1/2}$ are equivalent in  $L^p(\R, \gamma)$  and we obtain, for any $k=1,\dots, d$
\begin{equation}  \label{equ1}
\|f_d^{(k)}\|_{L^p(\R, \gamma)}  = \|  D^k[(-DL^{-1})^d f] \|_{L^p(\R, \gamma)} \le c_p  
\|   (-L)^{k/2} (DL^{-1})^d f \|_{L^p(\R, \gamma)} \le c_p   \|f\| _{L^p(\R, \gamma)}.
\end{equation}

\subsection{Multivariate chaotic central limit theorem}
Points 1 and 2 in Theorem \ref{thm:5} will be obtained by checking that the assumptions of the following theorem are satisfied.
We assume that  $X$ is an  isonormal  Gaussian process indexed by $\HH$ defined
on a probability space $(\Omega,\mathcal{F}, \mathbb{P})$, with $\mathcal{F}$ is the  $\sigma$-field generated by $X$.

\begin{theorem}
  \label{thm:4}
Fix  an integer $p\geq 1$, and  let $\left\{ G^\e:\,\e>0 \right\}$ be a family of $p$-dimensional vectors with components in $L^2(\Omega)$ and centered. According to (\ref{decompo}), we can write each component $G_i^\e$ of $G^\e$ in the form
  \begin{equation*}
    G_i^\e = \sum_{q=1}^{\infty} I_q(g^{\e}_{i,q}).
  \end{equation*}
  Let us suppose that the following conditions hold.
  \begin{enumerate}
  \item[(a)] For each $i,j\in\{1,\ldots,p\}$ and each $q \geq 1$, $ \sigma_{i,j,q}=\lim_{\e \to 0} q! \langle g^\e_{i,q},g^\e_{j,q}\rangle_{\HH^{\otimes q}}$ exists.
  \item[(b)]  For each $i\in\{1,\ldots,p\}$, $\sum_{q=1}^{\infty} \sigma_{i,i,q} < \infty$.
  \item[(c)]  For each $i\in\{1,\ldots,p\}$, each $q \geq 2$ and each $r = 1,\dots,q-1$, \break  $\lim_{\e \to 0}  \|      g^\e_{i,q} \otimes_r
  g^\e_{i,q}  \| _{\HH^{\otimes {2q-2r}}}=0$.
  \item[(d)]  For each $i\in\{1,\ldots,p\}$, $\lim_{N \to \infty} \sup_{\e \in(0,1]} \, \sum_{q=N+1}^{\infty} q!
       \| g^\e_{i,q} \| _{\HH^{\otimes {2q}}}^2 = 0$.
  \end{enumerate}
Then $G^\e=(G_1^\e,\ldots,G_p^\e)$ converges in distribution to
$N_p(0,\Sigma)$ as $\e$ tends to zero, where $\Sigma=(\sigma_{i,j})_{1\leq i,j\leq p}$ is defined by
$\sigma_{i,j}=\sum_{q=1}^\infty \sigma_{i,j,q}$.
\end{theorem}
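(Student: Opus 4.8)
The plan is to prove Theorem~\ref{thm:4} by combining the multivariate fourth-moment/chaotic CLT machinery (the Peccati--Tudor theorem) with a uniform-tail truncation argument, since condition (d) is precisely what is needed to reduce the infinite chaos expansion to a finite one uniformly in $\e$. First I would fix an integer $N\ge 1$ and introduce the truncated vectors $G^{\e,N}=(G_1^{\e,N},\dots,G_p^{\e,N})$ with $G_i^{\e,N}=\sum_{q=1}^N I_q(g^\e_{i,q})$. By condition (d), $\sup_{\e\in(0,1]}\E[(G_i^\e-G_i^{\e,N})^2]=\sup_{\e}\sum_{q>N}q!\|g^\e_{i,q}\|^2_{\HH^{\otimes q}}\to 0$ as $N\to\infty$; hence $G^{\e,N}\to G^\e$ in $L^2(\Omega)$ uniformly in $\e$, and by a standard triangle-inequality argument on metrics for convergence in distribution (e.g.\ via characteristic functions, using $|\E[e^{i\langle\xi,G^\e\rangle}]-\E[e^{i\langle\xi,G^{\e,N}\rangle}]|\le |\xi|\,\|G^\e-G^{\e,N}\|_{L^2}$), it suffices to show that for each fixed $N$, $G^{\e,N}\Rightarrow N_p(0,\Sigma^{(N)})$ as $\e\to0$, where $\Sigma^{(N)}_{i,j}=\sum_{q=1}^N\sigma_{i,j,q}$, and then let $N\to\infty$ (noting $\Sigma^{(N)}\to\Sigma$ by (b), which guarantees absolute convergence of the series defining $\sigma_{i,j}$ via Cauchy--Schwarz at the level of each chaos).

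Next, for fixed $N$, I would apply the multivariate chaotic CLT (Peccati--Tudor; see e.g.\ \cite[Chapter 6]{IvanGioBook}) to the $pN$-dimensional vector whose entries are $I_q(g^\e_{i,q})$ for $1\le i\le p$, $1\le q\le N$. That theorem states: if for each chaos order $q$ the covariance matrix of $(I_q(g^\e_{i,q}))_i$ converges, and if each individual sequence $I_q(g^\e_{i,q})$ satisfies the fourth-moment condition — equivalently, by the contraction criterion, $\|g^\e_{i,q}\otimes_r g^\e_{i,q}\|_{\HH^{\otimes(2q-2r)}}\to0$ for all $r=1,\dots,q-1$ — then the whole vector converges jointly to a Gaussian vector whose covariance is the block-diagonal limit (different chaoses being asymptotically independent). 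Condition (a) supplies the within-chaos covariance limits $\sigma_{i,j,q}$, and condition (c) supplies exactly the contraction estimates needed for the fourth-moment criterion. Summing the independent Gaussian blocks over $q=1,\dots,N$ via the continuous mapping theorem then yields $G^{\e,N}\Rightarrow N_p(0,\Sigma^{(N)})$.

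The main obstacle — or rather, the only genuinely delicate point — is justifying the interchange of the $\e\to0$ and $N\to\infty$ limits, i.e.\ making precise that uniform $L^2$-approximation of $G^\e$ by $G^{\e,N}$ transfers the convergence in law. The clean way is to work with a metric for weak convergence on $\R^p$ (say the bounded-Lipschitz or Lévy--Prokhorov metric $\mathbf{d}$): one writes $\mathbf{d}(G^\e,N_p(0,\Sigma))\le \mathbf{d}(G^\e,G^{\e,N})+\mathbf{d}(G^{\e,N},N_p(0,\Sigma^{(N)}))+\mathbf{d}(N_p(0,\Sigma^{(N)}),N_p(0,\Sigma))$, bounds the first term by $C\,\|G^\e-G^{\e,N}\|_{L^2}\le C\,\big(\sup_\e\sum_{q>N}q!\|g^\e_{i,q}\|^2\big)^{1/2}$ uniformly in $\e$ using (d), and the third term tends to $0$ as $N\to\infty$ by (b); then for each fixed $N$ the middle term tends to $0$ as $\e\to0$ by the Peccati--Tudor step. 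Choosing $N$ large and then $\e$ small makes the sum arbitrarily small. One should also double-check the harmless bookkeeping that $\langle g^\e_{i,q},g^\e_{j,q}\rangle$ is finite for each $\e$ (true since components are in $L^2(\Omega)$) and that the symmetry of the kernels $g^\e_{i,q}\in\HH^{\odot q}$ is preserved, so that all contraction norms and isometry formulas apply verbatim.
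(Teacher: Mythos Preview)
Your proposal is correct and follows essentially the same approach as the paper's own proof: truncate the chaos expansion at level $N$, apply the Nualart--Peccati/Peccati--Tudor machinery (using conditions (a) and (c)) to obtain joint Gaussian convergence of the truncated vector to $N_p(0,\Sigma^{(N)})$, and then use conditions (b) and (d) together with a triangle-inequality argument to remove the truncation. If anything, your write-up is more explicit than the paper's (which simply says ``a simple triangular inequality allows us to conclude''), in that you spell out the metric-based interchange of the $\e\to0$ and $N\to\infty$ limits.
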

\noindent
{\it Proof}.  This theorem is a multivariate counterpart of the chaotic central limit theorem proved  by  Hu and Nualart in \cite{HuNu}.
First notice that, by the results of Nualart and Peccati \cite{NuPe} and Peccati and Tudor \cite{PeTu},  conditions (a) and (c) imply that,
for any $N\ge 1$,  the family of random vectors  $(I_q(g^\e_{i,q}))_{1\le q\le N, 1\le i\le p}$ converges in law to a  centered Gaussian vector
 $(Z_{i,q})_{1\le q\le N, 1\le i\le p}$ with covariance $\E[Z_{i,q}Z_{j,q'}]=\sigma_{i,j,q} \delta_{q,q'}$. This implies that, for each $N\ge 1$, 
 the   family  of $p$-dimensional random vectors $\left(\sum_{q=1} ^N  I_q(g^\e_{i,q} )\right)_{1\le i   \le p}$ converges in law to  the Gaussian distribution $N_p(0, \Sigma^N)$, where $\Sigma^N=(\sigma^N_{i,j})_{1\leq i,j\leq p}$ is defined by
$\sigma^N_{i,j}=\sum_{q=1}^N \sigma_{i,j,q}$. Finally, conditions (b) and (d) and a simple triangular inequality allows us to conclude the proof. 
\qed

\section{Proof of Theorem~\ref{thm1}}
\label{sec3}

Since the convergence in the sense of f.d.d. follows from the classical Breuer-Major theorem (see, e.g., \cite{BenHariz}),
it remains to show that the family $\{ Z_\e : \e >0\}$ is tight.
For this we need to show that for any $0\le s<t  $ and $\e >0$ and for some $p>2$, there exists a constant $C_{p}>0$ such that
\[
\| Z_\e(t) - Z_\e(s) \|_{L^p(\Omega)}  \le C_{p} |t-s|^{1/2}.
\]
To show this inequality we will use an approach based on stochastic integral representations and Meyer's inequalities.
   
 Let $\mathfrak{H}$ be the Hilbert space defined as the closure of the set of step functions with respect to the scalar product
$\langle {\bf 1}_{[0,t]},{\bf 1}_{[0,s]}\rangle_\mathfrak{H} = \E[Y(s)Y(t)]$, $s,t\geq 0$. 
By identifying $Y(t)$ with $Y({\bf 1}_{[0,t]})$,  we can thus suppose that $Y$ is an isonormal Gaussian process indexed by $\HH$  defined on a probability space $(\Omega, \mathcal{F} , \mathbb{P})$. We will assume that $\mathcal{F}$ is generated by $Y$.

The function $f_d$ introduced in (\ref{fd3})  leads to  the following representation of $f(Y(u))$ as an iterated divergence:
\[
f(Y(u))= \delta^d \left( f_d(Y(u))     \mathbf{1}_{[0,u]} ^{\otimes d} \right).
\]
Indeed,
\begin{align*}
f(Y(u))&=\sum_{q=d} ^\infty c_q H_{q}(Y(u)) =\sum_{q=d} ^\infty c_q I_q\left(
   \mathbf{1}_{[0,u]} ^{\otimes q} \right)\\
&
=
\sum_{q=d} ^\infty c_q  \delta^d \left( I_{q-d} \left(
    \mathbf{1}_{[0,u]} ^{\otimes q-d} \right)     \mathbf{1}_{[0,u]} ^{\otimes d}
\right) 
= \delta^d \left( \sum_{q=d} ^\infty c_q  H_{q-d} \left(
Y(u)  \right)     \mathbf{1}_{[0,u]}^{\otimes d}
\right).
\end{align*}
Then,   using Meyer's inequalities, we obtain
\begin{align*}
\| Z_\e(t)- Z_\e(s) \| _{L^p(\Omega)} &
=\sqrt{\e}  \left\| \int_{s/\e} ^{t/\e} f(Y(u)) du \right \| _{L^p(\Omega)} \\
&= \sqrt{\e}  \left\| \int_{s/\e} ^{t/\e}  \delta ^d \left( f_d(Y(u))       \mathbf{1}_{[0,u]} ^{\otimes d} \right)d u\right \| _{L^p(\Omega)} \\
&\le 
 c_p    \sum_{k=0}^d  \sqrt{\e} \left\| \int_{s/\e} ^{t/\e}  D^k \left( f_d(Y(u) )    \mathbf{1}_{[0,u]} ^{\otimes d} \right)du \right \| _{L^p(\Omega; \HH^{\otimes k})} 
\\ &=:  c_p   \sum_{k=0}^d  R_k.
\end{align*}
   Using Minkowski and H\"older inequalities, we can write, for any $k=0,1, \dots, d$,
\begin{align*}
R_k  &=
 \sqrt{\e}  \left\| \int_{ [s/\e,t/\e]^2} f^{(k)}_d(Y(u))    f^{(k)}_d(Y(v))    \langle      \mathbf{1}_{[0,u]},    \mathbf{1}_{[0,v]}  \rangle_{\HH}  ^{d+k}  du dv\right \| _{L^{p/2}(\Omega)} ^{1/2}\\
& \le 
    \| f^{(k)}_d \|_{L^p(\R, \gamma)}  \left( \e \int_{s/\e} ^{t/\e}  \int_{s/\e} ^{t/\e}     | \rho(u-v)|^{d+k} dudv   \right) ^{1/2}.
\end{align*}
Using the assumptions of Theorem \ref{thm1} as well as (\ref{equ1}), we deduce that  $ \| f^{(k)}_d \|_{L^p(\R, \gamma)}$  is finite.
Finally,  the change of variable $(u,v) \to (u,u+h)$ leads to
 \[
 \e \int_{s/\e} ^{t/\e}  \int_{s/\e} ^{t/\e}     |    \rho(u-v)|^{d+k} dudv
 \le   C (t-s) \int_{\R} | \rho(h)|^{d+k} dh \leq  C(t-s),
 \]
 which provides the desired estimate.

\section{Proof of Theorem~\ref{thm:5}}
\label{sec4}

In this section $X$ will be a self-similar Gaussian process with covariance   (\ref{eq:27}). 
Before we proceed to the proof of  Theorem~\ref{thm:5}, we will show three technical lemmas which  provide   information on  the variance and covariance
of  $X$  under  Hypotheses (H.1) and (H.2).

\subsection{A few useful properties satisfied by $X$}
The first lemma  lemma give the structure of the variance  of an increment of length one, assuming Hypothesis   (H.1).

\begin{lemma}
   \label{lem:2}
   Assuming (H.1), there exists a continuous function $u_1:[0,\infty)\to\R$ such that for $s
   \geq 0$ 
   \begin{equation*}
     \Ex\big[\left( X(s+1)-X(s) \right)^2\big] = 2\lambda s^{2\beta-\alpha}  \left( 1 + u_1(s) \right).
   \end{equation*}
   Furthermore, given $\eta  >0$, there exists a positive constant $C_{\eta}$ such
   that for all $s\geq \eta$ one has
   \begin{equation}
     \label{eq:14}
     \abs{u_1(s)} \leq C_{\eta} \, s^{-\delta_1},\quad
\mbox{where $\delta_1=
\left\{
\begin{array}{ll}   
   1-\alpha&\quad\mbox{if $\alpha<1$}\\
   2-\alpha&\quad\mbox{if $\alpha \ge 1$}
   \end{array}
   \right.
   $}.
      \end{equation}
\end{lemma}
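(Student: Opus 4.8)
The plan is to compute $\Ex[(X(s+1)-X(s))^2]$ exactly in terms of $\phi$ (hence of $\psi$) from the covariance formula (\ref{eq:27}) together with Hypothesis (H.1), to peel off the announced leading term $2\lambda s^{2\beta-\alpha}$, and then to control the remainder by a Taylor expansion in which (H.1)(a)--(c) are used precisely. Concretely, for $s>0$ I would first write, using $\Ex[X(r)^2]=r^{2\beta}\phi(1)$ and $\Ex[X(s)X(s+1)]=s^{2\beta}\phi(1+1/s)$,
\[
  \Ex\big[(X(s+1)-X(s))^2\big] = \big[(s+1)^{2\beta}+s^{2\beta}\big]\phi(1) - 2s^{2\beta}\phi(1+1/s)
\]
(for $s=0$ one has $X(0)=0$ almost surely by self-similarity, so the left-hand side is $\phi(1)=\psi(1)$). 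Inserting (H.1), namely $\phi(1)=\psi(1)$ and $\phi(1+1/s)=-\lambda s^{-\alpha}+\psi(1+1/s)$, this becomes
\[
  \Ex\big[(X(s+1)-X(s))^2\big] = 2\lambda s^{2\beta-\alpha} + g(s),\qquad
  g(s):=\psi(1)\big[(s+1)^{2\beta}+s^{2\beta}\big]-2s^{2\beta}\psi(1+1/s).
\]

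I would then set $u_1(s):=g(s)/\big(2\lambda s^{2\beta-\alpha}\big)$ for $s>0$. Since $\psi$ is twice differentiable on a neighbourhood of $[1,\infty)$, $u_1$ is continuous on $(0,\infty)$; that it extends continuously to $s=0$ is seen directly from the formula for $g$ (in the main case $\alpha=2\beta$ one simply has $u_1(s)=\norm{X(s+1)-X(s)}_{L^2(\Omega)}^2/(2\lambda)-1$, continuous because the covariance of $X$ is).

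The heart of the matter is the decay of $u_1(s)$, equivalently of $g(s)$, as $s\to\infty$. A first-order Taylor expansion of $x\mapsto x^{2\beta}$ gives $(s+1)^{2\beta}-s^{2\beta}=2\beta s^{2\beta-1}+R_1(s)$ with $|R_1(s)|\le C s^{2\beta-2}$ for $s\ge1$, and Taylor's formula with integral remainder gives $\psi(1+1/s)=\psi(1)+\psi'(1)s^{-1}+R_2(s)$ with
\[
  |R_2(s)|=\Big|\int_1^{1+1/s}(1+1/s-u)\,\psi''(u)\,du\Big|
  \le \frac{C}{s}\int_1^{1+1/s}(u-1)^{\alpha-1}\,du
  \le C\,s^{-1-\alpha},
\]
where I used $|\psi''(u)|\le Cu^{-1}(u-1)^{\alpha-1}\le C(u-1)^{\alpha-1}$ from (H.1)(b). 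Substituting both expansions into $g$ yields
\[
  g(s)=2\big(\beta\psi(1)-\psi'(1)\big)s^{2\beta-1}+\psi(1)R_1(s)-2s^{2\beta}R_2(s).
\]
Here the two regimes split. If $\alpha\ge1$, condition (H.1)(c) reads $\psi'(1)=\beta\psi(1)$, so the $s^{2\beta-1}$ term vanishes and $|g(s)|\le C\big(s^{2\beta-2}+s^{2\beta-1-\alpha}\big)\le C s^{2\beta-2}$ for $s\ge1$, whence $|u_1(s)|\le C s^{\alpha-2}$. If $\alpha<1$ that term survives but, since $\alpha>0$, it dominates the other two, so $|g(s)|\le C s^{2\beta-1}$ for $s\ge1$ and $|u_1(s)|\le C s^{\alpha-1}$. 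In both cases this is exactly the asserted bound $|u_1(s)|\le C s^{-\delta_1}$ for $s\ge1$; for $s\in[\eta,1]$ it follows from continuity (hence boundedness) of $u_1$ on that compact interval, after enlarging the constant to $C_\eta$.

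The only real difficulty is bookkeeping: identifying, in each regime, which power of $s$ dominates $g(s)$, and noticing in particular that the cancellation furnished by condition (H.1)(c) is precisely what improves the decay exponent from $\alpha-1$ to $\alpha-2$ when $\alpha\ge1$. The behaviour of $u_1$ near $s=0$ is a minor point, handled as above; in any case $u_1$ enters the later arguments only through its large-argument values, which the estimate above controls.
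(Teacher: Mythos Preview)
Your proof is correct and follows essentially the same route as the paper: the same decomposition $\Ex[(X(s+1)-X(s))^2]=2\lambda s^{2\beta-\alpha}+g(s)$, the same Taylor expansions in $1/s$, and the same crucial use of (H.1)(c) to kill the $s^{2\beta-1}$ term when $\alpha\ge1$. The only difference is that the paper outsources the $s\ge1$ computation to \cite[Lemma~3.1]{HN} and fills in just the range $0<s<1$, whereas you carry out the whole argument explicitly; your version is therefore self-contained. One small caveat: your claim that $u_1$ extends continuously to $s=0$ is only verified in the case $\alpha=2\beta$; when $\alpha<2\beta$ the factor $s^{\alpha-2\beta}$ in $u_1(s)=g(s)/(2\lambda s^{2\beta-\alpha})$ diverges, so the statement of the lemma is to be read somewhat loosely at the origin --- but as you note, this is immaterial for the rest of the paper.
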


\begin{proof}
  If $s \ge 1$, the assertion follows from \cite[Lemma 3.1]{HN}. Let us now assume that
  $0<s<1$. Proceeding as in the proof of \cite[Lemma 3.1]{HN},
  we get that 
  \begin{align*}
    \Ex\big[ \left( X(s+1) - X(s) \right)^2\big]
    &=
      2\lambda s^{2\beta-\alpha}
      + \psi(1) \left( \left( s+1 \right)^{2\beta} - s^{2\beta} \right)
      - 2s^{2\beta} \int_1^{1+\frac{1}{s}} \psi'(y) dy
    \\ &=
         2\lambda s^{2\beta-\alpha} \left( 1 + u_1(s) \right),
  \end{align*}
  where
  \begin{equation*}
    u_1(s)
    = { (2\lambda)^{-1}}
     \psi(1) s^{\alpha-2\beta} \left( \left( s + 1 \right)^{2\beta} - s^{2\beta} \right)
      -  { \lambda^{-1} } s^{\alpha} \int_1^{1+\frac{1}{s}} \psi'(y) dy.
    \end{equation*}
  Then the bound~\eqref{eq:14} for $\eta \le s<1$ follows  immediately from the fact that 
  $u_1(s)$ is bounded for $\eta \le s<1$. 
      \end{proof} 

In the  next two lemmas  we will show formulas and estimates for the covariance  $\Ex[( X(t+1) - X(t) ) ( X(s+1) - X(s) )]$ in two different situations. First, we will assume Hypothesis (H.1) and consider the case where  $|t-s| \le M_1 (s\wedge t) +M_2$ for some constants $M_1$ and $M_2$, and  the second lemma  will handle the case   $|t-s| \ge (c-1) (s\wedge t) +c$  under Hypotheses (H.1) and (H.2), where $c$ is the constant appearing in (H.2).

\begin{lemma}
   \label{lem:1}
   Assume (H.1) and let $\eta > 0$. Then, for all $s,t>\eta$ satisfying $\eta \leq \abs{s-t}\le M_1
   \left( s \land t \right) + M_2$ for some positive constants $M_1$, $M_2$, it holds that
   \begin{equation*}
     \Ex\big[
       \left( X(t+1) - X(t) \right)
       \left( X(s+1) - X(s) \right)
     \big]
     =
     \lambda \left( s \land t \right)^{2\beta-\alpha}
     \left(
      {2} a_{\alpha}(s-t) + u_2(s,t)
     \right),
   \end{equation*}
   where $a_{\alpha}(h)$ is the function defined in (\ref{eq:6})
   and $u_2 :[0,\infty) \rightarrow \R$ is a continuous function satisfying the bounds
   \begin{equation}
     \label{eq:19}
     \abs{u_2(s,t)} \leq C
     \left(
       \left( s \land t \right)^{-1}  \abs{s-t}^{\alpha-1}
       +
       \left( s \land t \right)^{\alpha-2}
     \right) \quad  {\rm if} \quad  |s-t| \ge 1
   \end{equation}
  and
   \begin{equation}
     \label{eq:19a}
     \abs{u_2(s,t)} \leq C
     \left(
       \left( s \land t \right)^{\alpha -1}  \mathbf{1}_{\{\alpha <1\}}
       +
       \left( s \land t \right)^{\alpha-2}  \mathbf{1}
       _{\{ \alpha \ge 1 \}}
     \right) \quad  {\rm if} \quad  |s-t| <1.
   \end{equation}
\end{lemma}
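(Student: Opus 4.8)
The plan is to reduce the statement about increments of the non-stationary process $X$ to a perturbation of the corresponding computation for the fractional Brownian motion of index $\alpha/2$, whose analogous increment covariance is precisely the function $a_\alpha$. Starting from the covariance representation (\ref{eq:27}), one writes
\[
\Ex\big[(X(t+1)-X(t))(X(s+1)-X(s))\big]
\]
as an alternating sum of four terms of the form $\Ex[X(a)X(b)]=(a\wedge b)^{2\beta}\phi((a\vee b)/(a\wedge b))$, and then substitutes the decomposition $\phi(x)=-\lambda(x-1)^\alpha+\psi(x)$ from (H.1). The $-\lambda(x-1)^\alpha$ part, after pulling out a factor $(s\wedge t)^{2\beta-\alpha}$, produces exactly $2\lambda(s\wedge t)^{2\beta-\alpha}a_\alpha(s-t)$ (this is the same algebra that gives the fBm increment covariance $a_{2H}$ in the introduction, here with $2H$ replaced by $\alpha$); the $\psi$ part, together with the mismatch between the exponents $2\beta$ and $2\beta-\alpha$, is collected into the remainder term $\lambda(s\wedge t)^{2\beta-\alpha}u_2(s,t)$.

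Next I would estimate $u_2$. Assume by symmetry $s\le t$, so $s\wedge t=s$, and set $h=t-s$. The remainder is a combination of finite differences of $\psi$ evaluated near the arguments $1$, $1+h/s$, $1+(h+1)/s$, $1+(h-1)/s$, plus terms coming from $\psi(1)\big((s+1)^{2\beta}-s^{2\beta}\big)$-type expressions and the factor $s^{\alpha-2\beta}$ needed to renormalise. Using the mean value theorem together with the bounds (a) $|\psi'(x)|\le Cx^{\alpha-1}$ and (b) $|\psi''(x)|\le Cx^{-1}(x-1)^{\alpha-1}$, one controls second-order finite differences of $\psi$ by $\sup |\psi''|$ on the relevant interval times the square of the increment, and first-order ones by $\sup|\psi'|$ times the increment. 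When $|s-t|=h\ge 1$ this yields the two contributions in (\ref{eq:19}): a term of order $s^{-1}h^{\alpha-1}$ (from the $\psi''$-bound applied to the second difference across the two increments, whose separation is of order $h/s$ in the rescaled variable) and a term of order $s^{\alpha-2}$ (from the lower-order corrections involving $\psi'$ and the $(s+1)^{2\beta}-s^{2\beta}$ discrepancy). When $h<1$ one argues slightly differently, splitting according to $\alpha<1$ or $\alpha\ge 1$, using condition (c) $\psi'(1)=\beta\psi(1)$ in the case $\alpha\ge 1$ to cancel the otherwise-dominant first-order term, which leads to the two alternatives in (\ref{eq:19a}). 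Throughout, the hypothesis $\eta\le|s-t|\le M_1(s\wedge t)+M_2$ guarantees that the rescaled arguments stay in a fixed compact region bounded away from the singularity of $(x-1)^{\alpha-1}$ except at the point $1$ itself, so that the constant $C$ can be chosen uniformly (depending on $\eta$, $M_1$, $M_2$).

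The main obstacle I anticipate is the case $|s-t|<1$ together with $\alpha\ge 1$: there the naive first-order estimate for the difference of $\psi$ across an interval of length $\sim 1/s$ gives only $s^{\alpha-1}$, which does not match the claimed $s^{\alpha-2}$, so one genuinely needs to exploit the normalisation condition $\psi'(1)=\beta\psi(1)$ to see a cancellation between the $\psi$-contribution and the term coming from $\psi(1)\big((s+1)^{2\beta}-s^{2\beta}\big)$, after which a second-order Taylor expansion and the bound on $\psi''$ give the extra power of $s^{-1}$. Keeping careful track of which terms are genuinely second order and making sure the $(x-1)^{\alpha-1}$ factor in the $\psi''$-bound is harmless (since the relevant arguments are bounded away from $1$ by $\sim h/s$ or by a fixed constant) is where most of the bookkeeping effort goes; everything else is the same kind of elementary Taylor-expansion estimate as in \cite[Lemma 3.1]{HN}, extended here to cover the regime $s\wedge t$ small and $|s-t|$ not large.
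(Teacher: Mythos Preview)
Your proposal is correct and follows essentially the same route as the paper: write the increment covariance via self-similarity, substitute $\phi(x)=-\lambda(x-1)^\alpha+\psi(x)$, pull out $\lambda(s\wedge t)^{2\beta-\alpha}\cdot 2a_\alpha(s-t)$ and estimate the remainder $u_2$ by Mean Value/Taylor arguments using the bounds on $\psi'$ and $\psi''$, splitting into the cases $|s-t|\ge 1$ and $|s-t|<1$. You have also correctly singled out the genuine difficulty, namely that for $|s-t|<1$ and $\alpha\ge 1$ a first-order estimate only gives $s^{\alpha-1}$, and that one must expand to second order and invoke the normalisation $\psi'(1)=\beta\psi(1)$ to kill the $1/s$ term and reach $O(s^{\alpha-2})$---this is exactly what the paper does via a Taylor expansion in $1/s$.
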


 \begin{proof} 
  Assume without loss of generality that $t > s$, so that $t=s+h$ for some
  $h>0$. Then the assertion becomes
   \begin{equation*}
       \Ex\big[\left( X(s+h+1) - X(s+h) \right) \left( X(s+1)-X(s) \right)\big]
       =  s^{2\beta-\alpha} \lambda \left( 2a_\alpha (h) + u_{2}(s,s+h)  \right),
     \end{equation*}
     where $u_2(s,s+h)$ satisfies the bounds
     \begin{equation}
       \label{eq:24}
       \abs{u_2(s,s+h)} \leq C
     \left(
       s^{-1}  h^{\alpha-1}
       +
       s^{\alpha-2}
     \right)
     \end{equation}
     for all $s,h$ such that $s \ge \eta$ and $1 \le h \leq M_1 \, s + M_2$ and
     \begin{equation}
       \label{eq:24a}
       \abs{u_2(s,s+h)} \leq C
     \left(
       s^{\alpha-1}   \mathbf{1}_{\{ \alpha <1\}}
       +
       s^{\alpha-2} \mathbf{1}_{\{\alpha \ge 1\}}
     \right)
     \end{equation}
     for all  $s,h$ such that $s \ge \eta$ and $\eta \le h <1$.
     
     Let us first show the claim  (\ref{eq:24a}).   In this case,
  \begin{align*}
&   \Ex\big[\left( X(s+h+1)-X(s+h) \right)\left( X(s+1) - X(s) \right)\big]  \\
      &\qquad =   \left( s+1 \right)^{2\beta} \phi \left( \frac{s+h+1}{s+1} \right) 
  -
      \left( s+h \right)^{2\beta} \phi \left( \frac{s+1}{s+h} \right) \\
    &  \qquad   \qquad    -
      s^{2\beta} \phi \left( \frac{s+h+1}{s} \right)
      +
      s^{2\beta} \phi \left( \frac{s+h}{s} \right) \\
     & \qquad =
         - \lambda
         \left(
         \left( s+1 \right)^{2\beta-\alpha} h^{\alpha}
         -
         \left( s+h \right)^{2\beta-\alpha} \left( 1-h \right)^{\alpha}
         -
         s^{2\beta-\alpha} \left( h+1 \right)^{\alpha}
         +
         s^{2\beta-\alpha} h^{\alpha}
         \right)
         \\ & \qquad \qquad+
         \left( s+1 \right)^{2\beta} \psi \left( \frac{s+h+1}{s+1} \right)
         -
         \left( s+h \right)^{2\beta} \psi \left( \frac{s+1}{s+h} \right) \\
         &\qquad \qquad 
         -
         s^{2\beta} \psi \left( \frac{s+h+1}{s} \right)
         +
              s^{2\beta} \psi \left( \frac{s+h}{s} \right)
    \\ & \qquad =
          s^{2\beta-\alpha} \left( 2\lambda a_\alpha (h) + u_2(s,s+ h) \right),
  \end{align*}
where
\begin{align*}
  u_2(s,s+h) &= \notag
             \left( 1 - \left( 1 + \frac{1}{s} \right)^{2\beta-\alpha} \right) h^{\alpha}
             +
             \left(
             \left( 1 + \frac{h}{s} \right)^{2\beta-\alpha}
             -
             1
             \right)
             \left( 1-h \right)^{\alpha}
  \\ & \qquad + \notag
       s^{\alpha}
       \left(
       \left( 1 + \frac{1}{s} \right)^{2\beta}
       \psi \left( 1+ \frac{h}{s+1} \right)
       -
       \left( 1 + \frac{h}{s} \right)^{2\beta}
       \psi \left( 1 + \frac{1-h}{s+h} \right)
       \right.
  \\ & \qquad \left. - \notag
       \psi \left( 1 + \frac{h+1}{s} \right)
       +
       \psi \left( 1 + \frac{h}{s} \right) \right)
  \\ &  = 
             \left( 1 - \left( 1 + \frac{1}{s} \right)^{2\beta-\alpha} \right) h^{\alpha}
             +
             \left(
             \left( 1 + \frac{h}{s} \right)^{2\beta-\alpha}
             -
             1
             \right)
       \left( 1-h \right)^{\alpha}
       +
       s^{\alpha} v(s,h).
\end{align*} 
For the first part on the right-hand side, we have, using  the Mean Value Theorem,
\begin{align*}
&  \abs{
                 \left( 1 - \left( 1 + \frac{1}{s} \right)^{2\beta-\alpha} \right) h^{\alpha}
             +
             \left(
             \left( 1 + \frac{h}{s} \right)^{2\beta-\alpha}
             -
             1
             \right)
       \left( 1-h \right)^{\alpha}
     }\\
     & \qquad \leq  C \left(  s^{-1} h^{\alpha}+ (h/s)^{-1} (1-h)^\alpha \right)  \le C s^{-1}
\end{align*}
and we obtain the desired inequality. 

For $v(s,h)$, we first treat the case $\alpha<1$. In this case, it follows
straightforwardly from the Mean Value Theorem that $\abs{v(s,h)} \leq C s^{-1}$, which yields
$ s^\alpha \abs{v(s,h)} \leq C s^{\alpha-1}$.  
In the case $\alpha \geq 1$, a Taylor expansion in $s^{-1}$ around $0$ yields that  
\begin{align*}
  v&(s,h)
  \\ &=
       \left( 1 + \frac{1}{s} \right)^{2\beta}
       \psi \left( 1+ \frac{h}{s+1} \right)
       -
       \left( 1 + \frac{h}{s} \right)^{2\beta}
       \psi \left( 1 + \frac{1-h}{s+h} \right)
   -
       \psi \left( 1 + \frac{h+1}{s} \right)
       +
         \psi \left( 1 + \frac{h}{s} \right)
  \\ &=
       \left( 1 + 2\beta \frac{1}{s} + O \left( \frac{1}{s^2} \right) \right)
       \left(
       \psi(1) + \psi'(1) \frac{h}{s+1} + O \left( \frac{1}{s^2} \right)
       \right)
       \\ & \qquad -
            \left(
            1 + 2\beta \frac{h}{s} + O \left( \frac{1}{s^2} \right)
            \right)
            \left(
            \psi(1)
            +
            \psi'(1) \frac{1-h}{s+h} + O \left( \frac{1}{s^2} \right)
            \right)
  \\ &\qquad -
        \psi(1) - \psi'(1) \frac{h+1}{s} - O \left( \frac{1}{s^2} \right)
\\ &\qquad +
     \psi(1) + \psi'(1) \frac{h}{s} + O \left(  \frac{1}{s^2} \right)
\\     &=
       \psi(1) 2\beta \frac{1}{s} + \psi'(1) \frac{h}{s}
       - \psi'(1) \frac{1-h}{s}
       - \psi(1) 2\beta \frac{h}{s}
\\ &\qquad       - \psi'(1) \frac{h+1}{s}
     + \psi'(1) \frac{h}{s}
     + O \left( \frac{1}{s^2} \right)
  \\ &=
       2 (1-h) \left( \beta \psi(1) - \psi'(1) \right) \frac{1}{s}
       +
       O \left( \frac{1}{s^2} \right)
  \\ &=
       O \left( \frac{1}{s^2} \right),
\end{align*}
where we have used that $\beta \psi(1)=\psi'(1)$ to derive the last equality. This
yields~\eqref{eq:24a}.

Let us now show the claim   (\ref{eq:24}).  In this case, we have
  \begin{align*}
  &  \Ex\big[\left( X(s+h+1)-X(s+h) \right)\left( X(s+1) - X(s) \right)\big] \\
    &\qquad =
      \left( s+1 \right)^{2\beta}
      \left( 
      \phi \left( \frac{s+h+1}{s+1} \right)
      -
      \phi \left( \frac{s+h}{s+1} \right)
       \right)
   \\ &\qquad \qquad    -
      s^{2\beta} \phi \left( \frac{s+h+1}{s} \right)
      +
      s^{2\beta} \phi \left( \frac{s+h}{s} \right)
    \\ & \qquad =
         - \lambda
         \left(
         \left( s+1 \right)^{2\beta-\alpha}
         \left( h^{\alpha}
         -
         \left( h-1 \right)^{\alpha}
         \right)
         -
         s^{2\beta-\alpha} \left( \left( h+1 \right)^{\alpha}
         -
         h^{\alpha}
         \right) \right)
         \\ &\qquad \qquad +
              \left( s+1 \right)^{2\beta}
              \left( 
              \psi \left( \frac{s+h+1}{s+1} \right)
         -
               \psi \left( \frac{s+h}{s+1} \right)
               \right)
         \\ &\qquad \qquad -
              s^{2\beta}
              \left( 
              \psi \left( \frac{s+h+1}{s} \right)
         -
              \psi \left( \frac{s+h}{s} \right)
               \right)
    \\ & \qquad =
          s^{2\beta-\alpha} \left(2 \lambda a_\alpha(h) + u_2(s,s+h) \right),
  \end{align*}
where
\begin{align*}
  u_2(s,s+h) &=
             \left( 1 - \left( 1 + \frac{1}{s} \right)^{2\beta-\alpha} \right)
             \left( 
             h^{\alpha}
             -
             \left( h-1 \right)^{\alpha}
             \right)
\\ & \qquad +
     s^{\alpha}
     \left( 
     \left( 1 + \frac{1}{s} \right)^{2\beta}
     \left( 
       \psi \left( 1+ \frac{h}{s+1} \right)
       -
       \psi \left( 1 + \frac{h-1}{s+1} \right)
       \right) \right.
  \\ & \qquad \left.
       -
       \psi \left( 1 + \frac{h+1}{s} \right)
       +
       \psi \left( 1 + \frac{h}{s} \right) \right)
  \\ &=
             \left( 1 - \left( 1 + \frac{1}{s} \right)^{2\beta-\alpha} \right)
             \left( 
             h^{\alpha}
             -
             \left( h-1 \right)^{\alpha}
       \right)
       +
       s^{\alpha} w(s,h).
\end{align*}
By the Mean Value Theorem, we have that
\begin{equation*}
  \abs{
               \left( 1 - \left( 1 + \frac{1}{s} \right)^{2\beta-\alpha} \right)
             \left( 
             h^{\alpha}
             -
             \left( h-1 \right)^{\alpha}
             \right)
           }
           \leq C s^{-1} h^{\alpha-1},
\end{equation*}
which gives the desired estimate.
Furthermore,
\begin{align*}
   w(s,h)  &=
                \left( 1 + \frac{1}{s} \right)^{2\beta}
     \left( 
       \psi \left( 1+ \frac{h}{s+1} \right)
       -
       \psi \left( 1 + \frac{h-1}{s+1} \right)
           \right)
              -
       \psi \left( 1 + \frac{h+1}{s} \right)
       +
       \psi \left( 1 + \frac{h}{s} \right)
  \\ &=
       \left( 1 + \frac{1}{s} \right)^{2\beta}
       \int_{\frac{h-1}{s+1}}^{\frac{h}{s+1}} \psi'(1+y) dy
       -
       \int_{\frac{h}{s}}^{\frac{h+1}{s}} \psi'(1+y) dy
  \\ &=
       \left(
       \left( 1 + \frac{1}{s} \right)^{2\beta} - 1
       \right)
       \int_{\frac{h-1}{s+1}}^{\frac{h}{s+1}} \psi'(1+y) dy
       +
       \int_{\frac{h-1}{s+1}}^{\frac{h}{s+1}} \psi'(1+y) dy
       -
            \int_{\frac{h}{s}}^{\frac{h+1}{s}} \psi'(1+y) dy
  \\ &=
       \left(
       \left( 1 + \frac{1}{s} \right)^{2\beta} - 1
       \right)
       \int_{\frac{h-1}{s+1}}^{\frac{h}{s+1}} \psi'(1+y) dy
       \\ & \qquad 
       +
       \int_{0}^{\frac{1}{s+1}} \psi'\left(1+\frac{h-1}{s+1}+y\right) dy
       -
            \int_{1}^{\frac{1}{s}} \psi'\left(1+ \frac{h}{s} + y \right) dy
  \\ &=
       \left(
       \left( 1 + \frac{1}{s} \right)^{2\beta} - 1
       \right)
       \int_{\frac{h-1}{s+1}}^{\frac{h}{s+1}} \psi'(1+y) dy
       \\ & \qquad 
       +
            \int_{0}^{\frac{1}{s+1}}
            \left( 
            \psi'\left(1+\frac{h-1}{s+1}+y\right)
            -
            \psi'\left(1+ \frac{h}{s} + y \right) 
             \right)
            dy
            \int_{\frac{1}{s+1}}^{\frac{1}{s}} \psi'\left(1+ \frac{h}{s} + y \right) dy.
\end{align*}
Therefore, using the bounds on the derivatives of $\psi$ given by (H.1) and the fact that $h\le M_1 s+M_2$, we get that
\begin{equation*}
  \abs{w(s,h)}
  \leq
    C \left( s^{-2} + s^{-1-\alpha} h^{\alpha-1} \right).
\end{equation*}
This finishes the proof.
\end{proof}

 If $X$ is fBm with Hurst parameter $H\in(0,1)$, we have that
  \begin{equation*}
         \Ex\big[
       \left( X(t+1) - X(t) \right)
       \left( X(s+1) - X(s) \right)
     \big]
     =
     a_{2H}(s-t).
  \end{equation*}
Therefore, heuristically speaking, Lemma~\ref{lem:1} expresses that a process
  $X$ satisfying (H.1) is a ``perturbed''  fBm with Hurst
  parameter $\beta=\alpha/2$. 

For later reference, let us record here that the function $a_{\alpha}$ defined
in~\eqref{eq:6} has the asymptotics
\begin{equation}
  \label{eq:56}
  a_{\alpha}(h) = \frac{1}{2} \alpha (\alpha-1) \abs{h}^{\alpha-2} + o \left( \abs{h}^{\alpha-2} \right)
\end{equation}
as $\abs{h}\to \infty$. In particular, if $\abs{h} > \eta$, there exists a constant
$C_{\eta}$ such that 
\begin{equation}
  \label{eq:61}
  \abs{a(h)} \leq C_{\eta} \abs{h}^{\alpha-2}.
\end{equation}

Hypothesis (H.2) implies the following bound for the covariance.

\begin{lemma}
  \label{lem:3}
   Let $s,t > 0$ such that $s\wedge t \ge \eta>0$ and $\abs{s-t} \geq  (c-1)(s \land t) + c $, where $c$ is the constant appearing in hypothesis  (H.2). Then, assuming 
   (H.2), there exists a constant $C_\eta>0$ (not depending on  $s$ or $t$), such
   that 
   \begin{equation}
     \label{eq:9}
     \abs{\Ex\big[\left( X(s+1)-X(s) \right) \left( X(t+1)-X(t) \right)\big]}
     \leq
 C_\eta
    \begin{cases}
       (s \land t)^{2\beta+\nu-2} \abs{s-t}^{-\nu} & \text{if $\alpha<1$} \\
       (s \land t)^{2\beta-\alpha} \abs{s-t}^{\alpha-2} &  \text{if $\alpha \ge 1$}
     \end{cases},
   \end{equation}
   and the exponent $\nu$ is defined in hypothesis (H.2).
    \end{lemma}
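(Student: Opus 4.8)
The plan is to write the covariance as a second mixed difference of $\Ex[X(\cdot)X(\cdot)]$ and then use the self-similarity relation~\eqref{eq:27} together with the decay bounds on $\phi'$ and $\phi''$ from (H.2). Assume without loss of generality $t > s$, and write $t = s + h$ with $h = \abs{s-t} \geq (c-1)s + c$, which is exactly the condition guaranteeing that the arguments $\frac{s+h+1}{s}$, $\frac{s+h}{s}$, $\frac{s+h+1}{s+1}$, $\frac{s+h}{s+1}$ all lie in $[c,\infty)$, so that (H.2)(d)--(e) apply. Starting from
\[
\Ex\big[(X(s+h+1)-X(s+h))(X(s+1)-X(s))\big]
= \left(s+1\right)^{2\beta}\!\!\left(\phi\!\left(\tfrac{s+h+1}{s+1}\right) - \phi\!\left(\tfrac{s+h}{s+1}\right)\right) - s^{2\beta}\!\left(\phi\!\left(\tfrac{s+h+1}{s}\right) - \phi\!\left(\tfrac{s+h}{s}\right)\right),
\]
I would first handle each inner difference by the fundamental theorem of calculus, $\phi(b) - \phi(a) = \int_a^b \phi'(y)\,dy$, over an interval of length $\frac{1}{s+1}$ or $\frac{1}{s}$ respectively. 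This already gives a crude bound $\abs{\cdots} \leq C\,s^{2\beta} s^{-1}\,(h/s)^{-\nu}$ in the case $\alpha < 1$ (and the analogous $s^{2\beta}s^{-1}(h/s)^{\alpha-2}$ when $\alpha\geq 1$), but this is off by the right power: one must exploit a \emph{cancellation} between the two terms.

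The key step is therefore to recognize the full expression as a genuine second-order mixed difference and extract the extra gain. Concretely, rewrite it as
\[
\int_0^1 \int_0^1 \partial_\theta \partial_\tau \, g(\theta, \tau)\, d\theta\, d\tau
\quad\text{with}\quad
g(\theta,\tau) = (s+\tau)^{2\beta}\,\phi\!\left(\tfrac{s+h+\theta}{s+\tau}\right),
\]
since $g(1,1) - g(0,1) - g(1,0) + g(0,0)$ is precisely the displayed covariance after matching arguments (one checks $g(0,0) = s^{2\beta}\phi(\frac{s+h}{s})$, $g(1,0) = s^{2\beta}\phi(\frac{s+h+1}{s})$, $g(0,1) = (s+1)^{2\beta}\phi(\frac{s+h}{s+1})$, $g(1,1) = (s+1)^{2\beta}\phi(\frac{s+h+1}{s+1})$, so the sum with signs equals $-(\text{covariance})$ up to sign bookkeeping). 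The mixed partial $\partial_\theta\partial_\tau g$ pulls down two factors of roughly $(s+\tau)^{-1}$ together with a $\phi''$ term (and lower-order $\phi'$ terms from differentiating the prefactor $(s+\tau)^{2\beta}$ and the chain-rule factor $\frac{s+h+\theta}{s+\tau}$). On the region of integration, $\tfrac{s+h+\theta}{s+\tau}$ ranges within a bounded multiple of $\tfrac{h}{s}$, so one substitutes the (H.2) bounds $\abs{\phi''(x)} \leq C x^{-\nu-1}$ and $\abs{\phi'(x)} \leq C x^{-\nu}$ (resp.\ $x^{\alpha-3}$ and $x^{\alpha-2}$). Collecting powers: in the case $\alpha < 1$ one gets $s^{2\beta} \cdot s^{-2} \cdot (h/s)^{-\nu-1} \cdot (1/s) \sim s^{2\beta+\nu-2} h^{-\nu}$ after carefully accounting for the derivative of the argument $\tfrac{s+h+\theta}{s+\tau}$ in $\tau$, which contributes a factor $\tfrac{h}{s^2}$ rather than $\tfrac1s$ — this is exactly where the exponent $\nu$ (instead of $\nu+1$) on $\abs{s-t}$ and the exponent $2\beta+\nu-2$ on $s\wedge t$ come from. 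The case $\alpha \geq 1$ is identical with $-\nu$ replaced by $\alpha - 2$ throughout, giving $s^{2\beta-\alpha}\abs{s-t}^{\alpha-2}$.

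The main obstacle, and the only genuinely delicate point, is the precise bookkeeping of powers of $s$ versus powers of $h$ when differentiating the composite argument $\tfrac{s+h+\theta}{s+\tau}$: a naive bound treats $\partial_\tau$ of this quantity as $O(1/s)$, which would give the wrong (too weak) power $2\beta - 1 + \nu - \nu = 2\beta - 1$ rather than $2\beta + \nu - 2$; one must instead observe that $\partial_\tau \tfrac{s+h+\theta}{s+\tau} = -\tfrac{s+h+\theta}{(s+\tau)^2}$ has size $\tfrac{h}{s^2}$, and that this extra factor of $\tfrac{h}{s}$ combines with the $h^{-\nu-1}$ from $\phi''$ (evaluated at a point of size $h/s$, hence contributing $(h/s)^{-\nu-1}$) to produce the advertised $h^{-\nu}$. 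Once this is tracked correctly, everything else is a routine application of the Mean Value Theorem and the triangle inequality, and the constant $C_\eta$ absorbs the lower-order terms coming from $\phi'$ and from differentiating the prefactor $(s+\tau)^{2\beta}$; the restriction $s \wedge t \geq \eta$ is only needed to keep these prefactor ratios like $(1 + \tfrac1s)^{2\beta}$ bounded.
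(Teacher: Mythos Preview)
Your proposal is correct and follows essentially the same approach as the paper: both recognize the covariance as a second mixed difference and bound it using the decay of $\phi'$ and $\phi''$ from (H.2), with the argument $\tfrac{s+h+\theta}{s+\tau}\asymp h/s$ on the relevant range. The only cosmetic difference is packaging: the paper splits off the prefactor difference $((t+1)^{2\beta}-t^{2\beta})$ and then applies the Mean Value Theorem iteratively to the remaining second difference of $\phi$, whereas you write the whole thing as $\int_0^1\!\int_0^1 \partial_\theta\partial_\tau\big[(s+\tau)^{2\beta}\phi(\tfrac{s+h+\theta}{s+\tau})\big]\,d\theta\,d\tau$ and bound the integrand directly---the resulting $\phi'$ and $\phi''$ terms and their power counts are identical.
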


 \begin{proof}
   Without loss of generality, we assume that $s \geq t$ so that $\abs{t-s} \geq (c-1) s
   \land t + c $ translates into $s \geq c(t+1)$.
    As $s \geq t$, we have by self-similarity that
   \begin{align*}
  &   \Ex\big[\left( X(s+1)-X(s) \right) \left( X(t+1)-X(t) \right)\big]\\
      &\qquad =
       \left( t+1 \right)^{2\beta} \left(
       \phi \left( \frac{s+1}{t+1} \right) - \phi \left( \frac{s}{t+1} \right)
       \right)
       -
       t^{2\beta}
       \left(
       \phi \left( \frac{s+1}{t} \right) - \phi \left( \frac{s}{t} \right)
       \right)
     \\ & \qquad =
          \left(
          \left( t+1 \right)^{2\beta} - t^{2\beta}
          \right)
          \left(
          \phi \left( \frac{s+1}{t+1} \right)
          -
          \phi \left( \frac{s}{t+1} \right)
       \right)
     \\ &\qquad \qquad +
       t^{2\beta}
          \left(
          \phi \left(  \frac{s+1}{t+1} \right)
          -
          \phi \left( \frac{s}{t+1} \right)
          -
          \phi \left( \frac{s+1}{t} \right)
          +
          \phi \left( \frac{s}{t} \right)
       \right).         
   \end{align*}
As $s \geq c (t+1)$, we have that $s/(t+1)\geq c$ and therefore, by (H.2), for each
   $x \in \left[ \frac{s}{t+1}, \frac{s+1}{t+1} \right]$,
   \begin{equation}
     \label{eq:11}
     \abs{\phi'(x)} \leq C
     \begin{cases}
       t^{\nu} \left( s-t \right)^{-\nu} &\qquad \text{if $\alpha < 1$} \\
       t^{2-\alpha} \left( s-t \right)^{\alpha-2} &\qquad \text{if $\alpha \geq 1$}
     \end{cases}
\end{equation}
and, for each $x \in \left[ \frac{s}{t+1}, \frac{s+1}{t} \right]$,
\begin{equation}
  \label{eq:10}
  \abs{\phi ''(x)} \leq C
  \begin{cases}
    t^{\nu + 1} \left( s-t \right)^{-\nu-1} &\qquad \text{if $\alpha <1$} \\
    t^{3-\alpha} \left( s-t \right)^{\alpha-3} & \qquad \text{if $\alpha \geq 1$}
  \end{cases}.
\end{equation}
This yields the assertion, as by the Mean Value Theorem,
\begin{equation*}
  \phi \left(  \frac{s+1}{t+1} \right) - \phi \left( \frac{s}{t+1} \right)
  =
  \frac{1}{t+1} \phi'(x_1)
\end{equation*}
and
\begin{align*}
          \phi \left(  \frac{s+1}{t+1} \right)
          -
          \phi \left( \frac{s}{t+1} \right)
          &-
          \phi \left( \frac{s+1}{t} \right)
          +
          \phi \left( \frac{s}{t} \right)
\\   &=
    \frac{1}{t+1} \phi'(x_2) - \frac{1}{t} \phi'(x_3)
  \\ &=
       \frac{1}{t+1} \left( \phi'(x_2) - \phi'(x_3)  \right)
       +
       \left( \frac{1}{t+1} - \frac{1}{t} \right) \phi'(x_3)
  \\ &=
       \frac{1}{t+1} \left( x_2 - x_3 \right) \phi''(x_4)
       -
       \frac{1}{t^2+t} \phi'(x_3),
\end{align*}
where the $x_i$ are some appropriate values in the correct intervals
for~\eqref{eq:11} and~\eqref{eq:10} to hold.  
\end{proof}

We can now proceed to the proof of Theorem \ref{thm:5}. 
In this section  $\mathfrak{H}$ will denote the Hilbert space defined as the closure of the set of step functions with respect to the scalar product
$\langle {\bf 1}_{[0,t]},{\bf 1}_{[0,s]}\rangle_\mathfrak{H} = \E[X(s)X(t)]$, $s,t\geq 0$ and, as before, we can  consider that $X$ as an isonormal Gaussian process indexed by $\HH$ ,  and defined on a probability space $(\Omega, \mathcal{F} , \mathbb{P})$. We will assume that $\mathcal{F}$ is generated by $X$.

First we will  prove the convergence of the finite dimensional
distributions of $F_\e$, separately in the two cases $\alpha< 2-\frac1d$ and
$\alpha= 2-\frac1d$  and later we will show tightness.

\subsection{Convergence of  finite-dimensional distributions: the case $\alpha < 2 - \frac 1d$}
 Fix an integer $p \ge
2$, choose times $0<t_1<\dots<t_p < \infty$,
and consider  the random vector $G^\e=(F_\e(t_1),\ldots, F_\e(t_p))$,
where  $F_\e$ has been defined in  (\ref{eq:2a}). We will make use of the notation
\[
\xi_t= \norm{X(t+1)-X(t)}_{L^2(\Omega)}
\]
and
\begin{equation}  \label{Phi}
\Phi(s,t)= \Ex{ [Y_1(s)Y_1(t)] } = \xi_s^{-1} \xi_t^{-1} \Ex{ [\Delta_1 X(s) \Delta_1 X(t) ] }.
\end{equation}
The chaotic expansion of $F_\e(t)$ 
is given by
\begin{equation}
  \label{eq:3}
  F_\e(t)=\sum_{q=d}^\infty I_q(g^\e_{t,q}),
\end{equation}
where, for each $t> 0$,
$$
g^\e_{t,q}=c_q\,\sqrt{\e}\int_0^{ t/\e}\xi_{u}^{-q}\,\partial_{u}^{\otimes q}\,du,
$$
and
$\partial_{u}={\bf 1}_{[u,u+1]}$. We will denote by $F_{q,\e} (t)= I_q(g^\e_{t,q})$ the projection of   $F_\e(t)$  on the $q$th Wiener chaos.

  We are now going to check that assumptions (a), (b) (c) and (d) of 
 Theorem  ~\ref{thm:4}
are satisfied by the family of $p$-dimensional vectors $G^\e$.

\medskip
\noindent
\textit{Proof of condition (a)}. Lemma~\ref{lem:4}
implies that,  for every $q \geq d$ and   for every $i,j\in\{1,\ldots,p\}$,  $q! \langle g^\e_{t_i,q},g^\e_{t_j,q} \rangle _{\HH^{\otimes q}} \to \sigma^2_{\alpha,q} (t_i \wedge t_j) $ 
as $\e\to 0$, where $\sigma^2_{\alpha,q}$ is given by~\eqref{eq:23}.

\medskip
\noindent
\textit{Proof of condition (b)}. This is straightforward.

\medskip
\noindent
\textit{Proof of condition (c)}. We have to show that for $r=1,2,\dots,q-1$ and for all $T>0$,
\begin{equation}
  \label{eq:74}
  \lim_{\e \to  0} \norm{  g^\e_{T,q} \otimes_r g^\e_{T,q}}_{\mathfrak{H}^{\otimes 2(q-r)}}^2 = 0.
\end{equation}
Using the notation (\ref{Phi}), we see that
\begin{eqnarray*}
  g^\e_{T,q} \otimes_r g^\e_{i,q}
 & =&  c_q^2\e
  \int_0^{  T/\e} \int_0^{  T/\e}  \xi_s^{-q}  \xi_t^{-q} \left\langle \partial_s, \partial _t \right\rangle_{\HH}^r \partial_s^{\otimes(q-r)}
  \otimes  \partial_t^{\otimes(q-r)} dsdt\\
   &=&
   c_q^2\e
  \int_0^{  T/\e} \int_0^{  T/\e}  \xi_s^{-(q-r)}  \xi_t^{-(q-r)}
  \Phi^r(s,t)
   \partial_s^{\otimes(q-r)} \otimes
   \partial_t^{\otimes(q-r)} dsdt.
\end{eqnarray*}
Therefore,
\begin{equation}
  \label{eq:53}
  \norm{  g^\e_{T,q} \otimes_r g^\e_{T,q}}_{\mathfrak{H}^{\otimes 2(q-r)}}^2
\\  =
c_q^4 \e ^2
       \int_{(0,   T/\e)^4}
       \Phi^r (s,t) \Phi^r(l,m) \Phi^{q-r} (s,l)  \Phi^{q-r} (t,m) 
       dsdtdldm.
\end{equation}

We claim that
\begin{equation}
  \label{eq:54}
 \sup_{\e>0} \e
  \int_0^{ T/ \e} \int_0^{ T/ \e}
  |\Phi^q(s,t)|
  dsdt
  \le C,
\end{equation}
where $C$ is some constant not depending on $q$ or $\e$.
Taking into account that $  |\Phi(s,t)| \le 1$, it suffices to show that
\[
 \sup_{\e>  0}  \e  
  \int_0^{ T/ \e} \int_0^{T/ \e}
  |\Phi^d(s,t)|
  dsdt <\infty.
  \]
By Lemmas~\ref{lem:10} and~\ref{lem:11},   it suffices to show that
\[
 \sup_{\e>  0}    \e
  \int_0^{ T/ \e} \int_0^{ T/ \e}
|a^d_{\alpha}(s-t)|
  dsdt <\infty,
  \]
  which is an immediate consequence of (\ref{eq:61}) and the fact that $\alpha <2-\frac 1d$.

Let  us write the integration domain $(0, T/\e)^4$ in the form $\cup_{i=1}^4 D_i$, where
\begin{eqnarray*}
 D_{1} &=& \left\{ (s,t,l,m) \in (0,T/\e)^4 :  |s-t| \ge (c-1) (s\wedge t) +c \right\},\\
  D_{2} &=& \left\{ (s,t,l,m) \in (0,T/\e)^4 :  |l-m| \ge (c-1)( l\wedge m )+c \right\},\\
   D_{3} &=& \left\{ (s,t,l,m) \in (0,T/\e)^4 :  |s-l| \ge  (c-1) (s\wedge l )+c \right\},\\
    D_{4} &=& \left\{ (s,t,l,m) \in (0,T/\e)^4 :  |t-m| \ge (c-1)( t\wedge m) +c \right\}.
 \end{eqnarray*}
 We claim that the integral over any of the sets $D_i$ converges to zero.  
By  H\"older's inequality, we have for nonnegative functions
$f_1,f_2,f_3,f_4$ and real numbers $x_i \le y_i$ for $i=1,2,3,4$ that
\begin{align*}
& 
 \e^2 \int_{x_1}^{y_1} \int_{x_2}^{y_2} \int_{x_3}^{y_3} \int_{x_4}^{y_4}
  f_1(s,t)
  f_2(l,m)
   f_3(s,l)
  f_4(t,m)
  dsdtdldm
  \\
  &\quad  \leq
  \left( \e
        \int_{x_1}^{y_1}
    \int_{x_2}^{y_2}
    f_1(s,t)^{q/r}
  \right)^{r/q}
  \left(\e
    \int_{x_3}^{y_3}
    \int_{x_4}^{y_4}
    f_2(l,m)^{q/r}
  \right)^{r/q}
  \\
 & \qquad \times \left(\e
  \int_{x_1}^{y_1}
  \int_{x_3}^{y_3}
  f_3(s,t)^{q/(q-r)}
\right)^{(q-r)/q} 
\left(\e
  \int_{x_2}^{y_2}
  \int_{x_4}^{y_4}
  f_4(s,t)^{q/(q-r)}
\right)^{(q-r)/q}.
\end{align*}
The above inequality, together with  (\ref{eq:54}) and  Lemmas
  \ref{lem:10} and   \ref{lem:4}, implies that the integral over  $\cup_{i=1}^4 D_i$ converges to zero. 
It therefore suffices to consider the integral over  $\cap _{i=1}^4  D_i^c$. Using  the decompositions
\[
\Phi^r(s,t)=R_{\alpha, r} (s,t) +   a^r_\alpha(s,t) 
\]
and
\[
\Phi^{q-r} (s,t) =R_{\alpha, q-r} (s,t) +   a^{q-r}_\alpha(s,t)  
\]
provided by Lemma  \ref{lem:11}, and applying the above H\"older inequality and    Lemma  \ref{lem:11} with $M_1=c-1$ and $M_2=c$, yields
\begin{align*}
  \lim_{\e \to 0}
  \norm{g_{T,q}^\e \otimes_r g_{T,q}^\e}_{\mathfrak{H}^{\otimes 2(q-r)}}^2&= 
  \lim_{\e \to  0}
c_q^4 \e^2
       \int_{ \cap _{i=1}^4  D_i^c}  a^r_{\alpha}(t-s)
        a^r_{\alpha}(m-l) \\
        &\qquad  \times 
            a^{q-r}_{\alpha}(l-s) 
              a^{q-r}_{\alpha}(m-t)
                 dsdtdldm.
           \end{align*}
It thus suffices to show that
   \begin{equation*}   
\lim_{\e \to  0}
  \e^2
  \int_{ (0,\frac T \e)^4}
  \abs{
        a^r_{\alpha}(t-s)
       a^r_{\alpha}(m-l)
a^{q-r}_{\alpha}(l-s)
       a^{q-r}_{\alpha}(m-t)}
dsdtdldm=0.
\end{equation*}
  As $a_{\alpha}$ is the covariance function of  a fractional Brownian
motion with Hurst parameter $\alpha/2$, this follows from the results in
Breton-Nourdin \cite{BreNou} or Darses-Nourdin-Nualart \cite{DNN}. 

\medskip
\noindent
\textit{Proof of condition (d).} We have to show that, for each $T>0$,
\[
  \lim_{N\to\infty} \sup_{\e  >0}
  \sum_{q=N+1}^{\infty} q! \norm{g^\e_{T,q}}_{\mathfrak{H}^{\otimes q}}^2
  =
  \lim_{N \to \infty} \sup_{\e >0}
  \sum_{q=N+1}^{\infty}
  c_q^2 q!
  \e
  \int_0^{  T/ \e} \int_0^{  T/ \e}
  \Phi^q(s,t)
  dsdt =0.
\]
As by assumption $\sum_{q=d}^{\infty} c_q^2 q! = \norm{f}^2_{L^2(\R,\gamma)}<\infty$, this follows
from  (\ref{eq:54}).

\medskip
As conditions (a)-(d) are verified, it follows that  the  random vector 
$(G^\e(t_1),\dots,G^\e(t_p))$ converges in distribution, as $\e$ tends to zero, to $N_p(0,\Sigma)$,
where $\Sigma= (\sigma_{i,j})_{1\le i,j \le p}$ is the matrix given by
\[
\sigma_{i,j} = \sigma^2 (t_i \wedge  t_j).
\]
Here, $\sigma^2$ is given by
(\ref{sigma}) with  $\rho(h) = a_{\alpha}(h)$ defined in (\ref{eq:6}).
  This completes the proof.\qed

\subsection{Convergence of  finite-dimensional distributions: the case $\alpha = 2 - \frac 1d$}
Fix an integer $p \ge
2$, choose times $0<t_1<\dots<t_p < \infty$,
and consider  the random vector $G^\e=(F_\e(t_1)/ \sqrt{ |\log \e|},\ldots, F_\e(t_p) / \sqrt{ |\log \e|} )  $,
where  $F_\e$ has been defined in  (\ref{eq:2a}).   As before, we need to show  that assumptions (a), (b) (c) and (d) of 
 Theorem  ~\ref{thm:4}
are satisfied by the family of $p$-dimensional vectors $G^\e$.

Condition (a) follows from Lemma~\ref{lem:4}, with  $\sigma_{i, j,d} = \sigma^2_{1- 2/d} (t_i \wedge t_j)$ and
$\sigma_{i, j,q} =0$ for $q>d$. Condition (b) is obvious.  In order to show conditions (c) and (d), let us first remark that  (\ref{eq:54}) is replaced here by
\begin{equation}
  \label{eq:54a}
 \sup_{\e>0}  \frac \e {| \log \e|}
  \int_0^{T/ \e} \int_0^{ T/ \e}
  |\Phi^q(s,t)|
  dsdt
  \le C,
\end{equation}
which follows from   Lemmas~\ref{lem:10} and~\ref{lem:11},  and the fact that 
\[
     \frac \e {| \log \e|}
  \int_0^{T /\e} \int_0^{ T/ \e}
|a^d_{\alpha}(s-t)|
  dsdt  \le 
       \frac  {C\e} {| \log \e|}
  \int_0^{T/ \e} \int_0^{ T/ \e}
  |t-s| ^{-1}
  dsdt
   <\infty,
  \]
By the same arguments as in the case $\alpha <2-\frac 1d$, condition (c) reduces to 
show that for any $r=1, \dots, q-1$,
 \begin{equation*}   
\lim_{\e \to  0}
   \frac   {\e^2} { (  \log \e)^2} 
       \int_{ (0,T/ \e)^4}
        a^r_{\alpha}(t-s)
       a^r_{\alpha}(m-l)
a^{q-r}_{\alpha}(l-s)
       a^{q-r}_{\alpha}(m-t)
dsdtdldm=0,
\end{equation*} 
and again this follows from the analogous result for the fractional Brownian motion.
Finally, condition (d) is a consequence of  (\ref{eq:54a}). 
\qed
 
\subsection{Proof of tightness}

Suppose first that $\alpha <2-\frac 1d$. 
It suffices to  show that for any $0\le s<t  $ and $\e >0$, there exists a constant $C_{p}>0$ such that
\[
\| F_\e(t) - F_\e(s) \|_{L^p(\Omega)}  \le C_{p} |t-s|^{1/2}.
\]
To show this inequality we will follow the methodology developed in the proof of Theorem \ref{thm1}.
The starting point of the proof is  the following representation of $f(Y_1(u))$ as an iterated divergence:
\[
f(Y_1(u))= \delta^d \left( f_d(Y_1(u))   \xi_u^{-d}  \partial_u ^{\otimes d} \right).
\]
Then  using Meyer's inequalities, we obtain
\begin{align*}
\| F_\e(t)- F_\e(s) \| _{L^p(\Omega)} &
=\sqrt{\e}  \left\| \int_{s/\e} ^{t/\e} f(Y_1(u)) du \right \| _{L^p(\Omega)} \\
&= \sqrt{\e}  \left\| \int_{s/\e} ^{t/\e}  \delta ^d \left( f_d(Y_1(u))  \xi_u^{-d} \partial_u ^{\otimes d} \right)d u\right \| _{L^p(\Omega)} \\
&\le 
 c_p  \sum_{k=0} ^d \sqrt{\e} \left\| \int_{s/\e} ^{t/\e}  D^k \left( f_d(Y_1(u) ) \xi_u^{-d} \partial_u ^{\otimes d} \right)du \right \| _{L^p(\Omega; \HH^{\otimes k})} 
\\&=:
c_p \sum_{k=0} ^d R_k.
\end{align*}
Using Minkowski and H\"older inequalities, we can write for any $k=0,1, \dots, d$,
\begin{align*}
R_k &=
  \sqrt{\e}  \left\| \int_{ [s/\e,t/\e]^2} f^{(k)}_d(Y_1(u))    f^{(k)}_d(Y_1(v)) \left(  \frac { \langle  \partial_u, \partial_v  \rangle_{\HH} } { \xi_u \xi_v} \right)^{d+k}  du dv\right \| _{L^{p/2}(\Omega)} ^{1/2}\\
& \le
   \| f^{(k)}_d \|_{L^p(\R,\gamma)}  \left( \e \int_{s/\e} ^{t/\e}  \int_{s/\e} ^{t/\e}     | \Phi^{d+k}(u,v)| dudv   \right) ^{1/2},
\end{align*}
where    $\Phi(u,v)$ has been defined in (\ref{Phi}).
  From the assumptions ofTheorem \ref{thm1} and (\ref{equ1}) it follows that the quantity 
$ \| f^{(d)}_d \|_{L^p(\R,\gamma)}$ is finite. Then, it suffices to show that for all $ 0\le s \le t $
\begin{equation} \label{ecu1}
A_\e := \e \int_{s/\e} ^{t/\e}  \int_{s/\e} ^{t/\e}      |\Phi^{ d}  (u,v)| dudv \le   t-s.
\end{equation}
In order to show (\ref{ecu1}), notice first that on the region where $u\le M$, $v \le M$ or  $|u-v| \le M$, we obtain the bound  $C M(t-s) $. Therefore,  it suffices to consider the integral over the region
\[
\mathcal{D}_{\e,M}= \{ (u,v) \in [s/\e, t/\e]^2 \cap [M,\infty)^2: |u-v| \ge M\}.
\]
We denote the corresponding term by
\[
\widetilde{A}_\e := \e \int_ {\mathcal{D}_{\e,M}}      |\Phi^{ d}  (u,v)| dudv.
\]
We are going to use two different estimates for $| \Phi^{ d}  (u,v)|$. First, on
the region $\{ (u,v) \in \mathcal{D}_{\e,M}: |u-v| \le (c-1) (s\wedge t) +c\}$, using
Lemmas  \ref{lem:2} and \ref{lem:1}, we have for large $M$,
\begin{eqnarray*}
  \Phi^{d}(u,v) 
 & =& \frac{1}{2^{d}(1+u_1(u))^{d}(1+u_1(u+h))^{d}} \\
 && \times 
    \left( \frac{u\wedge v }{u\vee v} \right)^{(\beta-  \frac  \alpha 2  )d}
    \left(
  2   a_\alpha(|u-v|) + u_2(u,v)
    \right)^{d} \\
    &\le &
    C |u-v| ^{ (\alpha -2) d}.
\end{eqnarray*}
Secondly,  on the region $\{ (u,v) \in \mathcal{D}_{\e,M}: |u-v| \ge (c-1) (s\wedge t)
+c\}$, using this time Lemmas \ref{lem:2} and \ref{lem:3}, we can write, again
for large $M$,
\begin{eqnarray*}
 | \Phi^{d} (u,v) |
 & \le & \frac{C_M} {2^{d}(1+u_1(u))^{d}(1+u_1(u+h))^{d}} \\
 &&\times
  \left(  (u\wedge v)^{ (\alpha + \nu -2)d } |u-v| ^{ -d\nu}   \mathbf{1}_{\alpha <1} +
  |u-v| ^{ d(\alpha -2)}   \mathbf{1}_{\alpha \ge 1} \right)
     \\
    &\le &
    C |u-v| ^{ (\alpha -2) d}.
\end{eqnarray*}
These estimates and the change of variable $(u,v) \to (u,u+h)$ lead to
\[
\widetilde{A}_\e  \le C (t-s) \int_M^\infty  h^{ (\alpha -2) d} dh.
\]
Under the condition  $\alpha < 2-\frac 1d$, the integral   $ \int_M^\infty  h^{ (\alpha -2) 2d} dh$ is finite and we obtain the desired estimate. 

Suppose now that $\alpha =2-\frac 1d$.  We claim that for any $0\le s<t  $ and $\e  \in (0,1)$, there exists a constant $C_{p}>0$ such that
\[
\frac 1 { \sqrt{ | \log \e |}} \| F_\e(t) - F_\e(s) \|_{L^p(\Omega)}  \le C_{p} |t-s|^{1/2}.
\]
The proof is analogous to the case  $\alpha < 2-\frac 1d$, and can be completed using the estimate
\[
\sup_{\e \in (0,1)} \frac 1 { \sqrt{ | \log \e |}}  \left( \int _M ^ {t/\e}   h^{ -1} dh \right) ^{1/2}   <\infty.
\]
 \qed

\section{Proof of Theorem~\ref{thm1.3}}
\label{sec5}

Recall the definition of  $\widetilde{F}_\e$   given by~\eqref{eq:2}. Denote
$\widehat{F}_\e=\e^{ 1/2-d(1-\alpha/2)}  \widetilde{F}_\e$ and let  $\widehat{F}_{q,\e}$  be the projection of
$\widehat{F}_\e$ on the $q$th Wiener chaos.
 Note that by assumption, the exponent $1/2- d (1-\alpha/2) $ is positive. 

\subsection{Convergence of finite-dimensional distributions}
We will show for $s,t \ge 0$ that
  \begin{align}
    \label{eq:17}
\lim_{\e,\delta \to  0}
\Ex{[\widehat{F}_{q,\e}(s) \widehat{F}_{q,\delta}(t))]}
&=
\begin{cases}
 c_d^2 K_d(s,t)
  & \qquad \text{if $q=d$,}
  \\
  0 & \qquad \text{if $q>d$,}
\end{cases}
 \end{align}
where $K_d(s,t)$ has been defined in (\ref{eq:70a}),
and also that
\begin{equation}
    \label{eq:44}
    \lim_{N \to \infty}
    \limsup_{\e \to  0}
    \sum_{q=N}^{\infty}       \Ex{[
      \widehat{F}_{q,\e}(t)^2]
    }
    =
    0.
  \end{equation}
  Then (\ref{eq:17}) for $q=d$ implies that for every sequence $\e_n \rightarrow 0$,  and for each $t\ge 0$, the sequence of random variables
  $\widehat{F}_{q,\e_n} (t)$ is  a Cauchy sequence
in $L^2(\Omega)$. Therefore,  $\widehat{F}_{d,\e} (t)$ 
  converges in $ L^2(\Omega)$  as $\e$ tends to zero  to $c_d H_{\infty}(t)$, where
  $H_\infty(t)$  is the generalized Hermite process with covariance given by
  (\ref{eq:70a}). Also, for $q > d$, $\widehat{F}_{q,\e}$ converges to zero in
  $L^2(\Omega)$, as $\e$ tends to zero.
  Together with~\eqref{eq:44}, this implies that for any $t\ge 0$,   $\widehat{F}_\e(t)$ converges in $L^2(\Omega)$, as $\e$ tends to zero,
 to  $c_d H_\infty(t)$. As a consequence, the finite-dimensional distributions of the process $\widehat{F}_\e$ converge in  law to those of the process $c_dH_\infty$.   This is also true for the finite-dimensional distributions of the process $\e^{1/2-d(1-\alpha/2)} F_\e$, because this process has the same law as  $\widehat{F}_\e$.
 
We now proceed to the proof of (\ref{eq:17}) and (\ref{eq:44}).
Taking into account that (see~\eqref{eq:3})
\[
\widehat{F}_{q,\e} (t)= c_q \e^{-d(1-\frac \alpha 2)} \int_0^ t  \| \Delta _\e X(u) \| _{L^2(\Omega)}  ^{-q} I_q( \mathbf{ 1}_{[u+\e, u]} ^{\otimes q}) du,
\]
we can write
\[
  \Ex{[
   \widehat{ F}_{q,\e}(s) \widehat{F}_{q,\delta}(t)]
  }
=
  c_q^2 q!( \e\delta)^{-d( 1-\frac \alpha 2)} 
  \int_0^{s} \int_0^{t}
 \Phi^q_{\e, \delta} (u,v) dudv,
\]
where
\begin{equation}  \label{phi2}
      \Phi_{\e,\delta}(u,v) =
    \frac{
      \Ex{ \left[
        ( X(u+ \e) - X(u))
        ( X(v+ \delta) - X ) (v)\right]
      }
    }{
      \sqrt{
        \Ex{
           \left[( X(u+ \e) - X(u))^2 \right]
        }
        \Ex{
          \left[ ( X(v+\delta) - X(v) )^2 \right]
        }
      }
    }.
\end{equation}
Assuming $t\ge \e$, consider the decomposition
\begin{eqnarray*}
 \Ex{[\widehat{ F}_{q,\e}(t) ^2]}& =&c_q^2 q! \e^{d(\alpha -2)} 
  \int_0^t
     \int_0^t
      \Phi^q_{\e ,\e}(u,v)
       du dv \\
       &= &
       c_q^2 q! \e^{d(\alpha -2)} 
  \int_{ [0,t]^2 \cap \{ u\wedge v \le \e\}}
      \Phi^q_{\e ,\e}(u,v)
       du dv \\
 &&       +
       c_q^2 q! \e^{ d(\alpha -2)} 
  \int_\e^t \int_ \e ^t     
      \Phi^q_{\e ,\e}(u,v)
       du dv.
       \end{eqnarray*}
Then, using the bound~\eqref{eq:50} and the fact that $| \Phi_{\e,\e} (u,v) | \le 1$, we can write for any $q\ge q^* \ge d$
such that $\alpha > 2 -\frac 1 {q^*}$
\begin{equation} \label{equ2}
 \Ex{[\widehat{ F}_{q,\e}(t) ^2]} \le  c_q^2 q!  t\e^{1+d(\alpha -2)} 
      +
       C c_q^2 q! \e^{(q^*-d)(2-\alpha )} 
  \int_\e^t \int_ \e ^t     \left(
 (u\vee v)^{\alpha -2} (uv)^{-\beta+\frac \alpha 2}
       \right)^{q^*}
       du dv.
\end{equation}
Notice that  the integral appearing in the right-hand side of the above equation is finite because $(-\beta + \frac \alpha 2) q^*\ge
( \frac \alpha 2-1) q^*>  -\frac 12$ and $(\alpha -2) q^* >-1$. As a consequence, if $ q>d$ we can choose $q^* >d$ with  $\alpha > 2 -\frac 1 {q^*}$ and we obtain
\[
\lim _{\e \rightarrow 0} \Ex{[\widehat{ F}_{q,\e}(t) ^2]}  =0.
\]
Furthermore, the estimate  (\ref{equ2}) also implies
\begin{equation}
\sup_{\e \in (0,1)} \sup_{q\ge d}    \Ex{[
   \widehat{ F}_{q,\e}(t) ^2]}  \le Cc_q^2 q!,
\end{equation}
which  allows us to establish  (\ref{eq:44}) because  the series $\sum_{q=d}^{\infty} c_q^2 q!$  is convergent.

It remains to show (\ref{eq:17}) for $q=d$. For any $\e >0$, we define
\[
\widehat{F}^{(1)} _{d,\e} (t)= c_d \e^{-d(1-\frac \alpha 2)} \int_0 ^\e  \| \Delta _\e X(u) \| _{L^2(\Omega)}  ^{-d} I_d( \mathbf{ 1}_{[u+\e, u]} ^{\otimes d}) du
\]
and  $\widehat{F}^{(2)} _{d,\e} (t)=\widehat{F} _{d,\e} (t)-\widehat{F}^{(1)} _{d,\e} (t)$.
It is easy to show that
\[
\lim _{\e \rightarrow 0} \Ex{[\widehat{ F}^{(1)}_{d,\e}(t) ^2]}  =0.
\]
Therefore, 
\[
\lim_{\e,\delta \rightarrow 0} 
\Ex{[
    \widehat{F}_{d,\e}(s) \widehat{F}_{d,\delta}(t)]}
    =
    \lim_{\e,\delta \rightarrow 0} 
\Ex{[
    \widehat{F}^{(2)}_{d,\e}(s) \widehat{F}^{(2)}_{d,\delta}(t)]}.
\]
We can write
\begin{equation}
  \label{eq:52}
  \Ex{[
    \widehat{F}^{(2)}_{d,\e}(s) \widehat{F}^{(2)}_{d,\delta}(t)]
  }
\\  =
    c_d^2 d!  
   \int_\e^s
     \int_\delta^t
     \left(
       (\e\delta)^{\alpha/2-1}
      \Phi_{\e ,\delta}(u,v)
       \right)^d
       du dv.
\end{equation}
By Lemma~\ref{lem:8}, we have that
\begin{align*}
  \lim_{\e, \delta \to 0}
     \int_\e^s
     \int_\delta^t
     &
     \left(
       (\e \delta)^{\alpha/2-1}
      \Phi_{\e,\delta}(u,v)
       \right)^d
       du dv
  \\ &=
     \int_0^s
     \int_0^t
       \left(
         \lim_{\e, \delta \to 0}
       (\e\delta)^{\alpha/2-1}
      \Phi_{\e,\delta}(u,v)
       \right)^d
       du dv
     \\ &=
            \int_0^s
            \int_0^t
            \left(
              \frac{
                \partial_{u,v} \Ex{[X (u) X(v)]}
              }{
                2\lambda \left( uv \right)^{\beta-\alpha/2}
              }
            \right)^d
           du dv,
\end{align*}
where the interchange of integration and limit is justified by the
bound~\eqref{eq:50}, which yields an integrable  bound since we have $d(\alpha-2)
>   -1$.   
 This completes the proof of  (\ref{eq:17}) and (\ref{eq:44}).
 
 \subsection{Tightness}
 To show tightness it suffices to estimate the  moment of order two of an increment. We can write, for $s\le t$,
 \begin{eqnarray*}
 \Ex{[ |
    \widehat{F}_{\e}(s) - \widehat{F}_{\e}(t) |^2]} &=&
     \e^{ d(\alpha -2)} \sum_{q=d } ^\infty c_q^2 q!    \int_s^t \int_s^t \Phi ^q_{\e,\e} (u,v) dudv \\
     & \le  &   \left(\sum_{q=d } ^\infty c_q^2 q!    \right)    \e^{ d(\alpha -2)}   \int_s^t \int_s^t  |\Phi ^d_{\e,\e} (u,v)| dudv.
\end{eqnarray*}
If   we integrate on the set where at least one of the variables is less than $\e$,  using H\"older's inequality  we obtain the bound
\[
C  \e^{ d(\alpha -2)}   (t-s)  \int_\R \mathbf{1} _{[s,t]} (u) \mathbf{1} _{[0,\e]} (u) du
\le C  \e^{ d(\alpha -2)+ \frac 1 {p_1} } (t-s) ^{1 + \frac 1{p_2}},
\]
where $1/ p_1 + 1/ p_2=1$. Choosing $p_1= 1/ ( d(2-\alpha)) >1$ we obtain a bound of the form  $C(t-s) ^{2-d(2-\alpha)}$. 
Notice that  $2-d(2-\alpha) >1$.

 On the other hand, if both variables are larger than $\e$, we can use the estimate \eqref{eq:50} and we obtain the bound
 \[
C  \int_s^t \int_s^t  ((u\vee v) ^{\alpha -2} (uv)^{ -\beta +\frac \alpha 2} )^d dudv.
  \]
  Making the change of variables $(u,v) \rightarrow (s+ x (t-s), s+ y(t-s)$, the above integral  can be bounded by
  \[
  C (t-s)^{2+ d( 2\alpha -\beta -2)}  \int_0^1 \int_0^1  ((x\vee y) ^{\alpha -2} (xy)^{ -\beta +\frac \alpha 2} )^d dxdy
  \]
  and again we obtain the desired estimate because  
  \[
  2+ d( 2\alpha -\beta -2)  \ge 2+ d( 2\alpha -3)  > d \ge 1.
  \]

\section{Technical lemmas}
\label{sec6}

\subsection{Lemmas for the case $\alpha \leq 2 - 1/q$}
\label{s-10}
Define
\begin{equation}  \label{help}
\varphi_{\alpha, q} (\e)=
 \begin{cases}
\e &  {\rm if} \ \alpha < 2- 1/q \\
\e /|\log \e |    & {\rm if}  \ \alpha =2- 1/q
\end{cases}.
\end{equation}
\begin{lemma}
  \label{lem:10}
  Assume (H.1) and (H.2).
  If $\alpha \leq 2 - 1/q$, then for any $T>0$,
  \begin{equation*}
    \lim_{\e \to   0}
  \varphi_{\alpha,q}(\e)
    \int_{0}^{T/\e } \int_{0}^{T/\e} \mathbf{1}_{D_2}(s,t) \abs{\Phi^q(s,t)} ds dt = 0,
  \end{equation*}
  where the set $D_2$ contains all tuples $(s,t) \in \R_+^2$ such that $\abs{s-t} \ge
(c-1) ( s \land t) + c$. Here, $c$ is the constant appearing in (H.2).
\end{lemma}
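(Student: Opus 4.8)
The plan is to fix $T>0$ and reduce everything to a one–variable integral estimate. Set
\[
J_\e=\int_0^{T/\e}\int_0^{T/\e}\mathbf{1}_{D_2}(s,t)\,\abs{\Phi(s,t)}^q\,ds\,dt ,
\]
with $\Phi$ as in~(\ref{Phi}); the goal is $\varphi_{\alpha,q}(\e)\,J_\e\to0$. Since both $\abs{\Phi(s,t)}$ and $D_2$ are symmetric in $(s,t)$, it suffices to bound the part of $J_\e$ over $\{0<s<t\}$, where $D_2$ reads $t>c(s+1)$; in particular $s<(t-s)/(c-1)$ and $t-s\le t\le\tfrac{c}{c-1}(t-s)$.

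First I would establish a pointwise bound for $\abs{\Phi(s,t)}$ on $D_2\cap\{s<t\}$. Writing $\xi_s=\norm{X(s+1)-X(s)}_{L^2(\Omega)}$, Lemma~\ref{lem:2} gives $\xi_s\asymp s^{\beta-\alpha/2}$ for $s\ge1$ (the lower bound using $u_1(s)\to0$ as $s\to\infty$ together with continuity and non-degeneracy on bounded intervals) and $\xi_s\asymp1$ for $0\le s\le1$. On the sub-region $\{s\ge1\}$, feeding these into Lemma~\ref{lem:3} yields $\abs{\Phi(s,t)}\le C(s/t)^{\beta-\alpha/2}(t-s)^{\alpha-2}\le C(t-s)^{\alpha-2}$ when $\alpha\ge1$, and the finer bound $\abs{\Phi(s,t)}\le C\,s^{\beta+\alpha/2+\nu-2}(t-s)^{\alpha/2-\beta-\nu}$ when $\alpha<1$. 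On the sub-region $\{0\le s<1\}$ (where $t>c$), I would instead expand $\E[(X(s+1)-X(s))(X(t+1)-X(t))]$ as a combination of four covariances of the form $a^{2\beta}\phi(b/a)$, apply the mean value theorem twice, and invoke the decay of $\phi'$ from (H.2)(d) --- legitimate since every argument that arises is $\ge t/(s+1)\ge c$ --- which, together with $\xi_s\asymp1$, gives $\abs{\Phi(s,t)}\le C(t-s)^{\alpha-2}$ if $\alpha\ge1$ and $\abs{\Phi(s,t)}\le C(t-s)^{-\nu}$ if $\alpha<1$.

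Then I would integrate. The geometric point is that for fixed $h=t-s$ the slice $\{s:\,(s,s+h)\in D_2\cap(0,T/\e)^2\}$ has Lebesgue measure at most $h/(c-1)$ on $\{s\ge1\}$ and at most $1$ on $\{s<1\}$. Carrying out the $s$-integral at fixed $h$ --- using the bounds above, and genuinely exploiting their $s$-dependence when $\alpha<1$ on $\{s\ge1\}$ --- one finds $J_\e$ dominated by a finite sum of integrals $\int_c^{T/\e}h^\gamma\,dh$ (the last one possibly carrying an extra factor $\log h$) with $\gamma$ ranging over $\{q(\alpha-2)+1,\ q(\alpha-2),\ -q\nu,\ q(\alpha/2-\beta-\nu)\}$, all of which the hypothesis $\alpha\le2-\tfrac1q$ forces to be $\le0$. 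It then remains to record the elementary dichotomy, that $\varphi_{\alpha,q}(\e)\int_c^{T/\e}h^\gamma\,dh\to0$ whenever $\gamma\le0$: if $\gamma\le-1$ the integral is $O(\abs{\log\e})$ while $\varphi_{\alpha,q}(\e)\le\e$; if $-1<\gamma<0$ then necessarily $\alpha<2-\tfrac1q$, so $\varphi_{\alpha,q}(\e)=\e$ and the integral is $O(\e^{-1-\gamma})$, giving a product of order $\e^{-\gamma}$; and if $\gamma=0$ then $\alpha=2-\tfrac1q$, the integral is $O(\e^{-1})$, and $\varphi_{\alpha,q}(\e)=\e/\abs{\log\e}$.

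I expect the main obstacle to be exactly the case $\alpha<1$ on $\{s\ge1\}$: there the naive bound $\abs{\Phi(s,t)}\le C(t-s)^{-\min(\nu,2-\alpha)}$ together with a slice of length $\asymp h$ is too lossy (it fails as soon as $q\min(\nu,2-\alpha)\le2$, e.g.\ for $q=1$), so one genuinely has to keep the explicit $(s\wedge t)$-dependence coming out of Lemmas~\ref{lem:2} and~\ref{lem:3} and perform the $s$-integration with care, splitting according to the sign of $q(\beta+\alpha/2+\nu-2)+1$; in the remaining subcase the exponent $q(\alpha/2-\beta-\nu)$ is automatically $<-1$, which closes the estimate. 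A minor but genuinely used point is the uniform lower bound $\xi_s\gtrsim s^{\beta-\alpha/2}$ for $s\ge1$, which follows from $u_1(s)\to0$.
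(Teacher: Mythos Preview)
Your argument is correct and follows essentially the same route as the paper: reduce by symmetry, bound $|\Phi|$ pointwise via Lemmas~\ref{lem:2} and~\ref{lem:3}, then integrate in the $h=t-s$ direction. The one methodological difference is how the region $\{s\wedge t \text{ small}\}$ is handled: you analyse $\{0\le s<1\}$ directly by re-deriving a crude version of Lemma~\ref{lem:3} (triangle inequality plus one application of the mean value theorem to each grouped difference, needing only (H.2)(d) on $\phi'$), whereas the paper simply introduces a cutoff $\delta>0$, uses the trivial bound $|\Phi|\le1$ on $\{s\wedge t<\delta\}$ (a region of measure $O(\delta/\e)$, so that $\varphi_{\alpha,q}(\e)$ times its contribution is $\le C_T\delta$), and only applies Lemmas~\ref{lem:2}--\ref{lem:3} on $\{s,t\ge\delta\}$, letting $\delta\to0$ at the end. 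The paper's trick is shorter and avoids your side computation; your version has the advantage of producing explicit exponents $\gamma$ and a clean dichotomy, and in the case $\alpha<1$ your bound $|\Phi|\le C s^{\beta+\alpha/2+\nu-2}(t-s)^{\alpha/2-\beta-\nu}$ is in fact slightly sharper than the paper's $|\Phi|\le C s^{\alpha+\nu-2}(t-s)^{-\nu}$ (though they integrate to the same order on $D_2$).
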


\begin{proof}
By symmetry, it suffices to consider the integral over the set
$\{ t \le s\}$. In this case, if $(s,t)\in D_2$, then $s-t \ge (c-1)t+c$, that is, $s\ge  c(t+1)$.

  Note that for all $\varepsilon \in (0,1)$,
  \begin{equation*}
    \int_0^{T/\e } \int_0^{T/\e} \mathbf{1}_{D_{\delta}}(s,t) dsdt
    \leq C_{T} \e \delta,
  \end{equation*}
  where $D_{\delta}$ contains all tuples $(s,t)$ such that at least one of the
  variables $s$ and $t$ is bounded by $\delta$. Together with the fact that
  $\abs{\Phi(s,t)}\leq 1$ we get that
  \begin{equation*}
   \varphi_{\alpha,q}(\e)
    \int_0^{T/\e } \int_0^{T/\e}
     \mathbf{1}_{D_{2,\delta}}(s,t) \abs{\Phi^q(s,t)} dsdt
    \leq C_{T} \delta
  \end{equation*}
  for all $\delta > 0$.
  It therefore suffices to show that
  \begin{equation*}
    \lim_{\e \to  0}
    \varphi_{\alpha,q}(\e)
    \int_{0}^{T/\e} \int_{0}^{T/\e} \mathbf{1}_{D_{2,\delta }}(s,t) \abs{\Phi(s,t)}^q dsdt = 0,
  \end{equation*}
  where $D_{2,\delta} = D_2 \cap D_{\delta}^c \cap \{ t\le s\}$ contains all tuples $(s,t)$ which are elements
  of $D_2$ and such that $s,t \geq \delta $.
  
  Then, we can apply  Lemma  \ref{lem:3}  and obtain that
  \begin{equation*}
   \abs{\Ex\big[\left( X(s+1)-X(s) \right) \left( X(t+1)-X(t) \right)\big]} \leq
C_\delta
    \begin{cases}
      t^{2\beta-2+\nu} \left( s-t \right)^{-\nu} & \qquad \qquad \text{if $\alpha < 1$} \\
      t^{2\beta-\alpha}(s-t)^{\alpha-2} & \qquad \qquad \text{if $\alpha \geq 1$}
    \end{cases},
 \end{equation*}
   where $C_\delta $ is a positive constant  depending on $\delta$.
   
 Moreover, we claim that
 \[
 \inf_{\delta \le s <\infty} ( 1+ u_1(s)) =b_\delta >0.
 \]
 In fact, for any $s\ge 0$ we have $\Ex\big[(X(s+1) - X(s))^2\big] >0$ (this is a consequence of the self-similarity property) and the map
 $t\rightarrow  \Ex\big[(X(s+1) - X(s))^2\big] $ is continuous. This implies that $1+u_1(s)$ is a strictly positive continuous function on $[\delta, \infty)$, which   is bounded by $1+K_\delta s^{-\delta _1}$, with $\delta_1>0$, for $s$ large enough, and for some constant $K_\delta$,  by Lemma   \ref{lem:2}. Notice that $u_1(s)$ may blow up at zero if $\alpha < 2\beta$. Therefore, we obtain
  \begin{equation*}
    | \Phi(s,t) | \leq
C_\delta b_\delta^{-1}
    \begin{cases}
      t^{\alpha-2+\nu} \left( s-t \right)^{-\nu} & \qquad \qquad \text{if $\alpha < 1$} \\
       (s-t)^{\alpha-2} & \qquad \qquad \text{if $\alpha \geq 1$}
    \end{cases}.
 \end{equation*}

  If $\alpha < 2-1/q$, we then
  get that 
  \begin{equation*}
    \int_0^{T/\e} \int_{0}^{T/\e} \mathbf{1}_{D_{2,\delta}}(s,t) \abs{\Phi^q(s,t)} dsdt
    \le
    C_\delta  b_\delta^{-1}
    \e^{-(\alpha-2)q-2}
  \end{equation*}
and the assertion follows as 
  \begin{equation*}
    \lim_{\e \to 0}  \e^{-(\alpha-2)q-2} \varphi_{\alpha,q}(\e) = 0.
  \end{equation*}
  In the case $\alpha=2-1/q \geq 1$, we obtain
  \begin{equation*}
    \int_0^{T/\e} \int_{0}^{T/\e} \mathbf{1}_{D_{2,\delta}}(s,t) \abs{\Phi^q(s,t)} dsdt
    \leq
     C_\delta  b_\delta^{-1} \e^{-1},
  \end{equation*}
  and again
  \begin{equation*}
    \lim_{\e \to  0} \e^{-1} \varphi_{\alpha,q}(\e) = 0.
  \end{equation*}
\end{proof}

\begin{lemma}
  \label{lem:11}
  Let $\alpha \leq 2 - 1/q$, assume (H.1) and (H.2) and, for $r=1,2,\dots,q$, define 
   \begin{equation}
     \label{eq:64}
     R_{\alpha,r}(s,t) = 
      \Phi(s,t)^r -  a_{\alpha}(s-t)^r.
  \end{equation}
  Then, for all $M_1,M_2>0$, it holds that
  \begin{equation}
    \label{eq:16}
    \lim_{ \e \to 0}
    \varphi_{\alpha,q}(\e)
    \int_{0}^{ T /\e} \int_{0}^{T /\e}
    \mathbf{1}_{D_{M_1,M_2}}(s,t)
    \abs{R_{\alpha, r}(s,t)}^{q/r}
    dsdt
    =
    0,
  \end{equation}
  where the set $D_{M_1,M_2}$ is given by
  \begin{equation}
    \label{eq:73}
    D_{M_1,M_2} = \left\{
      (s,t) \in \R^2 :
      \abs{s-t} \leq M_1 \left( s \land t \right) + M_2
    \right\}.
  \end{equation}
\end{lemma}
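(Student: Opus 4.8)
\emph{Strategy and reduction of the domain.} The plan is to split $[0,T/\e]^2\cap D_{M_1,M_2}$, discard the harmless pieces by crude bounds, and on the remaining region compare $\Phi$ with $a_\alpha$ via Lemmas~\ref{lem:2} and~\ref{lem:1} and then carry out an explicit estimate in which the powers of $\e$ cancel. Since $\abs{\Phi}\le1$ and $a_\alpha$ is continuous and bounded on $\R$ (it decays like $\abs{h}^{\alpha-2}$ at infinity, see~\eqref{eq:56}), $\abs{R_{\alpha,r}}^{q/r}$ is bounded on all of $\R^2$. On $D_{M_1,M_2}\cap\{s\wedge t\le M\}$ one has $\abs{s-t}\le M_1M+M_2$, so this set lies in a fixed compact set and its contribution to the integral is $O(\varphi_{\alpha,q}(\e))\to0$; on $\{\abs{s-t}<\eta\}\cap[0,T/\e]^2$ the Lebesgue measure is at most $2\eta T/\e$, so its contribution is $\le C\eta$ uniformly in $\e$. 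It therefore suffices to show that, for every fixed $\eta\in(0,1)$ and every large $M$, the integral over $\mathcal{G}:=D_{M_1,M_2}\cap\{s\wedge t>M\}\cap\{\abs{s-t}\ge\eta\}\cap[0,T/\e]^2$ tends to $0$ as $\e\to0$, and then to let $\eta\to0$. By symmetry I restrict to $s\le t$ and put $h=t-s\in[\eta,M_1s+M_2]$.

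\emph{Comparison with $a_\alpha$ on $\mathcal{G}$.} Lemmas~\ref{lem:2} and~\ref{lem:1} give
\[
\Phi(s,s+h)=\Big(\tfrac{s}{s+h}\Big)^{\beta-\frac\alpha2}\,\kappa(s,h)\,\Big(a_\alpha(h)+\tfrac12u_2(s,s+h)\Big),
\]
where $\beta-\tfrac\alpha2\ge0$, the factor $\kappa(s,h)$ (a product of terms $(1+u_1(\cdot))^{-1/2}$) satisfies $\abs{\kappa(s,h)-1}\le Cs^{-\delta_1}$ by~\eqref{eq:14}, and $u_2$ obeys the bounds~\eqref{eq:19}--\eqref{eq:19a}. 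Expanding $\Phi^r-a_\alpha^r$ while keeping $a_\alpha(h)^r$ as a block, and using that $\big(\tfrac{s}{s+h}\big)^{(\beta-\frac\alpha2)r}\kappa^r-1=O\!\big(\tfrac{h}{s+h}+s^{-\delta_1}\big)$ for $s>M$, one arrives at the pointwise estimate
\[
\abs{R_{\alpha,r}(s,s+h)}\le C\abs{a_\alpha(h)}^{r}\Big(\tfrac{h}{s+h}+s^{-\delta_1}\Big)+C\abs{a_\alpha(h)}^{r-1}\abs{u_2(s,s+h)}+C\abs{u_2(s,s+h)}^{r}.
\]
The point of keeping $\abs{a_\alpha(h)}^r$ grouped is that raising it to the power $q/r$ produces $\abs{a_\alpha(h)}^q\le Ch^{(\alpha-2)q}$ for $h\ge1$ (by~\eqref{eq:61}), which is exactly the decay compatible with $\alpha\le2-\tfrac1q$. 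I then split $\mathcal{G}$ once more, at a fixed small $\delta_0>0$: on $\{h\ge\delta_0 s\}$ (where $h$ and $s$ are comparable) both $\abs{\Phi(s,s+h)}$ and $\abs{a_\alpha(h)}$ are $\le Cs^{\alpha-2}$, hence $\abs{R_{\alpha,r}}^{q/r}\le Cs^{(\alpha-2)q}$; on $\{\eta\le h\le\delta_0 s\}$ I use the pointwise estimate above, noting that $\abs{u_2}\le Cs^{-\rho_0}$ for some $\rho_0>0$ and $\abs{a_\alpha(h)}\le C$ when $h<1$, whereas $\abs{a_\alpha(h)}\le Ch^{\alpha-2}$ and $\abs{u_2}\le C(s^{-1}h^{\alpha-1}+s^{\alpha-2})$ when $h\ge1$.

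\emph{Matching the powers of $\e$ and conclusion.} Inserting these bounds, expanding the $q/r$-th powers, and changing variables to $(s,h)$, every resulting term is a constant (depending on $\eta,M,\delta_0,q$) times $\e\int_M^{T/\e}\!\!\int s^ah^b\,dh\,ds$. Carrying out first the inner $h$-integral over $[\eta,1]$, $[1,\delta_0 s]$ or $[\delta_0 s,M_1 s+M_2]$ and then the $s$-integral, one finds that the surviving $s$-exponent is at most $(\alpha-2)q+1$ in the $h\ge1$ regimes (and one is left with $\int_M^{T/\e}s^{-\rho q/r}\,ds$, $\rho>0$, in the $h<1$ regime). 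The cancellation that makes this work is that the powers $q/r$ drop out: for example $\big(\abs{a_\alpha(h)}^r\tfrac{h}{s+h}\big)^{q/r}\le Ch^{(\alpha-2)q+q/r}s^{-q/r}$, whose double integral over $[1,\delta_0 s]\times[M,T/\e]$ is of the order of $\int_M^{T/\e}s^{(\alpha-2)q+1}\,ds$, and the $u_2$-contributions behave in the same way. Since $\alpha\le2-\tfrac1q$ we have $(\alpha-2)q+1\le0$; when $\alpha<2-\tfrac1q$ this integral is bounded and the whole contribution is $O(\e)\to0$, while when $\alpha=2-\tfrac1q$ it is of order $T/\e$ and the normalisation $\varphi_{\alpha,q}(\e)=\e/\abs{\log\e}$ renders the contribution $O(1/\abs{\log\e})\to0$. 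Adding the three pieces from the reduction step yields $\limsup_{\e\to0}$ of the full integral $\le C\eta$, and letting $\eta\to0$ completes the proof.

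\emph{Expected main obstacle.} The real work is the bookkeeping in the last paragraph: one must check that \emph{each} error contribution---from $\kappa$ (equivalently from $u_1$), from the dilation factor $(\tfrac{s}{s+h})^{\beta-\alpha/2}$, and from $u_2$ at first and at higher orders (the latter only when $r\ge2$)---integrates, after raising to the power $q/r$, to growth no faster than $\e^{-1}$, respectively $\e^{-1}\abs{\log\e}$. This is precisely why one cannot afford the crude bound $\abs{R_{\alpha,r}}\le C\abs{\Phi-a_\alpha}$: it would produce, in the $h\ge1$ regimes, the exponent $(\alpha-2)\tfrac qr+1$, which fails to be $\le0$ as soon as $r\ge2$ and $\alpha$ is close to $2-\tfrac1q$. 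A minor technical point is that the constants supplied by Lemmas~\ref{lem:2} and~\ref{lem:1} depend on $\eta$, but this is harmless because $\eta$ is frozen before $\e\to0$.
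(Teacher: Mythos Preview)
Your proposal is correct and follows essentially the same approach as the paper: reduce to the region where $s\wedge t$ and $\abs{s-t}$ are bounded below, apply Lemmas~\ref{lem:2} and~\ref{lem:1} to write $\Phi^r$ as $(s/t)^{(\beta-\alpha/2)r}\kappa^r(a_\alpha+u_2/2)^r$, and then control the resulting error terms (from $u_1$, the dilation factor, and $u_2$) by explicit integration showing the surviving $s$-exponent never exceeds $(\alpha-2)q+1$. The paper's only organizational difference is that it writes out an explicit three-term decomposition $R_{\alpha,r}=R_{\alpha,r,1}+R_{\alpha,r,2}+R_{\alpha,r,3}$ and treats each separately (with a change of variables $x=zy$ for the dilation piece rather than your extra split at $h=\delta_0 s$), but the underlying estimates and the key cancellation are identical.
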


\begin{proof}
  From~\eqref{eq:64}, we see that
  \begin{equation}
    \label{eq:63}
    \sup_{(s,t) \in D_{M_1, M_2} } \abs{R_{\alpha, r}(s,t)} \leq C_{\alpha,q}.
  \end{equation}
  Indeed, by the  Cauchy-Schwarz inequality $\abs{\Phi(s,t)} \leq 1$    and, by~\eqref{eq:56}, 
$\abs{a_{\alpha}(s-t)} \leq C_{\alpha}$.
Also note that for $\delta >0$,
\begin{equation}
  \label{eq:71}
  \int_0^{T/\e} \int_0^{T/\e} \mathbf{1}_{D_{\delta}}(s,t) dsdt \leq C_{T} \, n \,  \delta,
\end{equation}
where $D_{\delta}$ consists of all tuples $(s,t) \in \R_+^2$ such that at least one of
the quantities $s,t,\abs{s-t}$ is less than $\delta$, and $C_{T}$ is some
positive constant.
The bounds~\eqref{eq:63} and~\eqref{eq:71} now yield that
\[
  \varphi_{\alpha,q}(\e)
    \int_{0}^{T/\e} \int_{0}^{T/\e}  \mathbf{1}_{D_\delta \cap D_{M_1,M_2}}(s,t)
    \abs{R_{\alpha, r}(s,t)}^{q/r}
    dsdt
  \leq C \delta
\]
for all $\delta > 0$.
Also, by symmetry it suffices to consider the integral over the set
$\{ (s,t)\in D_{M_1,M_2} : s<t\}$.
It therefore suffices to prove that
\begin{equation}
  \label{eq:20}
    \lim_{\e \to  0}
    \varphi_{\alpha,q}(\e)
    \int_{0}^{T/\e} \int_{0}^{T/\e}
    \mathbf{1}_{D_{M_1,M_2,\delta}}(s,t)
    \abs{R_{\alpha, r}(s,t)}^{q/r}
    dsdt
    =
    0,
  \end{equation}
  where 
  \[
  D_{M_1,M_2,\delta}=\{ (s,t): \delta \le  s<t ,  \delta \le t-s \le M_1 s+M_2\}.
  \] 
  For  $(s,t)\in   D_{M_1,M_2,\delta}$, Lemmas~\ref{lem:2} and~\ref{lem:1} apply and yield that
  \begin{align*}
    \Phi^r(s,t) &=
    \left( \frac  st
    \right)^{(\beta-\alpha/2)r}
    \frac{2^{-r}}{
      \left( 1+u_{1}(s) \right)^{r/2}
      \left( 1 + u_1(t) \right)^{r/2}
          }
    \left(
     2 a_{\alpha}(t-s) + u_2(s,t)
    \right)^r
    \\ &=
    \left(
      \frac st
    \right)^{(\beta-\alpha/2)r}
      \left( 1+u_{1}(s) \right)^{-r/2}
      \left( 1 + u_1(t) \right)^{-r/2}
         \\ & \qquad \qquad \times
    \left(
         a_{\alpha}(t-s)^r
         +
         \sum_{r'=1}^{r} \binom{r}{r'}  a_{\alpha}(t-s)^{r-r'}  2^{-r'} u_2(s,t)^{r'}
    \right).
  \end{align*}
Therefore, we have that
 \begin{equation*}
   R_{\alpha, r}(s,t)
   =
   \Phi^r -   a^r_{\alpha}(s-t) = \sum_{l=1}^3 R_{\alpha, r,l}(s,t),
 \end{equation*}
 where
 \begin{align*}
   R_{\alpha, r,1}(s,t) &=
                   \left( \frac s t  \right)^{(\beta-\alpha/2)r}
                      \left(     
      \left( 1+u_1(s) \right)^{-r/2} 
      \left( 1 + u_2(t) \right)^{-r/2} -1     \right)
\\ & \qquad 
     \times  \left ( a_{\alpha}(t-s) + 2^{-1} u_2(s,t)  \right)^r,
   \\
   R_{\alpha,r,2}(s,t)
                    &=
                      \left(
                            \left(\frac st  \right)^{(\beta-\alpha/2)r} - 1 \right)
                      a_{\alpha}(t-s)^r
  \\
   R_{\alpha,r,3}(s,t)
                    &=
                        \left(
      \frac st
    \right)^{(\beta-\alpha/2)r}
         \sum_{r'=1}^{r} \binom{r}{r'}  a_{\alpha}(t-s)^{r-r'}  2^{-r'} u_2(s,t)^{r'}.
 \end{align*}
 We will
now show that
\begin{equation}
  \label{eq:32}
    \lim_{\e \to  0}
    \varphi_{\alpha,q}(\e)
    \int_{0}^{T/\e} \int_{0}^{t}
    \mathbf{1}_{D_{M_1,M_2,\delta}}(s,t)
    \abs{R_{\alpha,r,l}(s,t)}^{q/r}
   dsdt  
    =
    0
  \end{equation}
  for $l=1,2,3$.
This then implies~\eqref{eq:20}
as
\begin{equation*}
  \abs{R_{\alpha,r}(s,t)}^{q/r} \le 3 \max_{l=1,2,3} \abs{R_{\alpha,r,l}(s,t)}^{q/r}
  \le 3 \sum_{l=1}^3 \abs{R_{\alpha,r,l}(s,t)}^{q/r}.
\end{equation*}

If not otherwise specified, all formulas proved throughout the rest of this
section are only claimed to be valid for $(s,t) \in D_{M_1, M_2,\delta}$. Furthermore, $C$
in the following denotes a generic positive constant which may change from line
to line and might depend on $\delta$. Dependence on variables is indicated as parameters.

Let us begin by treating $R_{\alpha,r,1}$.  By Lemma    \ref{lem:2}, we know that for some positive constant $C_{\delta}$ only depending on $\delta$, it holds that
  \begin{equation}
    \label{eq:12}
  \abs{u_1(s)}< C_{\varepsilon} s^{-\delta_1}   \qquad \text{and} \qquad \abs{u_1(t)} < C_{\varepsilon} t^{-\delta_1} 
\end{equation}
with $\delta_1=1- \alpha$ if $\alpha<1$, $\delta_1=2-\alpha$ if $\alpha \ge 1$.
The bound~\eqref{eq:12} and 
the Mean Value Theorem imply that
\begin{equation}
  \label{eq:22}
     \abs{
                       \left( 1+u_{1}(s) \right)^{-r/2}
      \left( 1 + u_1(t) \right)^{-r/2}
    -1}
    \leq C
    \left( 
      \abs{u_1(s)} + \abs{u_1(t)}
    \right)
    \leq
         C
         s^{-\delta_1}.
    \end{equation}
    Furthermore, taking into account the bounds~\eqref{eq:61}, ~\eqref{eq:19} and~\eqref{eq:19a},  we can write
        \begin{align}  \notag
      \label{eq:65}
      \abs{
        a_{\alpha}(t-s) + 2^{-1} u_2(s,t)
      }^r
&\le C \Big(  (t-s)^{(\alpha -2)r}+ ( s^{(\alpha -2)r} + s^{-r} ( t-s)^{(\alpha -1)r }) \mathbf{1}_{\{ t-s \ge 1\}} \\
& \qquad
+ \left( s^{(\alpha-1)r } \mathbf{1}_{\{\alpha <1\}} +  s^{(\alpha-2)r } \mathbf{1}_{\{\alpha \ge 1\}} \right) \mathbf{1}_{\{ t-s < 1\}} \Big).
      \end{align}
        The bounds~\eqref{eq:22} and~\eqref{eq:65} thus yield
    \begin{align*}
       \abs{R_{\alpha,r,1}(s,t)}^{q/r} & \le C
       \Big(  (t-s)^{(\alpha -2)q- \frac q r  \delta_1} \\
       &\qquad + ( s^{(\alpha -2)q- \frac q r  \delta_1}  
       + s^{-q- \frac q r  \delta_1} ( t-s)^{(\alpha -1)q- \frac q r  \delta_1 } \mathbf{1}_{\{ t-s \ge 1\}} \\
& \qquad
+ \left( s^{(\alpha-1)q- \frac q r  \delta_1 } \mathbf{1}_{\{\alpha <1\}} +  s^{(\alpha-2)q- \frac q r  \delta_1 } \mathbf{1}_{\{\alpha \ge 1\}} \right) \mathbf{1}_{\{ t-s < 1\}} \Big)
      \end{align*}
        and therefore, after a straightforward calculation,
     \begin{equation*}
       \int_{0}^{T/\e} \int_{0}^{t}
       \mathbf{1}_{D_{M_1, M_2, \delta}}(s,t)
      \abs{
      R_{\alpha,r,1}(s,t)
      }^{q/r}
  dsdt    
       \le
       C \e^{-(\alpha-2)q + \frac{q}{r} \delta_1 - 2}.
    \end{equation*}
    As $(\alpha-2)q- \frac{q}{r} \delta_1 + 2 \le 1 - \delta_1$, with equality only if $\alpha=2-1/q$
    and $1-\delta_1 \in (0,1]$, this shows~\eqref{eq:32} for $l=1$.
    
    Let us turn to $R_{\alpha,r,2}$. 
    We can write
         \begin{align*}
     &  \int_{0}^{T/\e} \int_{0}^{t}
       \mathbf{1}_{D_{M_1, M_2, \delta}}(s,t)
      \abs{
      R_{\alpha,r,2}(s,t)
      }^{q/r}
  dsdt    \\ 
     & \qquad   \le C\int_{0}^{T/\e} \int_{0}^{t}
       \mathbf{1}_{D_{M_1, M_2, \delta}}(s,t)
      \left(
                        1-    \left(\frac st  \right)^{(\beta-\alpha/2)r}  \right)^{q/r}
                    (t-s)^{(\alpha -2)q}   dsdt.
       \end{align*}
Making the change of variables $s=x/\e $ and $t=y/\e$,  the  integral in the right-hand side of the above inequality can be written as 
\[
 \e^{-(\alpha -2)q -2}\int_0^{T} \int_0^y   \mathbf{1}_{\{\frac \delta  \e \le x<y, \frac \delta  \e \le y-x \le M_1 x +\frac {M_2} \delta \}}\left( 1- \left( \frac xy \right) ^{(\beta-\alpha/2)r}  \right)^{q/r} (y-x)^{(\alpha -2)q}  dxdy.
\]
We claim that
\[
\int_0^T \int_0^y    \left( 1- \left( \frac xy \right) ^{(\beta-\alpha/2)r}  \right)^{q/r} (y-x)^{(\alpha -2)q}  dxdy <\infty.
\]
Indeed, with the change of variables $x=zy$, we obtain
\begin{align*}
&    \int_0^{T} \int_0^y    \left( 1- \left( \frac xy \right) ^{(\beta-\alpha/2)r}  \right)^{q/r} (y-x)^{(\alpha -2)q}  dxdy \\
 &\qquad    =\int_0^{t_1} \int_0^1    \left( 1- z ^{(\beta-\alpha/2)r}  \right)^{q/r} (1-z)^{(\alpha -2)q}  y^{(\alpha-2)q+1}dxdy <\infty.
    \end{align*}
    Finally,  this shows~\eqref{eq:32} for $l=2$ as  $\e^{-(\alpha-1)q-2}/\varphi_{\alpha,q}(n)$ converges
to zero as $n \to \infty$.

It remains to study $R_{\alpha,q,3}$. In this case, using~\eqref{eq:61} and 
the bounds  ~\eqref{eq:19} and~\eqref{eq:19a} for $u_2$, we get that~
\begin{align*}
  \abs{a_{\alpha}(s-t)^{r-s} u_2(s,t)^s}^{q/r}
  &\leq C
  (t-s)^{(\alpha-2) q(r-s)/r}  \\
  &\qquad \times \Big(
   ( s^{(\alpha -2)qs/r} + s^{-qs/r} ( t-s)^{(\alpha -1)qs/r }) \mathbf{1}_{\{ t-s \ge 1\}} \\
& \qquad
+ \left( s^{(\alpha-1)qs/r } \mathbf{1}_{\{\alpha <1\}} +  s^{(\alpha-2)qs/r } \mathbf{1}_{\{\alpha \ge 1\}} \right) \mathbf{1}_{\{ t-s < 1\}}  \Big).
 \end{align*}
 Therefore,  $  \abs{R_{\alpha,r,3}(s,t)}^{q/r}$ is also bounded by  the above quantity,  after a tedious but straightforward calculation, leads to
\begin{equation*}
  \int_{0}^{T/\e} \int_0^t \mathbf{1}_{D_{M_1,M_2,\varepsilon}}(s,t) \abs{R_{\alpha,r,3}(s,t)}^{q/r} dsdt
   \leq
   C
   \left( 
    | \log  \e|
      \e^{(2-\alpha)qs/r -1}
    \lor 
    \e^{(2-\alpha)q-2}
     \right).
  \end{equation*}
Noting that $(\alpha-2)qr'/r+1 \leq 1-r'/r < 1$, we obtain~\eqref{eq:32} for $l=3$, finishing the proof.
\end{proof}

\begin{lemma}
  \label{lem:4}
  In the setting introduced above, let $s,t \ge 0 $ and  let $q$ be a positive integer. Assume (H.1) and (H.2). Then, for $\alpha < 2 - \frac{1}{q}$, it holds that
  \begin{equation}
    \label{eq:5}
    \lim_{\e \to 0} \Ex\big[F_{q,\e}(s)F_{q,\e}(t)\big] = \sigma^2_{\alpha,q} (s\wedge t),
  \end{equation}
  and, for $\alpha=2-\frac{1}{q}$, it holds that
  \begin{equation}
    \label{eq:29}
    \lim_{\e \to 0 } \frac{1}{|\log \e|}  \Ex\big[F_{q,\e}(s)F_{q,\e}(t)\big] = \sigma^2_{2-1/q}(s\wedge t),
  \end{equation}
  where $\sigma^2_{\alpha,q}$  is given by 
  \begin{equation}
  \label{eq:23}
  \sigma^2_{\alpha,q} = c_q^2 q! \int_{\R} a_\alpha ^q (h)dh.
  \end{equation}
  if $\alpha<2-1/q$, and by  \eqref{eq:69}, if $\alpha=2-1/q$.
 \end{lemma}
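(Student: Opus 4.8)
The plan is to mirror, for a single pair $(s,t)$, the computation that underlies condition (d) of Theorem~\ref{thm:4}, but to push the estimates far enough to extract the \emph{exact} limiting constant. Assume without loss of generality that $s\le t$, so $s\wedge t=s$. From the chaotic expansion (\ref{eq:3}) and the isometry property of multiple integrals,
\begin{equation*}
  \Ex\big[F_{q,\e}(s)F_{q,\e}(t)\big]
  = q!\,\langle g^\e_{s,q},g^\e_{t,q}\rangle_{\HH^{\otimes q}}
  = c_q^2\,q!\,\e\int_0^{s/\e}\!\!\int_0^{t/\e}\xi_u^{-q}\xi_v^{-q}\langle\partial_u,\partial_v\rangle_\HH^q\,du\,dv,
\end{equation*}
and since $\langle\partial_u,\partial_v\rangle_\HH=\xi_u\xi_v\,\Phi(u,v)$ by (\ref{Phi}) this equals $c_q^2 q!\,\e\int_0^{s/\e}\!\int_0^{t/\e}\Phi(u,v)^q\,du\,dv$. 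Because $\varphi_{\alpha,q}(\e)$ equals $\e$ when $\alpha<2-\tfrac1q$ and $\e/|\log\e|$ when $\alpha=2-\tfrac1q$, both (\ref{eq:5}) and (\ref{eq:29}) reduce to identifying $\lim_{\e\to0}c_q^2 q!\,\varphi_{\alpha,q}(\e)\int_0^{s/\e}\!\int_0^{t/\e}\Phi(u,v)^q\,du\,dv$.

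Next I would discard the regions that do not contribute. On $D_2=\{\,|u-v|\ge(c-1)(u\wedge v)+c\,\}$, Lemma~\ref{lem:10} gives $\varphi_{\alpha,q}(\e)\int\!\!\int\mathbf 1_{D_2}|\Phi|^q\to0$. On the complement $D_{c-1,c}$ I would write $\Phi(u,v)^q=a_\alpha(u-v)^q+R_{\alpha,q}(u,v)$ and invoke Lemma~\ref{lem:11} with $r=q$, $M_1=c-1$, $M_2=c$, to get $\varphi_{\alpha,q}(\e)\int\!\!\int\mathbf 1_{D_{c-1,c}}|R_{\alpha,q}|\to0$; the part of $a_\alpha(u-v)^q$ that sits on $D_2$ is killed using the bound (\ref{eq:61}) and a computation identical to the one in the proof of Lemma~\ref{lem:10}. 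Hence the problem is reduced to evaluating $\lim_{\e\to0}\varphi_{\alpha,q}(\e)\int_0^{s/\e}\!\int_0^{t/\e}a_\alpha(u-v)^q\,du\,dv$.

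For this last limit I would substitute $h=v-u$ in the inner integral, writing $\int_0^{t/\e}a_\alpha(u-v)^q\,dv=\int_{-u}^{\,t/\e-u}a_\alpha(h)^q\,dh$, and then integrate over $u\in[0,s/\e]$. When $\alpha<2-\tfrac1q$ one has $(\alpha-2)q<-1$, so by (\ref{eq:56}) the function $a_\alpha(h)^q$ is integrable on $\R$; dominated convergence shows the inner integral tends to $\int_\R a_\alpha(h)^q\,dh$ as $u\to\infty$, and a Ces\`aro-type argument yields $\e\int_0^{s/\e}(\cdots)\,du\to(s\wedge t)\int_\R a_\alpha(h)^q\,dh$, i.e.\ (\ref{eq:23}) after multiplying by $c_q^2 q!$. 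When $\alpha=2-\tfrac1q$ one has $(\alpha-2)q=-1$, so (\ref{eq:56}) gives $a_\alpha(h)^q=\big(\tfrac12\alpha(\alpha-1)\big)^q|h|^{-1}+O(|h|^{-3})$, whence $\int_{-u}^{t/\e-u}a_\alpha(h)^q\,dh=\big(\tfrac12\alpha(\alpha-1)\big)^q\big(\log u+\log(\tfrac t\e-u)\big)+O(1)$; integrating in $u$, multiplying by $\e/|\log\e|$ and letting $\e\to0$ isolates the coefficient of the logarithm and produces the constant $\sigma^2_{1-2/d}$ of (\ref{eq:69}) times $s\wedge t$. The conceptually hard work is all packed into Lemmas~\ref{lem:2}--\ref{lem:3}, \ref{lem:10} and \ref{lem:11}, which is exactly where the lack of stationary increments of $X$ is absorbed (through the correction terms $u_1,u_2$ and the factor $(u\wedge v/u\vee v)^{(\beta-\alpha/2)q}$); once those are available, the only genuinely delicate point left in Lemma~\ref{lem:4} itself is the critical case $\alpha=2-\tfrac1q$, where one must track the logarithmically divergent contribution precisely enough to pin down the exact constant in (\ref{eq:69}) rather than merely a finite bound.
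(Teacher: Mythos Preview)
Your approach matches the paper's: express the covariance as $c_q^2\,q!\,\varphi_{\alpha,q}(\e)\int_0^{s/\e}\!\int_0^{t/\e}\Phi^q\,du\,dv$, invoke Lemmas~\ref{lem:10} and~\ref{lem:11} to replace $\Phi^q$ by $a_\alpha^q$ on the diagonal region $D=\{|u-v|\le (c-1)(u\wedge v)+c\}$, and then evaluate the remaining fBm-type integral. The paper simply defers that last step to known results for the fractional Brownian motion, whereas you sketch the direct computation via the change of variable $h=v-u$ and the asymptotics~\eqref{eq:56}; the structure and the ingredients are otherwise identical.
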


 \begin{proof}
   Recall the definition~\eqref{help} of the helper function $\varphi_{\alpha,q}$.  We have that
  \begin{align}
    \notag
     \Ex\big[F_{q,\e}(s)F_{q,\e} (t)\big]
     &=   c_q^2   \varphi_{\alpha,q}(\e) \int_{0}^{s/\e} \int_{0}^{t/\e}
       \Ex\big[H_q(Y_1(u)) H_q(Y_1(v))\big] dsdt
     \\ &= \label{eq:67}
         c_q^2  q! \varphi_{\alpha,q}(\e)
          \int_0^{s/\e} \int_0^{t/\e}
          \Phi^q(s,t)
          dsdt.
  \end{align}
  By Lemmas~\ref{lem:10} and~\ref{lem:11}, we get that
  \begin{equation*}
    \lim_{\e \to 0}            \varphi_{\alpha,q}(\e)
          \int_0^{s/\e} \int_0^{t/\e}
          \Phi^q(s,t)
          dsdt
          =
    \lim_{\e \to 0}            \varphi_{\alpha,q}(\e)
    \int_0^{s/\e} \int_0^{t/\e}
    \mathbf{1}_{D}(s,t)
           a_{\alpha}(s-t)^q
          dsdt,
\end{equation*}
   where $a_{\alpha}$ is  defined in~\eqref{eq:6} and
   \begin{equation*}
     D = \left\{ (s,t) \in \mathbb{R}^2 : \abs{s-t} \le (c-1) (s \land t ) + c \right\}.
   \end{equation*}
 Then  the  convergences  follow from  the results for the fBm.
\end{proof}

\subsection{Lemmas for the case $\alpha > 2 - 1/d$}
\label{s-11}

\begin{lemma}
  \label{lem:7}
  Assume $\alpha > 2- 1/d$ and (H.2). Then, one has for $s,t > 0$ that
  \begin{equation}
    \label{eq:41}
    \abs{\partial_{s,t} \Ex\big[X(s) X(t)\big]}
    \leq
       C
   (s \land t)^{2\beta-\alpha} (s \lor t)^{\alpha-2},
 \end{equation}
 where $C$ is a positive constant only depending on $\alpha$ and $\beta$. In particular,
 for $u,v > 0$ it holds that
 \begin{equation}
   \label{eq:42}
    \lim_{\e, \delta \to 0} 
    \frac 1{ \e \delta}
       \Ex\big[
       \left( X(s + \e) - X(s) \right)
       \left( X(t + \delta) - X(t) \right) 
     \big]
     =
     \partial_{s,t} \Ex\big[X(s) X(t)\big].
   \end{equation}
\end{lemma}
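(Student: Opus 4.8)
The plan is to express the mixed derivative through the self-similarity relation~\eqref{eq:27}, which reduces~\eqref{eq:41} to a one-variable estimate on $\phi'$ and $\phi''$, and then to deduce~\eqref{eq:42} from~\eqref{eq:41} by a standard dominated-convergence argument. Assume without loss of generality that $0<s\le t$ and set $x=t/s\ge 1$. By~\eqref{eq:27} we have $\Ex[X(s)X(t)]=s^{2\beta}\phi(x)$, and differentiating first in $t$ and then in $s$ (keeping both variables in the region $s<t$, where $\phi$ is $C^2$ by (H.1)) yields
\begin{equation*}
  \partial_{s,t}\Ex[X(s)X(t)] = s^{2\beta-2}\bigl((2\beta-1)\phi'(x) - x\,\phi''(x)\bigr).
\end{equation*}
So it is enough to bound $\bigl|(2\beta-1)\phi'(x)-x\phi''(x)\bigr|$ by $C\,x^{\alpha-2}$ when $x$ stays away from $1$ and by $C\,(x-1)^{\alpha-2}$ when $x$ is close to $1$: substituting $x=t/s$ and $x-1=(t-s)/s$ turns the prefactor $s^{2\beta-2}$ into $(s\wedge t)^{2\beta-\alpha}(s\vee t)^{\alpha-2}$ in the first case and into $(s\wedge t)^{2\beta-\alpha}|s-t|^{\alpha-2}$ in the second, which is exactly~\eqref{eq:41} (the diagonal contribution being the dominant one only when $|s-t|$ is at most of order $s\wedge t$).

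For the one-variable estimate I would split the range of $x$ at the constant $c$ of (H.2). Since $\alpha>2-\frac1d\ge1$, we are in the case $\alpha\ge1$ of (H.2), so for $x\ge c$ its parts (d) and (e) give $|\phi'(x)|\le Cx^{\alpha-2}$ and $|\phi''(x)|\le Cx^{\alpha-3}$, whence $\bigl|(2\beta-1)\phi'(x)-x\phi''(x)\bigr|\le Cx^{\alpha-2}+Cx\cdot x^{\alpha-3}=Cx^{\alpha-2}$. For $1<x<c$ I would use (H.1): writing $\phi'(x)=-\lambda\alpha(x-1)^{\alpha-1}+\psi'(x)$ and $\phi''(x)=-\lambda\alpha(\alpha-1)(x-1)^{\alpha-2}+\psi''(x)$, the bounds $|\psi'(x)|\le Cx^{\alpha-1}$ and $|\psi''(x)|\le Cx^{-1}(x-1)^{\alpha-1}$ together with $\alpha>1$ show that $\phi'$ is bounded on $[1,c]$ and that $|\phi''(x)|\le C(x-1)^{\alpha-2}$ there, so $\bigl|(2\beta-1)\phi'(x)-x\phi''(x)\bigr|\le C\bigl(1+(x-1)^{\alpha-2}\bigr)\le C(x-1)^{\alpha-2}$. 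Assembling the two ranges and going back through $x=t/s$ gives~\eqref{eq:41}.

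To obtain~\eqref{eq:42}, fix $s\ne t$ and take $\e,\delta$ small enough that $[s,s+\e]$ and $[t,t+\delta]$ are disjoint. As $(a,b)\mapsto\Ex[X(a)X(b)]$ is continuously differentiable in each variable off the diagonal, the fundamental theorem of calculus gives
\begin{equation*}
  \frac1{\e\delta}\,\Ex\bigl[(X(s+\e)-X(s))(X(t+\delta)-X(t))\bigr]
  = \frac1{\e\delta}\int_s^{s+\e}\int_t^{t+\delta}\partial_{a,b}\Ex[X(a)X(b)]\,db\,da .
\end{equation*}
The integrand is continuous at $(s,t)$ and, by~\eqref{eq:41}, is bounded on a fixed neighbourhood of $(s,t)$ uniformly in small $\e,\delta$; hence the average on the right converges to $\partial_{s,t}\Ex[X(s)X(t)]$ as $\e,\delta\to0$.

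The one delicate point is the behaviour near $x=1$: one must read off from (H.1) the exact order $(x-1)^{\alpha-2}$ of the singularity of $\phi''$, and it is precisely this singularity --- appearing as the $|s-t|^{\alpha-2}$ term --- that makes $\alpha>2-\frac1d$, i.e. $d(\alpha-2)>-1$, the right condition for the double integral~\eqref{eq:70a} defining $K_d$ to be finite; the rest is bookkeeping with the Mean Value Theorem.
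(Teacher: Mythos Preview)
Your approach mirrors the paper's closely: both compute the mixed derivative via self-similarity as $s^{2\beta-2}\bigl[(2\beta-1)\phi'(x)-x\phi''(x)\bigr]$ with $x=t/s$, split at the constant $c$ from (H.2), and handle $1<x<c$ via the structure of $\phi$ coming from (H.1). The estimate for $x\ge c$ is correct and identical to the paper's, and your derivation of~\eqref{eq:42} by continuity at a fixed off-diagonal point is equivalent to the paper's integral-representation argument.

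There is, however, a gap in the step where you pass from the one-variable bound near $x=1$ to~\eqref{eq:41}. You correctly obtain $\bigl|(2\beta-1)\phi'(x)-x\phi''(x)\bigr|\le C(x-1)^{\alpha-2}$ there, which translates into $(s\wedge t)^{2\beta-\alpha}|s-t|^{\alpha-2}$, and then assert that this ``is exactly~\eqref{eq:41}''. It is not: since $|s-t|\le s\vee t$ and $\alpha-2<0$, one has $|s-t|^{\alpha-2}\ge(s\vee t)^{\alpha-2}$, so your bound is strictly \emph{weaker} than the one stated in~\eqref{eq:41}. In fact~\eqref{eq:41} as written cannot hold near the diagonal: already for fractional Brownian motion ($\alpha=2\beta=2H$) one has $\partial_{s,t}\Ex[X(s)X(t)]=H(2H-1)|s-t|^{2H-2}$, which blows up as $s\to t$, whereas the right-hand side of~\eqref{eq:41} stays bounded. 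The paper's own proof has the same lacuna --- it records $|\phi''(t/s)|\le C(t/s-1)^{\alpha-2}$ and then simply writes~\eqref{eq:41}. The honest inequality is
\[
\bigl|\partial_{s,t}\Ex[X(s)X(t)]\bigr|\le C\,(s\wedge t)^{2\beta-\alpha}\,|s-t|^{\alpha-2},
\]
which coincides with~\eqref{eq:41} when $t/s\ge c$ (since then $|s-t|\asymp s\vee t$) and is the correct replacement when $1<t/s<c$. This weaker bound is all that is actually needed downstream: the diagonal singularity $|s-t|^{(\alpha-2)d}$ remains integrable precisely because $\alpha>2-\tfrac1d$, so Lemma~\ref{lem:8} and the proof of Theorem~\ref{thm1.3} go through unchanged with this correction.
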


\begin{proof}
  For $0 < s \le t$ we have by self-similarity that
  \begin{equation*}
    \Ex\big[X(s) X(t)\big] = s^{2\beta} \phi \left( \frac{t}{s} \right)
  \end{equation*}
 with $\phi(x) = \Ex\big[X(1) X(x)\big]$. A routine calculation yields that
 \begin{equation*}
   \partial_{s,t} \Ex\big[X(s) X(t)\big] =
   \left( 2\beta-1 \right) s^{2\beta-2} \phi' \left( \frac{t}{s} \right)
   -
   s^{2\beta-3} t \phi'' \left( \frac{t}{s} \right).
 \end{equation*}
 Using (H.2) if $t/s \ge c$ and   the fact that $\phi'(t/s)$ is bounded and 
  $|\phi''(t/s) | \le C (t/s -1) ^{\alpha -2} $   if $t/s \le c$,  we obtain   
 \begin{equation}
   \label{eq:39}
   \abs{ \partial_{s,t} \Ex\big[X(s) X(t)\big]}
   \le
   C
   s^{2\beta-\alpha} t^{\alpha-2},
 \end{equation}
 which proves the asserted bound. As by assumption $\alpha-2 > -1/d \geq -1$ and by
 definition $2\beta-\alpha > 0$, the derivative
 is therefore integrable on any interval $[0,a] \times [0,b]$ and we get that
 \begin{equation*}
\Ex\big[
       \left( X(s + \e) - X(s) \right)
       \left( X(t + \delta) - X(t) \right) 
   \big]
   =
   \int_t^{t+\delta} \int_s^{s+\e} \partial_{u,v} \Ex\big[X(u) X(v)\big] du dv,
 \end{equation*}
 so that~\eqref{eq:42} follows.  
\end{proof}

\begin{lemma}
  \label{lem:8}
  Assume $\alpha > 2- 1/d$ and (H.2) and recall that $\Phi_{\e,\delta} (s,t)$ has been introduced in (\ref{phi2}).
  Then it holds that
  \begin{equation}
    \label{eq:48}
    \lim_{\e,\delta \to  0} \left( \e \delta \right)^{\alpha/2-1} \Phi_{\e,\delta}(s,t)
    =
             \frac{
         \partial_{s,t}
         \Ex\big[X(s) X(t)\big]
         }{
         2\lambda \left( st \right)^{\beta-\alpha/2}
         }.
\end{equation}
  Furthermore, there exists a positive constant $C_T$ such that for  any  $s,t \in [0,T]$ such that  $s \ge \e $ and $t\ge \delta$,
  it holds that
  \begin{equation}
    \label{eq:50}
    \abs{\Phi_{\e, \delta }(s,t)} \leq
    C_T     \left( \e\delta \right)^{1-  \frac \alpha 2 }
    \left( s \lor t  \right)^{\alpha-2} (st) ^{-\beta +\frac \alpha 2} .
  \end{equation}
\end{lemma}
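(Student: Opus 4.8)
The plan is to derive an explicit formula for $\Phi_{\e,\delta}(s,t)$ by rescaling its denominator via self-similarity, and then read off both assertions from it. Writing $\Delta_\e X(s) = X(s+\e)-X(s)$, self-similarity of order $\beta$ gives $(X(s+\e),X(s)) = (X(\e(\tfrac s\e+1)),X(\e\cdot\tfrac s\e)) \overset{\rm law}{=} \e^\beta(X(\tfrac s\e+1),X(\tfrac s\e))$, so that $\norm{\Delta_\e X(s)}_{L^2(\Omega)}^2 = \e^{2\beta}\Ex[(X(\tfrac s\e+1)-X(\tfrac s\e))^2]$, and Lemma~\ref{lem:2} (applicable since $s/\e>0$) turns this into
\[
\norm{\Delta_\e X(s)}_{L^2(\Omega)}^2 = 2\lambda\,\e^{\alpha}s^{2\beta-\alpha}\bigl(1+u_1(s/\e)\bigr),
\]
and similarly for $\Delta_\delta X(t)$. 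Substituting into the definition~\eqref{phi2} of $\Phi_{\e,\delta}$ yields the identity
\[
(\e\delta)^{\alpha/2-1}\,\Phi_{\e,\delta}(s,t) = \frac{(\e\delta)^{-1}\,\Ex[\Delta_\e X(s)\,\Delta_\delta X(t)]}{2\lambda\,(st)^{\beta-\alpha/2}\sqrt{\bigl(1+u_1(s/\e)\bigr)\bigl(1+u_1(t/\delta)\bigr)}},
\]
which is the backbone of the proof.

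For~\eqref{eq:48} I would simply pass to the limit in the above identity for fixed $s,t>0$: as $\e,\delta\to0$ one has $s/\e,t/\delta\to\infty$, hence $u_1(s/\e),u_1(t/\delta)\to0$ by the bound~\eqref{eq:14}, so the square root tends to $1$; meanwhile the numerator converges to $\partial_{s,t}\Ex[X(s)X(t)]$ by~\eqref{eq:42} of Lemma~\ref{lem:7}. This gives exactly the right-hand side of~\eqref{eq:48}.

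For~\eqref{eq:50} I would bound numerator and denominator separately. Using the integral representation $\Ex[\Delta_\e X(s)\,\Delta_\delta X(t)] = \int_t^{t+\delta}\int_s^{s+\e}\partial_{u,v}\Ex[X(u)X(v)]\,du\,dv$ (legitimate since the mixed derivative is locally integrable, as established in the proof of Lemma~\ref{lem:7}, and the region $[s,s+\e]\times[t,t+\delta]$ stays away from the axes thanks to $s\ge\e$, $t\ge\delta$) together with the pointwise bound~\eqref{eq:41}, and the elementary observations that on the domain of integration $u\vee v\ge s\vee t$ (so $(u\vee v)^{\alpha-2}\le(s\vee t)^{\alpha-2}$ because $\alpha<2$) and $u\wedge v\le 2T$ (so $(u\wedge v)^{2\beta-\alpha}\le(2T)^{2\beta-\alpha}$ because $2\beta\ge\alpha$), one obtains $\abs{\Ex[\Delta_\e X(s)\,\Delta_\delta X(t)]}\le C_T\,\e\delta\,(s\vee t)^{\alpha-2}$. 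For the denominator, since $s\ge\e$ and $t\ge\delta$ we have $s/\e,t/\delta\ge1$, and $\inf_{\sigma\ge1}(1+u_1(\sigma))=:b>0$: indeed $1+u_1(\sigma) = \norm{X(\sigma+1)-X(\sigma)}_{L^2(\Omega)}^2/(2\lambda\sigma^{2\beta-\alpha})$ is a strictly positive (by non-degeneracy of $X$, a consequence of self-similarity, exactly as in the proof of Lemma~\ref{lem:10}) continuous function of $\sigma\ge0$ tending to $1$ as $\sigma\to\infty$ by~\eqref{eq:14}. Hence the denominator is at least $2\lambda b\,(\e\delta)^{\alpha/2}(st)^{\beta-\alpha/2}$, and dividing the two estimates produces $\abs{\Phi_{\e,\delta}(s,t)}\le \tfrac{C_T}{2\lambda b}(\e\delta)^{1-\alpha/2}(s\vee t)^{\alpha-2}(st)^{-\beta+\alpha/2}$, which is~\eqref{eq:50}.

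I do not expect a genuine obstacle here: the lemma is essentially an application of the self-similarity rescaling combined with Lemma~\ref{lem:2} and the two estimates of Lemma~\ref{lem:7}. The only steps needing a little care are the uniform positive lower bound on $1+u_1$ over $[1,\infty)$ — which is what prevents the rescaled denominator from degenerating and is the reason the hypotheses $s\ge\e$, $t\ge\delta$ appear in~\eqref{eq:50} — and checking that all the powers line up so that the prefactors $(\e\delta)^{\alpha/2}$ cancel correctly; both are routine given $\alpha\le 2\beta<2$.
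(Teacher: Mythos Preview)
Your proof is correct and follows essentially the same route as the paper: rescale the denominator via self-similarity and Lemma~\ref{lem:2}, then use Lemma~\ref{lem:7} (the integral representation and the bound~\eqref{eq:41}) for the numerator, combining these for both the limit~\eqref{eq:48} and the uniform bound~\eqref{eq:50}. Your write-up is in fact slightly more explicit than the paper's on two points the paper leaves implicit --- the positive lower bound $\inf_{\sigma\ge 1}(1+u_1(\sigma))>0$ and the absorption of $(u\wedge v)^{2\beta-\alpha}$ into the constant $C_T$ --- but these are exactly the details one would fill in when carrying out the paper's argument.
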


\begin{proof}
  By self-similarity and Lemma \ref{lem:2}, we have that
  \begin{align*}
    \Ex\left[
      \left(
        X(s+\e) - X(s)
      \right)^2
    \right]
    &=
      \e^{2\beta}
         \Ex\big[
      \left(
        X(s/\e +1)- X(s / \e )
      \right)^2
      \big]
    \\ &=
         2\lambda \e^{\alpha} s^{2\beta-\alpha} \left( 1 + u_1(s/\e) \right),
  \end{align*}
  where $u_1(s/\e) \leq C \left( s/\e \right)^{\alpha-2}$ converges to zero as $\e \to 0$.
  Therefore, also using Lemma~\ref{lem:7}, we have that
  \begin{align*}
    \lim_{\e, \delta \to  0} \left( \e\delta \right)^{\alpha/2-1} \Phi_{\e,\delta}(s,t)
    &=
         \lim_{\e,\delta \to 0}
          ( \e\delta)^{-1}
         \frac{
                  \Ex\big[
         \left(
         X(s+\e) - X(s)
         \right)
         \left(
                  X(t+\delta) - X(t)
         \right)
         \big]
         }{
         2\lambda \left( st \right)^{\beta-\alpha/2}
         }
    \\ &=
         \frac{
         \partial_{s,t}
         \Ex\big[X(s) X(t)\big]
         }{
         2\lambda \left( st \right)^{\beta-\alpha/2}
         }.
\end{align*}
In order to establish the bound ~\eqref{eq:50},  using Lemma \ref{lem:2} and the condition $s/\e \ge1$ and $t/\delta \ge 1$,
  we obtain
\[
 | \Phi_{\e,\delta} (s,t)| \le   C  (\e\delta)^{- \frac \alpha  2}  (st) ^{- \beta + \frac  \alpha 2 } \int_t  ^{t+\delta}  \int_s ^{s+\e}  \partial_{u,v}  \Ex\big[X(u) X(v)\big] dudv.
\]
 Finally, in view of the estimate  ~\eqref{eq:41}, we can write
 \[
 | \Phi_{\e,\delta} (s,t)| \le   C  (\e\delta)^{1- \frac \alpha  2}  (st) ^{- \beta + \frac  \alpha 2 }   (s\vee t) ^{\alpha -2}.
 \]
 This completes the proof of the lemma.
\end{proof}

\subsection{Chaos expansion of the absolute value}
\label{s-12}
The next statement has been used in the end of the introductory section, when we
applied Theorems \ref{thm:5}  and \ref{thm1.3} in the case where
$X=\widetilde{B}$ is a bifractional Brownian motion, and when for $f$ we choose
the function $f(x)=\abs{x} - \sqrt{\frac{2}{\pi}}$. The proof is well-known and
standard, we include it here for completeness.

\begin{proposition}[Chaos expansion of the absolute value]
\label{prop:1}
 It holds that
\begin{equation*}
  \abs{x}
  =
  \sqrt{\frac{2}{\pi}}
       + \sum_{q=1}^{\infty} \frac{1}{q! \, (2q-1)} \, H_{2q}(x),
  \qquad x \in 
  \mathbb{R}.
\end{equation*}
\end{proposition}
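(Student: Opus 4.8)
The plan is to identify the Hermite coefficients of $x\mapsto|x|$ by a direct computation. Since $x\mapsto|x|$ is even and square‑integrable for $\gamma$, it lies in $L^2(\R,\gamma)$ and therefore admits an $L^2(\R,\gamma)$‑convergent expansion $|x|=\sum_{n\ge 0}c_nH_n(x)$ with $c_n=\frac1{n!}\langle|x|,H_n\rangle_{L^2(\R,\gamma)}$, where $H_n$ is the $n$th Hermite polynomial, normalized so that $\langle H_m,H_n\rangle_{L^2(\R,\gamma)}=n!\,\delta_{mn}$. As $|x|$ is even while $H_n$ is an odd function for odd $n$, all odd coefficients vanish, so only $c_0$ and $c_{2q}$ ($q\ge1$) remain. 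The constant term is classical: $c_0=\int_\R|x|\,\gamma(dx)=\Ex\,|Z|=\sqrt{2/\pi}$, where $Z$ is a standard Gaussian random variable.

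For the even coefficients I would exploit the generating function identity $\sum_{n\ge0}\frac{t^n}{n!}H_n(x)=e^{tx-t^2/2}$. Multiplying $|x|=\sum_nc_nH_n(x)$ by $t^n/n!$, summing over $n$ and integrating against $\gamma$ --- the interchange of sum and integral being licit for every $t\in\R$ by Cauchy--Schwarz and $\sum_n|t|^n/\sqrt{n!}<\infty$ --- one obtains, after completing the square,
\[
\sum_{n\ge0}c_n\,t^n \;=\; \int_\R|x|\,e^{tx-t^2/2}\,\gamma(dx)\;=\;\Ex\,|Z+t|\;=:\;g(t).
\]
Differentiating under the expectation (legitimate since $\big||Z+t+h|-|Z+t|\big|\le|h|$) gives $g'(t)=\Ex[\sgn(Z+t)]=2\,\mathbb{P}(Z>-t)-1$, hence $g''(t)=\sqrt{2/\pi}\,e^{-t^2/2}$, i.e. twice the standard Gaussian density. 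Substituting $e^{-t^2/2}=\sum_{k\ge0}\frac{(-1)^k}{2^kk!}\,t^{2k}$ and integrating twice in $t$, with the two constants of integration fixed by $g'(0)=0$ and $g(0)=\sqrt{2/\pi}$, produces the Taylor series $\sum_{q\ge0}c_{2q}t^{2q}$ of $g$ in closed form; matching powers of $t$ and reindexing $k\mapsto q=k+1$ then identifies $c_{2q}$ explicitly and yields the asserted identity.

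A second, equally short route computes each coefficient separately. By Rodrigues' formula $H_n(x)e^{-x^2/2}=(-1)^n\frac{d^n}{dx^n}e^{-x^2/2}$ and the evenness of $|x|$,
\[
\langle|x|,H_{2q}\rangle_{L^2(\R,\gamma)}=\frac{2}{\sqrt{2\pi}}\int_0^\infty x\,\Big(e^{-x^2/2}\Big)^{(2q)}dx ;
\]
integrating by parts twice, the boundary contributions vanish and the integral collapses to $\big(e^{-x^2/2}\big)^{(2q-2)}\big|_{x=0}=H_{2q-2}(0)$. Inserting the classical value $H_{2m}(0)=(-1)^m(2m-1)!!$ and simplifying the resulting factorials finishes the computation.

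I do not expect a genuine obstacle here: the $L^2(\R,\gamma)$‑convergence of the expansion is automatic, and the rest is a finite, elementary calculation. The only thing that needs care is the bookkeeping of the normalizing constants --- the $\sqrt{2/\pi}$, the alternating sign, the powers of $2$, and the ordinary and double factorials --- together with the check that, after reindexing, the extracted series matches exactly the coefficient displayed in the statement.
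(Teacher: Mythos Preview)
Your proposal is sound. The second route you sketch---Rodrigues' formula followed by an integration by parts, reducing $\langle |x|,H_{2q}\rangle_{L^2(\R,\gamma)}$ to $\phi^{(2q-2)}(0)$ and hence to $H_{2q-2}(0)$---is exactly the argument the paper gives. Your primary route via the generating function $g(t)=\Ex\,|Z+t|$ and the identity $g''(t)=\sqrt{2/\pi}\,e^{-t^2/2}$ is a genuinely different computation: it delivers all the even coefficients at once as Taylor coefficients of a single known function, whereas the paper extracts each $c_{2q}$ individually. Both approaches are elementary and of comparable length; the generating-function route has the minor advantage that the alternating sign and the powers of $2$ emerge transparently from the expansion of $e^{-t^2/2}$, while in the Rodrigues route they are hidden inside the value $H_{2q-2}(0)$. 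Your caution about the bookkeeping of constants is well placed.
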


\begin{proof}
The absolute mean of a standard Gaussian is
$\sqrt{\frac{2}{\pi}}$. By symmetry and the fact that
\begin{equation*}
  H_q(x) = (-1)^q \phi(x)^{-1} \phi^{(q)}(x),
\end{equation*}
with $\phi(x)=\frac{1}{\sqrt{2\pi}}e^{-x^2/2}$ the Gaussian density,
we get $ \int_{\R}^{} \abs{u} H_{2q+1}(u) \phi(u) du=0$ and
\begin{align*}
  \int_{\R}^{} \abs{u} H_{2q}(u) \phi(u) du
  &=
    2 \int_0^{\infty} u H_{2q}(u) \phi(u) du
    =
    2  \int_0^{\infty} u  \phi^{(2q)}(u) du
  \\ &=
       - 2 \int_0^{\infty}  \phi^{(2q-1)}(u) du
  =
       2 \phi^{(2q-2)}(0)
  =
       2 \cdot (2q-3)!!.
\end{align*}
Furthermore, we have that $\int_{-\infty}^{\infty} H_{2q}(u)^2 \phi(u) du=(2q)!$. Therefore,
\begin{align*}
  \abs{x}
  & =
       \sum_{q=0}^{\infty} \frac{\int_{-\infty}^{\infty} \abs{u} H_{2q}(u) \phi(u) du}{\int_{-\infty}^{\infty} H_{2q}(u)^2 \phi(u) du} H_{2q}(x)
 =
       \sqrt{\frac{2}{\pi}}
       +
       \sum_{q=1}^{\infty} \frac{1}{q!\, (2q-1)} H_{2q}(x).
\end{align*}
\end{proof}

\end{document}